\documentclass[reqno]{amsart}
\usepackage[T1]{fontenc}
\usepackage[latin1]{inputenc}
\usepackage{amssymb}
\usepackage{amsfonts}
\usepackage{amsmath,mathtools}
\usepackage{mathrsfs}
\usepackage{graphicx}
\usepackage{color}
\usepackage{hyperref}
\usepackage{esint}
\usepackage{verbatim}
\usepackage{bbm}
\usepackage{tikz}
\setcounter{MaxMatrixCols}{10}
\usepackage{setspace}
\usepackage{stackrel}
\usepackage{soul}

\newcommand{\N}[1]{\mathbb{N}^{#1}}
\newcommand{\R}[1]{\mathbb{R}^{#1}}
\renewcommand{\S}[1]{\mathbb{S}^{#1}}

\newcommand{\cC}{\mathcal C}
\newcommand{\cD}{\mathcal D}
\newcommand{\cE}{\mathcal E}
\newcommand{\cF}{\mathcal F}

\newcommand{\cH}{\mathcal H}

\newcommand{\cL}{\mathcal L}
\newcommand{\cM}{\mathcal M}
\newcommand{\cO}{\mathcal O}

\newcommand{\cS}{\mathcal S}

\newcommand{\ED}{\mathcal{ED}}

\newcommand{\bulk}{\mathrm{bulk}}

\newcommand{\de}{\mathrm d}

\newcommand{\rec}{\mathrm{rec}}

\newcommand{\surface}{\mathrm{surf}}

\newcommand{\eps}{\varepsilon}

\newcommand{\tr}{\operatorname{tr}}

\renewcommand{\geq}{\geqslant}
\renewcommand{\leq}{\leqslant}

\newcommand{\longrightharpoonup}{\relbar\joinrel\rightharpoonup}
\newcommand{\wto}{\rightharpoonup}

\newcommand{\wsto}{\stackrel{*}{\rightharpoonup}}

\newcommand{\weakst}{\stackrel{*}{\longrightharpoonup}}
\newcommand{\weakstH}{\stackrel[H]{*}{\longrightharpoonup}}
\newcommand{\pweak}{\stackrel{p}{\longrightharpoonup}}

\newcommand{\average}{{\mathchoice {\kern1ex\vcenter{\hrule
height.4pt width 8pt depth0pt}
\kern-11pt} {\kern1ex\vcenter{\hrule height.4pt width 4.3pt
depth0pt} \kern-7pt} {} {} }}

\newcommand{\res}{\mathop{\hbox{\vrule height 7pt width .5pt depth
0pt\vrule height .5pt width 6pt depth0pt}}\nolimits}

\mathchardef\emptyset="001F

\providecommand{\U}[1]{\protect\rule{.1in}{.1in}}
\numberwithin{equation}{section}
\setlength{\textwidth}{6.5in}
\setlength{\textheight}{9in}
\setlength{\oddsidemargin}{0in}
\setlength{\evensidemargin}{0in}
\setlength{\topmargin}{-0.5in}

\newtheorem{definition}{Definition}[section]
\newtheorem{theorem}[definition]{Theorem}
\newtheorem{lemma}[definition]{Lemma}
\newtheorem{proposition}[definition]{Proposition}
\newtheorem{assumptions}[definition]{Assumptions}

\newtheorem{corollary}[definition]{Corollary}

\newtheorem{example}[definition]{Example}

\theoremstyle{definition} {\newtheorem{remark}[definition]{Remark}}

\makeindex


\title[Hierarchical Structured Deformations via the Global Method for Relaxation]{A Comprehensive Approach via Global Relaxation to the Variational Modelling of Hierarchical Structured Deformations}%
\author[A.~C.~Barroso]{Ana Cristina Barroso}
\address[A.~C.~Barroso]{Departamento de Matem\'atica and CEMS.UL, 
Faculdade de Ci\^encias da Universidade de Lisboa,
Campo Grande, Edif\' \i cio C6, Piso 1,
1749-016 Lisboa, Portugal}
\email{acbarroso@ciencias.ulisboa.pt}
\author[J.~Matias]{Jos\'{e} Matias}
\address[J.~Matias]{ Centro de An\'{a}lise Matem\'{a}tica Geometria e Sistemas Din\^{a}micos, Departamento de Matem\'atica, Instituto Superior T\'ecnico, Av.~Rovisco Pais, 1, 1049-001 Lisboa, Portugal}
\email{jose.c.matias@tecnico.ulisboa.pt}
\author[M.~Morandotti]{Marco Morandotti}
\address[M.~Morandotti]{Dipartimento di Scienze Matematiche ``G.~L.~Lagrange'', Politecnico di Torino, Corso Duca degli Abruzzi, 24, 10129 Torino, Italy}
\email{marco.morandotti@polito.it}
\author[D.~R.~Owen]{David R.~Owen}
\address[D.~R.~Owen]{Department of Mathematical Sciences, Carnegie Mellon University, 5000 Forbes Ave., Pittsburgh, PA 15213 USA}
\email{do04@andrew.cmu.edu}
\author[E.~Zappale]{Elvira Zappale}
\address[E.~Zappale]{Dipartimento di Scienze di Base ed Applicate per l'Ingegneria, Sapienza - Universit\`{a} di Roma, Via Antonio Scarpa, 16, 00161 Roma, Italy and
CIMA, Universidade de \'Evora, Portugal}
\email{elvira.zappale@uniroma1.it}

\date{\today}

\subjclass[2020]{74A60 (49J45, 74M99)}
\keywords{Structured deformations, hierarchies, global method for relaxation, energy minimization, integral representation}

\makeindex


\begin{document}

\begin{abstract}
The response of many materials to applied forces and boundary constraints depends upon internal geometric changes at multiple submacroscopic levels. Hierarchical structured deformations provide a mathematical setting for the description of such changes and for the variational determination of the corresponding energetic response. The research in this article provides substantial refinements and broadenings of the mathematical setting both for the underlying geometrical structure and for the variational analysis of energetic response. The mathematical tools employed in this research include the global method for relaxation and establish the equivalence of a relaxed energy obtained via relaxation under simultaneous geometrical changes at all levels and a relaxed energy obtained via iterated relaxations proceeding from the deepest submacroscopic level successively to the macroscopic level.
\end{abstract}

\maketitle

\allowdisplaybreaks

\tableofcontents


\section{Introduction}
The theory of (first-order) structured deformations, introduced by Del Piero and Owen in \cite{DPO1993}, proposes a general framework in the context of continuum mechanics to study deforming bodies, without committing at the onset to a specific mechanical theory, such as elasticity, plasticity, or fracture. 
It does so by considering both macroscopic and sub-macroscopic phenomena in the description of a deformation.
These sub-macroscopic phenomena include, for instance, slips and separations within the lattice of crystalline materials, and are referred to as \emph{disarrangements} in \cite{DPO1993}. 
In traditional macroscopic descriptions, a single field $g$ (and its gradient $\nabla g$) suffice to characterize the deformation of a continuous body. 
The theory of structured deformations introduces an additional geometrical field $G$, of the same tensorial character as $\nabla g$, to account for smooth sub-macroscopic changes,
while the difference $\nabla g - G$ captures the effects of sub-macroscopic disarrangements. 
Hence, two distinct objects are needed to describe the deformation of a continuous body, and a \emph{structured deformation} can be identified with a pair $(g, G)\in SD(\Omega)$, where
\begin{equation}\label{eq_SD}
SD(\Omega) \coloneqq SBV(\Omega;\R{d}) \times L^1(\Omega;\R{d\times N}),
\end{equation}
$SBV(\Omega ;\R{d})$ is the space of special functions of bounded variation on $\Omega $ with values in $\R{d}$, $L^{1}(\Omega ;\R{d\times N})$ is the space of integrable functions on $\Omega $ with values that are $d\times N$ matrices, and $\Omega\subset\R{N}$ is a bounded, connected, open set describing the reference configuration of the body. 
The variational formulation proposed by Choksi and Fonseca \cite{CF1997} relies on an energetic approach to solve the issue of assigning an energy to a structured deformation, with a view toward the application of the direct method of the calculus of variations to find equilibrium configurations of deforming bodies. 
Given an \emph{initial energy} 
\begin{equation}\label{CF} 
E(u) \coloneqq \int_\Omega W(\nabla u(x))\, \de x + \int_{\Omega \cap S(u)} \psi([u](x), \nu_u(x))\, \de\mathcal H^{N-1}(x)
\end{equation}
defined on functions $u\in SBV(\Omega;\R{d})$, the energy $I(g,G)$ assigned to a structured deformation is the one obtained in the energetically most economical way to approximate the pair $(g,G)$ by deformations $u_n\in SBV(\Omega;\R{d})$ in the following sense
\begin{equation}\label{eq_convergence} 
u_n \to g \; \text{in}\; L^1 (\Omega; \mathbb R^d), \qquad \nabla u_n \wsto G\; \text{in}\; \mathcal M( \Omega; \mathbb R^{d\times N}).
\end{equation}
Mathematically speaking, the energy $I$ is defined via relaxation, according to
\begin{equation}\label{eq_relaxation_CF}
I(g,G) \coloneqq \inf \Big\{ \liminf_{n\to +\infty} E(u_n) : \text{$\{u_n\}\subset SBV(\Omega;\R{d})$ converging to $(g,G)$  in the sense of \eqref{eq_convergence}} \Big\}
\end{equation}

Since their introduction in \cite{DPO1993} and their study in the context of the variational theory established in \cite{CF1997}, there has been an extensive body of work concerning both mechanical applications of structured deformations \cite{CDPFO1999,DP2001,DP2018,DPO1995,DO2000,DO2002,DO2002a,DO2003,F2007,FRC2005,FRC2010,FRC2016,GMK2024,O2017,OP2000,OP2015,RCL2023}  and mathematical approaches investigating the energy $I$ and its properties in a variety of contexts, including \cite{AMMZ,BMS2012,nogap,CMMO,FHP,KKMZ,L2000,MMZ,S2015,S17}.
In particular, the results of \cite{CF1997} were generalized to account for continuous spatial dependence of the energy densities~$W$ and~$\psi$ in \eqref{CF}, see, e.g., \cite{BMMO2017,book,MMOZ} and \cite{BMZ2024} for  discontinuous~$W$.

Motivated by the fact that different natural \cite{BBRC2022,Bertoldi_etal,ChenPugno,Gibson,HabibiLu,Lakes,Launey_etal,Ozcoban_etal,WangGupta} and engineered materials \cite{ChenPugno,Gu_etal,Wegst_etal} exhibit multiple levels of sub-macroscopic behavior, Deseri and Owen \cite{DO2019} proposed a generalization of  the theory to account for the effects of such different sub-macroscopic scales, introducing the concept of a \emph{hierarchical structured deformation}.
In this context, given $L \in \mathbb N\setminus\{0\}$, an $(L+1)$-hierarchical structured deformation has $L$ different levels of sub-macroscopic behavior, accounted for by different fields $G_i$\,, ($i = 1, \ldots, L$). A structured deformation ($L=1$) in the original sense of \cite{DPO1993}, corresponds to a \emph{two-level hierarchical structured deformation} in the sense of \cite{DO2019}.

This work departs from two previous articles which propose a variational approach to hierarchical structured deformations \cite{BMMOZ,BMZ2024} and has a two-fold objective.
On the one hand, it extends the known results for two-level structured deformations to the case under consideration: we obtain here the Approximation Theorem (see Theorem~\ref{appTHMh}) for hierarchical structured deformations $(g,G_1,\ldots,G_L)$ (see Definition~\ref{Def2.1}), and we provide asymptotic cell formulae for the relaxed bulk and surface energy densities (see \eqref{f2p} and \eqref{Phi2p}).
The novelty in this paper rests on the fact that we achieve these results under fairly weak regularity assumptions on the initial bulk energy density; in particular, we consider a Carath\'{e}odory function $(x,A)\mapsto W(x,A)$, thus allowing us to account for non-homogeneous behavior of the body.
Under this weaker assumption, we stress that Theorem \ref{thm_approx_CF} generalizes all previous results pertaining to the variational formulation of two-level structured deformations in the sense of \cite{CF1997}.

On the other hand, in proving the integral representation result in the specific case of three-level hierarchical structured deformations $(g,G_1,G_2)$ (see Theorem~\ref{mainL}), we set the basis for the extension to the general case $L>2$ (the case $L=3$ is addressed in Appendix~\ref{app_L=3}).
We adopt here an approach based on the global method for relaxation \cite{BFM1998}, that follows and adapts the results in~\cite{BMZ2024,BMZGaeta}, as opposed to~\cite{BMMOZ}, where the assignment of an energy to a hierarchical structured deformation is established by means of a recursive relaxation scheme. 
In that case, departing from the initial energy \eqref{CF}, the full relaxation is obtained by performing $L$ partial relaxations, by upscaling, step by step, from the most sub-macroscopic level to the macroscopic one.
Each of these partial relaxations is performed \emph{\`{a} la} Choksi-Fonseca, using the machinery of \cite[Theorems 2.16 and 2.17]{CF1997}, upon verifying that the partially relaxed energy densities that are obtained at each step still verify the hypotheses that allow for the next step to be carried out successfully.

As already pointed out in \cite{BMMOZ}, it is key both for its mathematical interest and its mechanical one to perform an all-at-once relaxation, which is suggested by our Approximation Theorem~\ref{appTHMh}. 
Indeed, the relaxation
\begin{equation*}
\begin{split}
I(g,G_1,\ldots,G_L)\coloneqq \inf\Big\{\liminf_{n_1}\cdots\liminf_{n_L} E\big(u_{n_1,\ldots,n_L}\big):&\, (u_{n_1,\ldots,n_L})\subset SBV(\Omega;\R{d}),\\
&\,\, u_{n_1,\ldots,n_L} \weakstH (g,G_1,\ldots,G_L)\Big\}
\end{split}
\end{equation*}
is well defined, as the set of approximating sequences is not empty. 
This procedure, carried out in this work under the rather general set of Assumptions~\ref{gen_ass}, leads, in principle, to a relaxed energy  which is, in general not greater than that obtained in \cite{BMMOZ} via the recursive relaxation. 
However, comparing the explicit example \cite[Section~3.3]{BMMOZ} with our result, we conclude that there is at least one case where the relaxed energies coincide (see Example~\ref{example}).

We point out that the approaches in \cite{BMMOZ} and herein, both can be deduced as iterated schemes inspired by \cite{CF1997}; the main difference is that the present method does not freeze any field appearing in the previous steps, since our iterated scheme is fully equivalent to a relaxation where all the levels of deformations iteratively converge.

Departing from the fact that the Approximation Theorem can indeed be viewed as an iterated scheme (see Corollary~\ref{S004} and Proposition~\ref {T003}), but without fixed targets at each step as in \cite{BMMOZ}. 
We note that some of the properties of the energy densities are preserved, while others change after the first iteration step, as illustrated in Theorem~\ref{thm_propdens}. 
We close this introduction by mentioning that Theorem~\ref{globalmethod-new}, which is established for a general $L\in\mathbb{N}\setminus\{0\}$, is the key ingredient for the proof of the main result.

\smallskip

The plan of the paper is the following: in Section~\ref{prelim}, we establish the notation, define the space of $(L+1)$-level hierarchical structured deformations, prove the Approximation Theorem, and include some auxiliary results; in Section~\ref{sec_gen_CF}, we list the standing assumptions on the initial energy densities $W$ and $\psi$ and we generalize the relaxation result from \cite{CF1997} to our setting; in Section~\ref{multi}, we state and prove the relaxation result for the case $L=2$, whereas we postpone to Appendix~\ref{app_L=3} an illustration of the result for $L=3$.

\color{black}
\section{Preliminaries}\label{prelim}

\subsection{Notation and main definitions} 
Throughout the manuscript, we will use the following notations. 
\begin{itemize}
	\item $\Omega \subset \mathbb R^{N}$ is a bounded, connected open set with Lipschitz boundary. 
	$\mathbb S^{N-1}$ is the unit sphere in $\mathbb R^N$; 
	\item (cubes) $Q\coloneqq (-\tfrac12,\tfrac12)^N$ denotes the open unit cube of $\mathbb R^{N}$ centered at the origin; for any $\nu\in\mathbb S^{N-1}$, $Q_\nu$ denotes any open unit cube in $\mathbb R^{N}$ with two faces orthogonal to $\nu$;  for any $x\in\mathbb R^{N}$ and $\delta>0$, $Q(x,\delta)\coloneqq x+\delta Q$ denotes the open cube in $\mathbb R^{N}$ centred at $x$ with side length $\delta$ and $Q_\nu(x,\delta)\coloneqq x+\delta Q_\nu$;
	\item ${\mathcal O}(\Omega)$ is the family of all open subsets of $\Omega $;
	\item $SD(\Omega)$, $SD_{L,p}(\Omega)$, and $SD_L(\Omega)$ (see \eqref{eq_SD} and Definition~\ref{Def2.1}), with $L\in\mathbb{N}\setminus\{0\}$ and $p\geq1$, are spaces of (hierarchical) structured deformations;
	\item unless otherwise specified, $C>0$ represents a generic constant that may change from line to line.
\end{itemize}

\medskip 

The variational theory of structured deformations finds its natural setting in the space $SBV(\Omega;\R{d})$; we refer the reader to \cite{AFP} for a general exposition on $(S)BV$ functions.

A fundamental result in the theory of structured deformations is the Approximation Theorem \cite[Theorem~5.8]{DPO1993}, a counterpart of which was recovered in \cite[Theorem~2.12]{CF1997} in the $SBV$ framework and in \cite{KKMZ,S2015} in a broader setting.  
In simple terms, \cite[Theorem~2.12]{CF1997} states that given a structured deformation $(g,G) \in SBV(\Omega; \R{d}) \times L^1(\Omega; \R{d\times N})$, there exists a sequence $u_n \in SBV(\Omega; \R{d})$ such that
\begin{equation}\label{1000}
u_n \to g \;\; \text{in}\;  L^1(\Omega; \R{d}) \qquad \text{and}\qquad \nabla u_n \wsto G\;\;\text{in}\;\cM(\Omega;\R{d\times N}).
\end{equation}
Its proof is a consequence of the following two results. 
\begin{theorem}[{\cite[Theorem~3]{AL}}]\label{Al}
Let $f \in L^1(\Omega; \R{d{\times} N})$. 
Then there exist $u \in SBV(\Omega; \R d)$, a Borel function $\beta\colon\Omega\to\R{d{\times} N}$, and a constant $C>0$ depending only on $N$, such that
\begin{equation}\label{817}
Du = f \,{\cL}^N + \beta \cH^{N-1}\res S_u, \qquad
\int_{S_u\cap\Omega} |\beta(x)| \, \de \cH^{N-1}(x) \leq C \lVert f\rVert_{L^1(\Omega; \R{d {\times} N})}.
\end{equation}
\end{theorem}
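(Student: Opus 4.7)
The strategy I would adopt is Alberti's sawtooth construction. The plan is to build, for each small scale $\delta>0$, a function $u_\delta\in SBV(\Omega;\R{d})$ whose absolutely continuous gradient is a piecewise-constant approximation $f_\delta$ of $f$ at scale $\delta$ and whose singular part has total variation uniformly bounded by $C\lVert f\rVert_{L^1}$ with $C$ depending only on $N$; the desired $u$ is then obtained as the limit $\delta\to 0$.

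At the heart of the argument lies an \emph{elementary cell estimate}: given a constant matrix $A\in\R{d\times N}$ and an open cube $Q(x_0,\delta)$, one exhibits $v\in SBV(Q(x_0,\delta);\R{d})$ with $\nabla v = A$ almost everywhere, $\lVert v\rVert_{L^\infty}\leq C\delta|A|$, and $|D^s v|(Q(x_0,\delta))\leq C|A|\delta^N$. To build $v$, I would decompose $A=\sum_{j=1}^{N}a_j\otimes e_j$ into its columns and take, for each $j$, a one-dimensional sawtooth profile $\varphi_j$ of slope $1$ and period $\delta$ in the scalar variable $x\cdot e_j$, setting $v(x)\coloneqq\sum_j a_j\,\varphi_j(x\cdot e_j)$. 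Between jumps one has $\nabla v=\sum_j a_j\otimes e_j=A$, while the singular part is supported on $N$ families of parallel hyperplanes with total $(N-1)$-measure $O(\delta^{N-1})$ per family and jumps of size $O(\delta|a_j|)$, whence the stated bound.

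The global construction proceeds by taking a disjoint partition of $\Omega$ (up to an $\cL^N$-null set) into cubes $Q(x_i,\delta)$, defining the cell averages $A_i\coloneqq\delta^{-N}\int_{Q(x_i,\delta)}f\,\de x$, running the elementary cell construction on each cube to produce $v_i$, and letting $u_\delta\coloneqq\sum_i v_i\,\chi_{Q(x_i,\delta)}$, extended by zero outside. Then $\nabla u_\delta$ coincides a.e.\ with the piecewise-constant field $f_\delta\coloneqq\sum_i A_i\,\chi_{Q(x_i,\delta)}$. The jump set of $u_\delta$ splits into two parts: the interior sawtooth hyperplanes, controlled cube-by-cube by the elementary estimate, and the cube boundaries $\bd Q(x_i,\delta)$, where the discontinuity between $v_i$ and $0$ has size at most $\lVert v_i\rVert_{L^\infty}=O(\delta|A_i|)$ spread over an $(N-1)$-measure of order $\delta^{N-1}$. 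Both parts scale as $|A_i|\,\cL^N(Q(x_i,\delta))$, and summing over $i$, using $|A_i|\,\cL^N(Q(x_i,\delta))\leq\int_{Q(x_i,\delta)}|f|\,\de x$, yields $|D^s u_\delta|(\Omega)\leq C\lVert f\rVert_{L^1}$ uniformly in $\delta$.

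Finally, to pass from the family $\{u_\delta\}$ to $u$, I would exploit the linearity of the construction in $f$ on a fixed dyadic grid: taking a sequence of piecewise-constant approximations $f_n\to f$ in $L^1$ (successively refining the grid) with $\lVert f_{n+1}-f_n\rVert_{L^1}$ summable, the differences $u_{n+1}-u_n$ correspond to $f_{n+1}-f_n$ and satisfy $|D(u_{n+1}-u_n)|(\Omega)\leq C\lVert f_{n+1}-f_n\rVert_{L^1}$; this makes $\{u_n\}$ Cauchy in $BV(\Omega;\R{d})$, producing a limit $u$, and standard $SBV$ closure results ensure $u\in SBV(\Omega;\R{d})$ with $\nabla u=f$ and $|D^s u|(\Omega)\leq C\lVert f\rVert_{L^1}$. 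The Radon--Nikodym decomposition $D^s u=\beta\,\cH^{N-1}\res S_u$ then yields the Borel density $\beta$ with the claimed integral bound. The main obstacle is the dimensional bookkeeping forcing $C$ to depend only on $N$: the amplitude $O(\delta|A_i|)$ of the sawtooth and the $(N-1)$-area $O(\delta^{N-1})$ of the cube boundary must combine into exactly $|A_i|\cL^N(Q(x_i,\delta))$ per cube, with no residual dependence on $\lVert f\rVert_{L^\infty}$, on the number of cubes, or on $d$; the rank-one column decomposition contributes an unavoidable dimensional factor $N$.
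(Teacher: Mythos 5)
This statement is not proved in the paper: it is quoted verbatim as Alberti's theorem (\cite[Theorem~3]{AL}) and used as a black box, so there is no in-paper argument to compare against. Your sketch is, in substance, the standard proof of Alberti's result: an elementary sawtooth construction on cubes realizing a constant gradient with singular variation of order $|A|\,\cL^N(Q)$, a gluing over a grid with the cube-boundary jumps controlled by the $L^\infty$ bound on the cell functions, and a passage to the limit along piecewise-constant approximations of $f$. The cell estimate, the bookkeeping $|A_i|\,\cL^N(Q(x_i,\delta))\leq\int_{Q(x_i,\delta)}|f|\,\de x$, and the observation that the constant picks up only a factor of order $N$ from the column decomposition are all correct.

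The one step that does not work as written is the final Cauchy argument. Linearity of the map $f\mapsto u_\delta$ holds only for a \emph{fixed} grid and fixed sawtooth period; a sawtooth of period $2^{-n}$ is not a sum of sawtooths of period $2^{-n-1}$, so if $u_n$ denotes the construction for $f_n$ on the $n$-th grid, then $u_{n+1}-u_n$ is \emph{not} the construction applied to $f_{n+1}-f_n$, and the crude bound $|D(u_{n+1}-u_n)|(\Omega)\leq |D^su_{n+1}|(\Omega)+|D^su_n|(\Omega)+\lVert f_{n+1}-f_n\rVert_{L^1}$ does not tend to zero. The standard repair is to set $g_0\coloneqq f_0$, $g_k\coloneqq f_k-f_{k-1}$ with $\sum_k\lVert g_k\rVert_{L^1}\leq C\lVert f\rVert_{L^1}$, run the construction \emph{independently} on each $g_k$ (at any convenient scale $\delta_k$) to get $w_k$ with $\lVert w_k\rVert_{L^1}+|Dw_k|(\Omega)\leq C\lVert g_k\rVert_{L^1}$, and define $u\coloneqq\sum_k w_k$, which converges in $BV$ norm. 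One must then also justify that $u\in SBV$ with $\nabla u=f$: each $D^sw_k$ is concentrated on a countable union of pieces of hyperplanes, hence the total-variation limit of the singular parts remains concentrated on an $\cH^{N-1}$-$\sigma$-finite set and no Cantor part can appear. With this modification your argument is complete; as it stands, the claimed Cauchy property of $\{u_n\}$ is a genuine gap.
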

\begin{lemma}[{\cite[Lemma~2.9]{CF1997}}]\label{ctap}
Let $u \in BV(\Omega; \R d)$. Then there exist piecewise constant functions $\bar u_n\in SBV(\Omega;\R d)$  such that $\bar u_n \to u$ in $L^1(\Omega; \R d)$ and
\begin{equation}\label{818}
|Du|(\Omega) = \lim_{n\to +\infty}| D\bar u_n|(\Omega) = \lim_{n\to +\infty} \int_{S_{\bar u_n}} |[\bar u_n](x)|\; \de\cH^{N-1}(x).
\end{equation}
\end{lemma}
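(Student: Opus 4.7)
The plan is to produce $\bar u_n$ via a two-stage approximation --- first strictly mollifying $u$ to smooth functions $u_k$, then discretizing each $u_k$ to piecewise constant functions $v_{k,n}$ --- and close with a diagonal extraction. By the classical strict-convergence approximation of $BV$ functions by smooth ones, I would first pick $u_k \in C^\infty(\Omega;\R{d}) \cap W^{1,1}(\Omega;\R{d})$ with $u_k \to u$ in $L^1(\Omega;\R{d})$ and $\int_\Omega |\nabla u_k|\,\de x \to |Du|(\Omega)$. This reduces the task to approximating each smooth $u_k$ by a piecewise constant function of nearly the same total variation.

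For each $k$, I would then construct piecewise constant $v_{k,n}\in SBV(\Omega;\R{d})$ such that $v_{k,n} \to u_k$ in $L^1(\Omega;\R{d})$ as $n\to\infty$ and $\limsup_{n}\int_{S_{v_{k,n}}} |[v_{k,n}]|\,\de\cH^{N-1} \leq \int_\Omega |\nabla u_k|\,\de x$. In the scalar case $d=1$ this follows immediately from the co-area formula: setting $v_{k,n} \coloneqq \eps_n \lfloor u_k/\eps_n \rfloor$ for some $\eps_n \to 0$, the jumps of $v_{k,n}$ concentrate on the level sets $\{u_k = m\eps_n\}$ and have size $\eps_n$, so
\[
\int_{S_{v_{k,n}}} |[v_{k,n}]|\,\de\cH^{N-1} = \eps_n \sum_{m\in\mathbb{Z}} \cH^{N-1}(\{u_k = m\eps_n\}\cap\Omega),
\]
which is a Riemann sum converging to $\int_{\mathbb{R}} \cH^{N-1}(\{u_k=t\}\cap\Omega)\,\de t = \int_\Omega |\nabla u_k|\,\de x$. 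For vector-valued $u_k$ one slices along a dense countable family of directions in $\S{N-1}$ and patches together the scalar constructions on the resulting rectifiable partition.

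Combining the construction above with the lower semicontinuity of the total variation along $L^1$-convergent sequences gives, for every $k$,
\[
|Du|(\Omega) \leq \liminf_{n\to\infty}|Dv_{k,n}|(\Omega) \leq \limsup_{n\to\infty}|Dv_{k,n}|(\Omega) \leq \int_\Omega |\nabla u_k|\,\de x,
\]
and the right-hand side tends to $|Du|(\Omega)$ as $k\to\infty$ by strict convergence. A diagonal extraction then produces $\bar u_n \coloneqq v_{k(n),n}$ piecewise constant with $\bar u_n \to u$ in $L^1(\Omega;\R{d})$ and $|D\bar u_n|(\Omega) \to |Du|(\Omega)$; since $\bar u_n$ is piecewise constant, $D\bar u_n$ is purely singular and concentrated on $S_{\bar u_n}$, hence $|D\bar u_n|(\Omega) = \int_{S_{\bar u_n}} |[\bar u_n]|\,\de\cH^{N-1}$, closing the chain of equalities in \eqref{818}. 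The main obstacle I foresee is precisely the vector-valued discretization: a scalar map has a canonical co-area decomposition that makes the argument transparent, but for $d \geq 2$ a naive cube-averaging procedure inflates the variation by a constant factor of order $N$, and a careful multi-directional slicing or simultaneous level-set construction --- together with a boundary-avoiding choice of grid shift to prevent faces of the partition from accumulating mass on $\partial\Omega$ --- is required to recover the sharp bound.
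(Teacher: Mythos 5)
The paper offers no proof of this lemma --- it is quoted verbatim from \cite[Lemma~2.9]{CF1997} --- so your attempt must stand on its own. The scalar part of your plan (strict smooth approximation followed by level-set quantization and the coarea formula) is sound, up to the standard caveat that $t\mapsto \cH^{N-1}(\{u_k=t\}\cap\Omega)$ is merely an $L^1$ function of $t$, so the Riemann sums converge to $\int_\Omega|\nabla u_k|\,\de x$ only after an averaging argument selecting a good offset of the value grid. The genuine gap is exactly where you place it, in the vector-valued discretization, but the situation is worse than you anticipate: the sharp bound you hope to ``recover'' by multi-directional slicing is provably unattainable when $\min(d,N)\geq 2$ and $|Du|$ is the Euclidean (Frobenius) total variation. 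Take $d=N=2$ and $u=\ell_{\Id}$ on a square $Q$. For any piecewise constant $v$ one has $Dv=[v]\otimes\nu_v\,\cH^{1}\res S_v$, whose polar is rank one, so for every $\varphi\in C_c(Q)$ with $0\leq\varphi\leq 1$ and every matrix $X$ with operator norm at most one, $\int_Q\varphi\,\de\langle Dv,X\rangle\leq |Dv|(Q)$. If $v_n\to\ell_{\Id}$ in $L^1$ with bounded variations, then $Dv_n\wsto \Id\,\cL^2$, and choosing $X=\Id$ yields $\liminf_n|Dv_n|(Q)\geq 2\,\cL^2(Q)$, whereas $|D\ell_{\Id}|(Q)=\sqrt2\,\cL^2(Q)$. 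Equivalently, Reshetnyak's continuity theorem forbids $|Dv_n|(\Omega)\to|Du|(\Omega)$ along piecewise constant $v_n$ whenever the polar of $Du$ fails to be rank one on a set of positive $|Du|$-measure; the attainable infimum for an affine target $\ell_A$ is the nuclear norm $\|A\|_*\cL^N(Q)$, not $|A|\cL^N(Q)$. No slicing or simultaneous level-set scheme can therefore close your chain of inequalities.

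What your componentwise construction does deliver is the statement with $\sum_{i=1}^d|Du^i|(\Omega)$ in place of $|Du|(\Omega)$, hence, with the Euclidean convention, the two-sided bound $|Du|(\Omega)\leq\liminf_n|D\bar u_n|(\Omega)\leq\limsup_n\int_{S_{\bar u_n}}|[\bar u_n]|\,\de\cH^{N-1}\leq \sqrt{d}\,|Du|(\Omega)$. That weaker form is all this paper ever uses: in Theorem~\ref{appTHMh}, Lemma~\ref{ubIp}, and Remark~\ref{remdens}(b) only a uniform bound of the form $C|Du|(\Omega)$ on the jump energy of the approximants is needed. So the constructive core of your proposal is adequate for the role the lemma plays here, but it does not --- and cannot --- establish the exact equalities in \eqref{818} for $d,N\geq 2$; to be rigorous you should either prove the componentwise version or restate \eqref{818} with a dimensional constant.
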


We now recall the notion of a multi-level structured deformation, see \cite[Section~2]{DO2019} and \cite[Section~3.2.7]{book}. 
\begin{definition}\label{Def2.1}
For $L\in\N{}\setminus\{0\}$, $p\geq 1$, and $\Omega\subset\R{N}$, we define the set of \emph{$(L+1)$-level (first-order) structured deformations} on $\Omega$ as
$$SD_{L,p}(\Omega)\coloneqq SBV(\Omega;\R{d})\times \underbrace{L^1(\Omega;\R{d\times N})\times\cdots\times L^1(\Omega;\R{d\times N})}_{(L-1)\text{-times}}\times L^p(\Omega;\mathbb R^{d\times N})$$
In particular when $p=1$, the space will be simply denoted by $SD_L(\Omega)$, i.e.
$$SD_{L}(\Omega)\coloneqq SBV(\Omega;\R{d})\times \underbrace{L^1(\Omega;\R{d\times N})\times\cdots\times L^1(\Omega;\R{d\times N})}_{L\text{-times}}. $$ 
\end{definition} 
In the case $L=1$, the space $SD_1(\Omega)$ coincides with the space $SD(\Omega)$ in \eqref{eq_SD}, introduced in \cite{CF1997}.

The convergence of a (multi-indexed) sequence of $SBV$ functions to an $(L+1)$-level structured deformation $(g,G_1,\ldots,G_L)$, belonging to  $SD_{L,p}(\Omega)$, is defined as follows. 
\begin{definition}[{\cite[Definition~3.5]{book}}]\label{S000}
Let $L\in\N{}\setminus\{0\}$ and $p\geq1$, and let $(g,G_1,\ldots,G_L)\in SD_{L,p}(\Omega)$, and let $\N{L}\ni(n_1,\ldots,n_L)\mapsto u_{n_1,\ldots,n_L}\in SBV(\Omega;\R{d})$ be a (multi-indexed) sequence. We say that $\big(u_{n_1,\ldots,n_L}\big)$ converges in the sense of $SD_{L,p}(\Omega)$ to $(g,G_1,\ldots,G_L)$ if
\begin{itemize}
	\item[(i)] $\displaystyle\lim_{n_1}\cdots\lim_{n_L} u_{n_1,\ldots,n_L} = g$, with each of the iterated limits in the sense of $L^1(\Omega;\R{d})$;
	\item[(ii)] for all $\ell=1,\ldots,L-1$, there exists a sequence $\N{\ell}\ni(n_1,\ldots,n_\ell)\mapsto g_{n_1,\ldots,n_\ell} \in SBV(\Omega;\R{d})$ such that
		$$\begin{cases}
		\displaystyle \lim_{n_{\ell+1}}\cdots\lim_{n_L} u_{n_1,\ldots,n_L} = g_{n_1,\ldots,n_\ell}\,, & \text{with each of the iterated limits in the sense of $L^1(\Omega;\R{d})$,}\\[2mm]
		\displaystyle \lim_{n_1}\cdots\lim_{n_{\ell}} \nabla g_{n_1,\ldots,n_\ell}=G_{\ell}\,, & \text{with each of the iterated limits in the sense}\\
		&\text{of weak* convergence in $\cM(\Omega;\R{d\times N})$};
		\end{cases}$$
		\item[(iii)] $\displaystyle \lim_{n_1}\cdots\lim_{n_L} \nabla u_{n_1,\ldots,n_L} = G_L$ with each of the iterated limits in the sense of weak* convergence in  $\cM(\Omega;\R{d\times N})$.
	\end{itemize}
We use the notation $u_{n_1,\ldots,n_L}\weakstH (g,G_1,\ldots,G_L)$ to indicate this convergence.
Note that, if $L=1$, condition (ii) above is void.
\end{definition}
\begin{remark}\label{remLp}
We make the following observations on Definition~\ref{S000}:
\begin{itemize}
\item Notice that, if $p>1$ and the additional condition
$$\sup_{n_1}\cdots\sup_{n_L}\int_\Omega |\nabla u_{n_1,\ldots,n_L}|^p \,\de x < +\infty$$
holds true, then the iterated limits in (iii) hold indeed in the sense of $L^p$-weak convergence.
\item In the case $L=1=p$ Definition~\ref{S000} recovers the notion of convergence of $(u_n)$ to $(g,G)$ in the sense of \cite[Theorem~2.12]{CF1997}. 
\item For the sake of clarity, we write explicitly the convergence in the case of a $3$-level structured deformation $(g,G_1,G_2)\in SD_{2,p}(\Omega)$. A double-indexed sequence $\big(u_{n_1, n_2}\big)$ converges to $(g,G_1,G_2)$ in the sense of Definition \ref{S000} provided that
\begin{itemize}
\item[(i)] $ \displaystyle \lim_{n_1}\lim_{n_2} u_{n_1, n_2} = g$
in $L^1(\Omega;\mathbb R^{d})$;
\item[(ii)] there exists a sequence $(g_{n_1}) \subset SBV(\Omega;\R{d})$ such that $\displaystyle\lim_{n_2} u_{n_1, n_2} = g_{n_1}$ in $L^1(\Omega;\R{d})$ (for every $n_1\in\N{}$), and $\displaystyle \lim_{n_1} \nabla g_{n_1} = G_1$\,, weakly* in $\cM(\Omega;\R{d\times N})$;
\item[(iii)] $ \displaystyle\lim_{n_1}\lim_{n_2}  \nabla u_{n_1, n_2} = G_2,$ weakly* in $\cM(\Omega;\mathbb R^{d\times N})$.
\end{itemize}
\end{itemize} 
\end{remark}

\subsection{Approximation theorem and auxiliary results}

The ideas behind the construction of a sequence satisfying \eqref{1000}, namely Theorem \ref{Al} and Lemma \ref{ctap}, allow us to obtain a (multi-indexed) sequence $(u_{n_1,\ldots,n_L})$ that approximates an $(L+1)$-level structured deformation $(g,G_1,\ldots,G_L)$. We thus obtain the following result, whose proof is an immediate adaptation of \cite[Theorem 3.3]{BMMOZ}.

\begin{theorem}[{Approximation Theorem for $(L+1)$-level structured deformations}]\label{appTHMh}
	Let $L\in\N{}\setminus\{0\}$ and $(g,G_1,\ldots,G_L)\in SD_{L,p}(\Omega)$. 
	Then there exists a sequence $(n_1,\ldots,n_L)\mapsto u_{n_1,\ldots,n_L}\in SBV(\Omega;\R{d})$
	converging to $(g,G_1,\ldots,G_L)$ in the sense of Definition~\ref{S000}.
\end{theorem}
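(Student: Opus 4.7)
The strategy would be to proceed by induction on $L$, the base case $L = 1$ being the Choksi--Fonseca approximation theorem, whose construction I would unpack explicitly. Given $(g,G) \in SBV(\Omega;\R{d}) \times L^p(\Omega;\R{d\times N})$, I would first apply Theorem~\ref{Al} to $G - \nabla g \in L^1(\Omega;\R{d\times N})$ to produce $w \in SBV(\Omega;\R{d})$ with $\nabla w = G - \nabla g$ almost everywhere and with the $\cH^{N-1}$-jump control \eqref{817}. Then I would apply Lemma~\ref{ctap} to $-w \in BV(\Omega;\R{d})$ to obtain piecewise constant functions $\bar u_n \in SBV(\Omega;\R{d})$ with $\bar u_n \to -w$ in $L^1$. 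Setting $u_n \coloneqq g + w + \bar u_n$, the sum lies in $SBV(\Omega;\R{d})$, converges to $g$ in $L^1$, and satisfies $\nabla u_n \equiv G$ (since the absolutely continuous gradient of a piecewise constant function vanishes), whence $\nabla u_n \wsto G$ in $\cM(\Omega;\R{d\times N})$ holds trivially.

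For the inductive step, I would assume the statement for $L - 1$ levels, obtaining an approximating multi-indexed sequence $g_{n_1,\ldots,n_{L-1}} \in SBV(\Omega;\R{d})$ for $(g, G_1, \ldots, G_{L-1}) \in SD_{L-1}(\Omega)$ in the sense of Definition~\ref{S000}. By iterating the base-case construction one may additionally arrange the stronger identity $\nabla g_{n_1,\ldots,n_\ell} \equiv G_\ell$ for each $\ell \leq L - 1$. For each fixed $(n_1,\ldots,n_{L-1})$ I would then reapply the base case to the pair $(g_{n_1,\ldots,n_{L-1}}, G_L) \in SD_{1,p}(\Omega)$: since $G_L - \nabla g_{n_1,\ldots,n_{L-1}} = G_L - G_{L-1}$ is independent of the outer indices, Theorem~\ref{Al} produces a single $w \in SBV(\Omega;\R{d})$, Lemma~\ref{ctap} yields piecewise constant $\bar u_{n_L} \to -w$ in $L^1$, and I would set $u_{n_1,\ldots,n_L} \coloneqq g_{n_1,\ldots,n_{L-1}} + w + \bar u_{n_L}$.

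Item~(iii) of Definition~\ref{S000} would then be immediate from the identity $\nabla u_{n_1,\ldots,n_L} \equiv G_L$; the first part of item~(ii) from $\lim_{n_L} u_{n_1,\ldots,n_L} = g_{n_1,\ldots,n_{L-1}}$ in $L^1$ paired with the iterated limits supplied by the inductive hypothesis; and item~(i) from chaining the resulting $L^1$-limits. The main technical point to monitor is the preservation of the $SBV$ structure through all $L$ iterations---namely, that the $w$ produced by Alberti's theorem has finite $\cH^{N-1}$-jump measure so that adding it to the current approximant keeps us in $SBV$---which is exactly the quantitative content of \eqref{817}. Finally, in the case $p > 1$ the construction delivers the pointwise identity $|\nabla u_{n_1,\ldots,n_L}| = |G_L|$, giving uniform $L^p$-boundedness and hence, by Remark~\ref{remLp}, upgrading the weak-$*$ convergence in $\cM(\Omega;\R{d\times N})$ to weak convergence in $L^p$ at the deepest level.
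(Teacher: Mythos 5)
Your proposal is correct and follows essentially the same route as the paper: the paper's proof (an adaptation of \cite[Theorem 3.3]{BMMOZ}, with the mechanism made explicit in Corollary~\ref{S004} and Remark~\ref{casepRem}) likewise iterates the one-level Choksi--Fonseca construction built from Theorem~\ref{Al} and Lemma~\ref{ctap}, level by level, taking the deepest-level targets constant in the outer indices. Your explicit observations --- that one may arrange $\nabla u_{n_1,\ldots,n_L}\equiv G_L$ exactly, so that a single $w$ from Alberti's theorem serves all outer indices, and that this yields the $L^p$-bound of Remark~\ref{remLp} when $p>1$ --- are exactly the features the paper exploits (cf.\ \eqref{apprGamma} and \eqref{defappr}).
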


The following corollary provides an alternative proof of the Approximation Theorem~\ref{appTHMh} which will be useful to prove our representation theorems (see Theorem~\ref{thm_approx_CF} and Theorem \ref{mainL} below). 
For simplicity of exposition, we state and prove it in  $SD_{2,p}(\Omega)$; the case for $L=3$ is presented in Appendix~\ref{app_L=3}. The arguments given there can be iterated for a general $L \in \mathbb N$. 

\begin{corollary}\label{S004}
Let $p\geq1$. For every $(g,G_1,G_2)\in SD_{2,p}(\Omega)$ and for every sequence of structured deformations $\big((\gamma_{n_1},\Gamma_{n_1})\big)$ in $SD_{1}(\Omega)$ such that $(\gamma_{n_1})$ converges to $(g,G_1)$ in the sense of Definition~\ref{S000} and $\Gamma_{n_1}\wsto G_2$ in $\cM(\Omega;\R{d\times N})$, there exists a sequence $(n_1,n_2)\mapsto u_{n_1,n_2}$ converging to $(g,G_1,G_2)$ in the sense of Definition~\ref{S000}.
\end{corollary}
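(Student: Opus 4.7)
The plan is to reduce the construction to an iterated application of the classical two-level Approximation Theorem, that is, Theorem~\ref{Al} combined with Lemma~\ref{ctap} (equivalently, \cite[Theorem~2.12]{CF1997}). The guiding observation is that, for every fixed $n_1$, the pair $(\gamma_{n_1},\Gamma_{n_1})$ itself belongs to $SD_{1}(\Omega)$, so it can be treated as a two-level structured deformation and one can invoke the classical result to produce an $n_2$-indexed inner approximation. The assumed outer $n_1$-convergences $\gamma_{n_1}\to g$ in $L^1$, $\nabla\gamma_{n_1}\wsto G_1$, and $\Gamma_{n_1}\wsto G_2$ will then take care of the outer layer of Definition~\ref{S000} with no further work, and the $L^p$-integrability of $G_2$ plays no role since Definition~\ref{S000} only requires iterated $L^1$ and weak* convergences.

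Concretely, for each fixed $n_1\in\N{}$, I would apply the classical Approximation Theorem to $(\gamma_{n_1},\Gamma_{n_1})\in SD_{1}(\Omega)$ in order to obtain a sequence $(u_{n_1,n_2})_{n_2}\subset SBV(\Omega;\R{d})$ with
\begin{equation*}
u_{n_1,n_2}\to\gamma_{n_1}\ \text{in}\ L^1(\Omega;\R{d})\qquad\text{and}\qquad \nabla u_{n_1,n_2}\wsto\Gamma_{n_1}\ \text{in}\ \cM(\Omega;\R{d\times N}),
\end{equation*}
as $n_2\to+\infty$. The candidate approximating family for the corollary is then this double-indexed sequence $(u_{n_1,n_2})$, coupled with the choice $g_{n_1}\coloneqq\gamma_{n_1}$ for the intermediate target that appears in item (ii) of Definition~\ref{S000}.

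It then remains to verify the three conditions of the $L=2$ instance of Definition~\ref{S000}, as unpacked at the end of Remark~\ref{remLp}. For (i), the inner $L^1$-limit $\lim_{n_2}u_{n_1,n_2}=\gamma_{n_1}$ and the outer hypothesis $\lim_{n_1}\gamma_{n_1}=g$ in $L^1$ combine to yield $\lim_{n_1}\lim_{n_2}u_{n_1,n_2}=g$. For (ii), the same inner limit gives $\lim_{n_2}u_{n_1,n_2}=g_{n_1}$, while $\nabla g_{n_1}=\nabla\gamma_{n_1}\wsto G_1$ by assumption. For (iii), the inner weak* limit $\nabla u_{n_1,n_2}\wsto\Gamma_{n_1}$, together with the outer hypothesis $\Gamma_{n_1}\wsto G_2$, furnishes $\lim_{n_1}\lim_{n_2}\nabla u_{n_1,n_2}=G_2$ in weak* sense.

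The ``hard part'' is more organizational than analytical: all convergences in Definition~\ref{S000} are iterated rather than diagonal, so no joint extraction or Attouch-type diagonal argument is required, and one only has to align carefully which index is driven to infinity at each step and check that the choice $g_{n_1}\coloneqq\gamma_{n_1}$ is compatible both with the inner $SBV$-approximation and with the outer assumptions. This same iterative scheme extends to the general case $L>2$ treated in Appendix~\ref{app_L=3}: apply Theorem~\ref{Al} and Lemma~\ref{ctap} at the innermost level $n_L$ to each $(\gamma_{n_1,\dots,n_{L-1}},\Gamma_{n_1,\dots,n_{L-1}})\in SD_{1}(\Omega)$, and set $g_{n_1,\dots,n_\ell}\coloneqq\gamma_{n_1,\dots,n_\ell}$ at every intermediate level $\ell=1,\dots,L-1$.
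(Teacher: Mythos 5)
Your proposal is correct and follows essentially the same route as the paper: for each fixed $n_1$ apply the classical Approximation Theorem of \cite{CF1997} to $(\gamma_{n_1},\Gamma_{n_1})\in SD_1(\Omega)$, set $u_{n_1,n_2}$ equal to the resulting inner approximants with $g_{n_1}\coloneqq\gamma_{n_1}$, and read off conditions (i)--(iii) of Definition~\ref{S000} from the iterated limits. The only detail the paper records beyond this (and which the corollary itself does not need, but is used later in Proposition~\ref{T003}) is that the inner approximants can be chosen with $\nabla v_{n_2}^{(n_1)}=\Gamma_{n_1}$ exactly.
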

\begin{proof}
By the Approximation Theorem \cite[Theorem~2.12]{CF1997}, for every $n_1\in\N{}$, there exists a sequence $n_2\mapsto v_{n_2}^{(n_1)}\in SBV(\Omega;\R{d})$ such that $v_{n_2}^{(n_1)}\to \gamma_{n_1}$ in $L^1(\Omega;\R{d})$  and $\nabla v_{n_2}^{(n_1)}\wsto \Gamma_{n_1}$ in $\cM(\Omega;\R{d\times N})$ as $n_2\to\infty$. In fact, the sequence $n_2\mapsto v_{n_2}^{(n_1)}$ satisfies
\begin{equation}\label{apprGamma}\nabla v_{n_2}^{(n_1)} = \Gamma_{n_1}.
	\end{equation}
Then the sequence 
\begin{equation}\label{defappr}(n_1,n_2)\mapsto u_{n_1,n_2}\coloneqq v_{n_2}^{(n_1)}
	\end{equation}
approximates $(g,G_1,G_2)$ in the sense of Definition~\ref{S000}.
Indeed, 
$$\lim_{n_1}\lim_{n_2} u_{n_1,n_2}=\lim_{n_1}\lim_{n_2} v_{n_2}^{(n_1)}=\lim_{n_1} \gamma_{n_1}=g \, \mbox{ in } L^1(\Omega;\R{d}),$$ 
which proves part (i).
Moreover, since $\gamma_{n_1}\in SBV(\Omega;\R{d})$ for all $n_1$ and
$$\lim_{n_1} \nabla \Big(\lim_{n_2} u_{n_1,n_2}\Big)=\lim_{n_1} \nabla \Big(\lim_{n_2} v_{n_2}^{(n_1)}\Big)=\lim_{n_1} \nabla \gamma_{n_1}=G_1 \,
\mbox{ weakly* in } \cM(\Omega;\R{d\times N}),$$
part (ii) is proved.
Finally,
$$\lim_{n_1} \lim_{n_2} \nabla u_{n_1,n_2}=\lim_{n_1} \lim_{n_2} \nabla v_{n_2}^{(n_1)}=\lim_{n_1} \Gamma_{n_1}=G_2 \, \mbox{ weakly* in } \cM(\Omega;\R{d\times N}),$$
which proves part (iii).
\end{proof}
\begin{remark}\label{casepRem}
\begin{itemize}
\item Notice that a sequence $\big((\gamma_{n_1},\Gamma_{n_1})\big)$ always exists: indeed, the existence of $\gamma_{n_1}$ is ensured by the Approximation Theorem \cite[Theorem~2.12]{CF1997}, whereas one can always take the constant sequence $\Gamma_{n_1}=G_2$.

\item If $p>1$, suppose that, in addition to the hypotheses of Corollary~\ref{S004}, we have that for every $n_1\in\N{}$, $(\gamma_{n_1},\Gamma_{n_1})\in SD_{1,p}(\Omega)$ and  $\Gamma_{n_1}\wto G_2$ weakly in $L^p(\Omega;\R{d\times N})$.
Then the constructed sequence $(n_1,n_2)\mapsto u_{n_1,n_2}$ satisfies condition (iii) of Definition~\ref{S000} weakly in $L^p(\Omega;\R{d\times N})$.
\end{itemize}
\end{remark}

In order to obtain the relaxation results in Section \ref{multi}, we state a variant of the global method for relaxation, suitable for the space $SD_{L,p}(\Omega)$,  that was obtained in \cite{BMZ2024}. We point out that the current version does not differ from \cite[Theorem 3.2]{BMZ2024} in the cases $L=1$ and/or $p=1$, since, in this setting, the function space $HSD^p_L(\Omega)$ therein coincides with $SD_{L,p}(\Omega)$.

Let $\mathcal F\colon SD_{L,p}(\Omega) \times\mathcal 
O(\Omega)\to [0, +\infty]$ be a functional satisfying the following hypotheses: 
\begin{enumerate}
	\item[(H1)] for every $(g, G_1, \dots, G_L) \in SD_{L,p}(\Omega)$, $\mathcal F(g, G_1,\dots, G_L;\cdot)$ is the restriction to 
	$\mathcal O(\Omega)$ of a Radon measure; 
	\item[(H2)] for every $O \in \mathcal O(\Omega)$,  
	$\mathcal F(\cdot, \cdot, \dots; O)$ is lower semicontinuous, in the following sense:
	if $(g, G_1,\dots, G_L) \in SD_{L,p}(\Omega)$ and $((g^n, G_1^n,\dots, G^n_L)) \subset SD_{L,p}(\Omega)$ are such that $g^n \to g$ in $L^1(\Omega;\mathbb R^{d})$ and  $G_i^n \wsto G_i$ in $\cM(\Omega;\R{d\times N})$, for $i=1,\dots, L$, then
	$$
	\mathcal F(g, G_1,\dots, G_L;O)\leq \liminf_{n}\mathcal F(g^n,G_1^n,\dots, G^n_L;O);
	$$
	\item[(H3)] for all $O \in \mathcal O(\Omega)$, $\mathcal F(\cdot, \ldots, \cdot;O)$ is local, that is, if $g= u$, $G_1= U_1$, $\dots$, $G_L=U_L$ a.e. in $O$, then 
	$\mathcal F(g, G_1,\dots, G_L;O)= \mathcal F(u, U_1,\dots, U_L;O)$;
	\item[(H4)] there exists a constant $C>0$ such that
	\begin{align*}
		&\frac{1}{C}\bigg(\sum_{\ell=1}^{L-1}\|G_\ell\|_{L^1(O;\mathbb R^{d \times N})} + 
			\|G_L\|^p_{L^p(O;\mathbb R^{d \times N})}+ |D g|(O)\bigg)\leq \cF(g, G_1,\dots, G_L;O)\\
		&\hspace{2cm}\leq
		C\left(\cL^N(O) + \sum_{\ell=1}^{L-1} \|G_\ell\|_{L^1(O;\mathbb R^{d \times N})} + \|G_L\|^p_{L^p(O;\mathbb R^{d \times N}) }+  |D g|(O)\right),
	\end{align*}
	for every $(g, G_1, \dots, G_L) \in SD_{L,p}(\Omega)$ and every $O \in \cO(\Omega)$.
\end{enumerate}

Given $(g, G_1, \dots, G_L)\in SD_{L,p}(\Omega)$ and 
$O \in \mathcal O(\Omega)$, we introduce the space of test functions 
\begin{align}
	\cC_{SD_{L,p}}(g, G_1, \dots, G_L; O)\coloneqq \left\{(u, U_1, \dots, U_L)\in SD_{L,p}(\Omega): u=g \hbox{ in a neighbourhood of } \partial O, \right. \nonumber\\
	\left.\int_O (G_i-U_i) \,\de x=0, i=1,\dots, L  \right\}, 	\label{classM}
\end{align}
and we let $m_{L,p} \colon SD_{L,p}(\Omega)\times \cO(\Omega)\to[0,+\infty)$ be the functional defined by
\begin{align}\label{mLdef}
	m_{L,p}(g, G_1,\dots, G_L;O):=\inf\Big\{\mathcal F(u, U_1,\dots, U_L; O): (u, U_1,\dots, U_L)\in \mathcal C_{SD_{L,p}}(g, G_1, \dots, G_L; O)\Big\}.
\end{align}

Then, the following result holds.
\begin{theorem}\label{globalmethod-new}
	Let $p \geq 1$ and let $\cF\colon SD_{L,p}(\Omega) \times\mathcal 
	O(\Omega)\to [0, +\infty]$ be a functional satisfying (H1)-(H4). 
	Then
	\begin{align*}
		\mathcal F(u, U_1,\dots, U_L;O)&= \int_O \!f_{L,p}(x,u(x), \nabla u(x), U_1(x),\dots, U_L(x))\,\de x \\
		&+\int_{O \cap S_u}\!\!\!\!\!\Phi_{L,p}(x, u^+(x), u^-(x),\nu_u(x)) 
		\,\de \mathcal H^{N-1}(x),\end{align*}
	where
	\begin{align}\label{fL}
		f_{L,p}(x_0, \alpha, \xi, B_1,\dots, B_L):=	\limsup_{\varepsilon \to 0^+}
		\frac{m_{L,p}(\alpha + a_{\xi,x_0}, B_1,\dots, B_L; Q(x_0,\varepsilon))}{\varepsilon^N},
	\end{align}
	\begin{align}\label{PhiL}
		\Phi_{L,p}(x_0, \lambda, \theta, \nu):= \limsup_{\varepsilon \to 0^+}\frac{m_{L,p}(s_{\lambda, \theta,\nu}(\cdot-x_0), 0,\dots, 0; Q_{\nu}(x_0,\varepsilon))}{\varepsilon ^{N-1}},
	\end{align}
	for a.e. $x_0\in \Omega$, for every $\alpha, \lambda, \theta \in \mathbb R^d$, $\xi, B_1,\dots, B_L \in \mathbb R^{d \times N}$, $\nu \in \mathbb S^{N-1}$,
	where $0$ is the zero matrix in $\mathbb R^{d \times N}$, $
		a_{\xi,x_0}(x)\coloneqq \xi(x - x_0)$ 
	 and 
	$s_{\lambda,\theta, \nu}$ is defined by \begin{equation}\label{snu}
		s_{\lambda,\theta,\nu}(x)\coloneqq 
		\begin{cases}
			\lambda & \text{if $x\cdot\nu\geq0$,} \\
			\theta & \text{if $x\cdot\nu<0$.}
		\end{cases}
	\end{equation}
\end{theorem}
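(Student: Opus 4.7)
The plan is to follow the global method for relaxation of \cite{BFM1998}, adapting it to the hierarchical setting in the spirit of \cite{BMZ2024}. Hypothesis (H1) makes $\cF(u,U_1,\ldots,U_L;\cdot)$ a Radon measure on $\Omega$, while (H4) controls it from above by $\cL^N+\sum_{\ell=1}^{L-1}|U_\ell|\,\cL^N+|U_L|^p\,\cL^N+|Du|$. Since $u\in SBV(\Omega;\R{d})$ the singular part of $Du$ is concentrated on $S_u$ and absolutely continuous with respect to $\cH^{N-1}\res S_u$, so the Radon-Nikodym theorem yields a decomposition
\begin{equation*}
\cF(u,U_1,\ldots,U_L;O)=\int_O\phi\,\de\cL^N+\int_{O\cap S_u}\psi\,\de\cH^{N-1},
\end{equation*}
and the task reduces to identifying $\phi(x_0)$ with $f_{L,p}(x_0,u(x_0),\nabla u(x_0),U_1(x_0),\ldots,U_L(x_0))$ at $\cL^N$-a.e.\ $x_0\in\Omega$ and $\psi(x_0)$ with $\Phi_{L,p}(x_0,u^+(x_0),u^-(x_0),\nu_u(x_0))$ at $\cH^{N-1}$-a.e.\ $x_0\in S_u$.

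Both densities are computed by a blow-up argument. For $\phi(x_0)$, I would fix a common Lebesgue point of $u,\nabla u,U_1,\ldots,U_L$ (and of $|U_L|^p$ when $p>1$) at which the Radon-Nikodym limit $\phi(x_0)=\lim_{\eps\to 0^+}\eps^{-N}\cF(u,U_1,\ldots,U_L;Q(x_0,\eps))$ exists. The inequality $\phi(x_0)\geq f_{L,p}(x_0,\ldots)$ is obtained by realizing $(u,U_1,\ldots,U_L)|_{Q(x_0,\eps)}$, after a thin boundary correction, as an admissible competitor in $\cC_{SD_{L,p}}(u(x_0)+a_{\nabla u(x_0),x_0},U_1(x_0),\ldots,U_L(x_0);Q(x_0,\eps))$, whose correction cost is $o(\eps^N)$ by (H4); the reverse inequality follows by pasting near-optimal competitors for $m_{L,p}$ into the ambient configuration and invoking the lower semicontinuity (H2). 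The density $\psi(x_0)$ is computed by the same philosophy on cubes $Q_\nu(x_0,\eps)$: at an $\cH^{N-1}$-a.e.\ jump point the bulk contributions from $\nabla u$, $U_1,\ldots,U_{L-1}$ and from the $L^p$-mass of $U_L$ are each $O(\eps^N)$ and hence negligible after division by $\eps^{N-1}$, so the target reduces to the pure jump profile $s_{u^+(x_0),u^-(x_0),\nu_u(x_0)}(\,\cdot\,-x_0)$ paired with vanishing $U_\ell$'s, which is exactly the one appearing in \eqref{PhiL}.

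The main difficulty lies in the gluing/correction step underlying both blow-ups: a near-optimal competitor $(\hat u_\eps,\hat U_{1,\eps},\ldots,\hat U_{L,\eps})$ must be merged with the ambient $(u,U_1,\ldots,U_L)$ while remaining in $\cC_{SD_{L,p}}$, in particular preserving the zero-average constraints $\int_O(\hat U_{\ell,\eps}-U_\ell)\,\de x=0$ at every level $\ell=1,\ldots,L$. I would resolve this via a classical boundary-layer construction: on a thin annulus near $\partial Q(x_0,\eps)$ (or $\partial Q_\nu(x_0,\eps)$) one adds level-$\ell$ corrector fields realizing the prescribed integral mean, produced from the deepest level $L$ upwards through the iteration of Theorem~\ref{Al} and Lemma~\ref{ctap} implicit in the proof of Corollary~\ref{S004}. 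Their $L^1$ (or $L^p$) mass is bounded by the mismatch of the traces and therefore vanishes after the $\eps^N$ (resp.\ $\eps^{N-1}$) rescaling, so neither the boundary condition nor the average constraint contribute to the limit, and the representation formulae \eqref{fL}-\eqref{PhiL} follow.
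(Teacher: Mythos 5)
Your proposal follows essentially the same route as the paper, which simply defers to the global method proof of \cite[Theorem 3.2]{BMZ2024} (itself an adaptation of \cite{BFM1998}): the Radon--Nikodym decomposition granted by (H1) and (H4), followed by blow-up identification of the bulk and surface densities with the cell formulas \eqref{fL}--\eqref{PhiL}, with (H2)--(H3) entering in the gluing and comparison steps exactly as you describe. One minor over-complication: since the fields $U_1,\dots,U_L$ enter the class $\cC_{SD_{L,p}}$ only through the integral constraints and are not required to be gradients, the averages can be matched by adding constant matrices at each level, with cost controlled by (H4) --- no Alberti-type corrector construction is needed --- which is precisely the observation the paper records when it notes that the fields $G_i$ ``behave independently of each other.''
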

\begin{proof} The proof is identical to the one in \cite[Theorem 3.2]{BMZ2024}, upon noticing that the fields $G_i, i = 1, \ldots L$ behave independently of each other.
\end{proof}

\begin{remark}\label{remtrasl}
	As in \cite[Remark 3.3]{BMZ2024}, it follows that if $\mathcal F$ is translation invariant in the first variable, i.e. if
	$$\mathcal F(u+b, U_1, \dots, U_L;O)= \mathcal F(u, U_1,\dots, U_L;O),$$ 
	for every $((u,U_1,\dots, U_l),O) \in SD_{L,p}(\Omega)\times \mathcal O(\Omega)$ and for every $b\in \mathbb R^d$,
	then the function $f_{L,p}$ in \eqref{fL} does not depend on $\alpha$ and the function $\Phi_{L,p}$ in \eqref{PhiL} does not depend separately on $\lambda$ and $\theta$ but only on the difference $\lambda- \theta$.
 With an abuse of notation
	we write
	$$f_{L,p}(x_0,\xi, B_1,\dots, B_L) = f_{L,p}(x_0, \alpha, \xi, B_1,\dots, B_L)  \quad \mbox{and}
	\quad \Phi_{L,p}(x_0, \lambda - \theta, \nu) = \Phi_{L,p}(x_0, \lambda, \theta, \nu).$$ 
\end{remark}

\section{A generalization of the relaxation theorem for $2$-level structured deformations}\label{sec_gen_CF}
In this section, we generalize some relaxation results already available for $2$-level structured deformations (\cite[Theorems~2.16 and~2.17]{CF1997}, \cite[Theorem~5.1]{MMOZ}, \cite[Theorems~4.1 and~4.2]{BMZ2024}) by weakening the hypotheses on the initial energy densities $W$ and $\psi$.

\begin{assumptions}[{see \cite[Definition~2.5]{BMMOZ}}]\label{gen_ass}
For $q\geq1$, we denote by $\cE\cD(q)$ the collection of pairs $(W,\psi)$ of bulk and surface energy densities $W\colon \Omega\times\R{d\times N} \to [0, +\infty)$ and $\psi\colon \Omega\times\R{d}\times \S{N-1} \to [0, +\infty)$ satisfying the following conditions 
\begin{enumerate}
\item \label{W_cara} $W$ is a Carath\'{e}odory function, namely it is measurable with respect to $x\in\Omega$ and continuous with respect to $A\in\R{d\times N}$;
\item \label{(W1)_p} there exists $C_W>0$ such that, for a.e. $x\in\Omega$ and $A_1,A_2 \in \R{d\times N}$,
\begin{equation*}
|W(x,A_1) - W(x,A_2)| \leq C_W|A_1 - A_2| \big(1+|A_1|^{q-1}+|A_2|^{q-1}\big);
\end{equation*}
\item \label{W1bdd}   there exists $A_0 \in \mathbb R^{d \times N}$ such that 
$W(\cdot, A_0)\in L^\infty(\Omega)$;
\color{black}
\item \label{Wcoerc} there exists $c_W >0$ such that
$$
W(x,A) \geq c_W |A|^q- \frac{1}{c_W}
$$
for a.e. $x \in \Omega$ and every $A \in \mathbb R^{d \times N}$.
\item \label{(psi-1)} $\psi$ is continuous with respect to all of its variables; 
\item \label{(psi0)} (symmetry) for every $x \in \Omega, \lambda \in \mathbb R^d, \nu \in \mathbb S^{N-1},$
$$
\psi(x,\lambda, \nu)=\psi(x, -\lambda, - \nu);
$$
\item \label{(psi1)} there exists $c_\psi,C_\psi > 0$ such that, for all $x\in\Omega$, $\lambda \in \R{d}$, and $\nu \in \S{N-1}$,
$$c_\psi|\lambda| \leq \psi(x,\lambda, \nu) \leq C_\psi|\lambda |;$$
\item \label{(psi2)} (positive $1$-homogeneity) for all $x\in\Omega$, $\lambda \in \R{d}$, $\nu \in \S{N-1}$, and $t >0$
$$\psi(x,t\lambda, \nu) = t\psi(x, \lambda, \nu);$$
\item \label{(psi3)} (sub-additivity) for all $x\in\Omega$, $\lambda_1,\lambda_2 \in \R{d}$, and $\nu \in \S{N-1}$,
\begin{equation*}
\psi(x, \lambda_1 + \lambda_2, \nu) \leq \psi(x,\lambda_1, \nu) +\psi(x,\lambda_2, \nu);
\end{equation*}
\item \label{(psi4)} there exists a continuous function $\omega_\psi\colon[0,+\infty)\to[0,+\infty)$ with $\omega_\psi(s)\to 0$ as $s\to0^+$ such that, for every $x_0,x\in\Omega$, $\lambda \in \R{d}$, and $\nu \in \S{N-1}$,
\begin{equation*}
|\psi(x,\lambda,\nu)-\psi(x_0,\lambda,\nu)|\leq\omega_\psi(|x-x_0|)|\lambda|.
\end{equation*}
\end{enumerate}
\end{assumptions}

Notice that conditions  \eqref{(psi1)} and \eqref{(psi3)} imply that~$\psi$ is Lipschitz continuous in the second variable, namely, for all $x\in\Omega$, $\lambda_1,\lambda_2\in\R{d}$, and $\nu\in\mathbb{S}^{N-1}$
\begin{equation}\label{psi_Lip}
|\psi(x,\lambda_1,\nu)-\psi(x,\lambda_2,\nu)|\leq C_\psi|\lambda_1-\lambda_2|
\end{equation}
(where $C_\psi$ is the same constant of property \eqref{(psi1)}).

For $(W,\psi)\in \cE\cD(p)$, for some $p\geq1$, we consider the initial energy of a deformation $u\in SBV(\Omega;\mathbb R^d)$ defined by
\begin{equation}\label{103}
E(u)\coloneqq \int_\Omega W(x,\nabla u(x))\,\de x+\int_{\Omega\cap S_u} \psi(x,[u](x),\nu_u(x))\,\de\cH^{N-1}(x),
\end{equation}
and, following \cite{CF1997}, we assign an energy to a ($2$-level) structured deformation $(g,G)\in SD_{1,p}(\Omega)$, via
\begin{equation}\label{102}
	I_{1,p}(g,G)\coloneqq \inf\Big\{\liminf_{n\to\infty} E(u_n): u_n \in SBV(\Omega;\R{d}), u_n\weakstH(g,G)\Big\}.
\end{equation}

We introduce some notations which will be used throughout the discussion.
For $x_0 \in \Omega$ and $A\in\R{d\times N}$, we define the affine and linear functions of gradient~$A$ by
\begin{equation}\label{affine}
a_{A,x_0}(x)\coloneqq A(x - x_0)\qquad\text{and}\qquad \ell_A(x)\coloneqq Ax, 
\end{equation} 
respectively.
For $p\geq1$ and $A,B\in\R{d\times N}$, we let
\begin{equation}\label{T001}
	\cC^{\bulk}_p(A,B)\coloneqq \bigg\{u\in SBV(Q;\R{d}) :  u=\ell_A 
	\hbox{ in a neighborhood of } \partial Q, 
	 \int_Q \nabla u\,\de x=B, |\nabla u|\in L^p(Q) \bigg\},
\end{equation}
and, for $\lambda\in\R{d}$ and $\nu\in\S{N-1}$, we let
\begin{equation}\label{T002}
	\cC^\surface(\lambda,\nu)\coloneqq \bigg\{u\in SBV(Q_\nu;\R{d}): u=s_{\lambda,0,\nu} \hbox{ in a neighborhood of }\partial Q_{\nu}\,, \int_{Q_\nu}\nabla u\,\de x=0	\bigg\},
\end{equation}
where, the function $s_{\lambda,0, \nu}$ is defined in \eqref{snu}.
For $\eps>0$ and $O\in\mathcal{O}(\Omega)$, we define the following localizations of the energy~$E$ in~\eqref{103} 
\begin{subequations}\label{Erelax}
\begin{eqnarray}
E_{x,\eps}^\surface(u;O) & \!\!\!\!\coloneqq & \!\!\!\!\!\! \int_{S_u\cap O} \psi(x+\eps y,[u](y),\nu_u(y))\,\de\cH^{N-1}(y), \label{Erelaxs} \\
E_{x,\eps}^{\bulk}(u;O) & \!\!\!\!\coloneqq & \!\!\!\!\!\! \int_{O} W(x+\eps y,\nabla u(y))\,\de y + E_{x,\eps}^\surface(u;O), \label{Erelaxb}\\
\widetilde{E}_{x,\eps}(u;O) &\!\!\!\!\coloneqq &\!\!\!\!  \int_{O} \eps\, W(x+\eps y,\eps^{-1}\nabla u(y))\,\de y + E_{x,\eps}^\surface(u;O);\label{Erelaxrec}
	\end{eqnarray}
\end{subequations}
for a.e.~$x\in\Omega$ and $A,B\in\R{d\times N}$, we define
\begin{equation}\label{906}
	H_{1,p}(x,A,B)\coloneqq \limsup_{\eps\to0}\inf\Big\{E_{x,\eps}^{\bulk}(u;Q): u\in\cC_p^\bulk(A,B)\Big\},
\end{equation}
and for a.e.~$x\in\Omega$, $\lambda\in\R{d}$, and $\nu\in\S{N-1}$, we define 
\begin{equation}\label{907}
	h_{1,p}
	(x,\lambda,\nu)\coloneqq \limsup_{\eps\to0} \inf\Big\{ \delta_1(p) \widetilde{E}_{x,\eps}(u;Q_\nu)+(1-\delta_1(p)) 
	E_{x,\eps}^\surface(u;Q_\nu): u\in\cC
	^\surface(\lambda,\nu)\Big\}.
\end{equation}
\begin{remark}\label{rem3.2}
We comment on the available literature concerning the energy $I_{1,p}$ defined in~\eqref{102}.
\begin{itemize}
\item[(a)] In the case of~$W$ and~$\psi$ independent of the $x$-variable, an integral representation result for the relaxed energy $I_{1,p}$ defined in~\eqref{102} was proved in \cite[Theorems~2.16 and~2.17]{CF1997}, where the relaxed bulk and surface energy densities $(A,B)\mapsto H_{1,p}(A,B)$ and $(\lambda,\nu)\mapsto h_{1,p}(\lambda,\nu)$ are obtained by solving the infimization problems in \eqref{906} and \eqref{907} (with the obvious modifications), namely
$$I_{1,p}(g,G)=\int_\Omega H_{1,p}(\nabla g(x),G(x))\,\de x+\int_{\Omega\cap S_g} h_{1,p}([g](x),\nu_g(x))\,\de\cH^{N-1}(x),$$ 
compare with \cite[Theorem~2.16 and formula (2.15)]{CF1997}\footnote{In \cite[formula (2.15)]{CF1997}, the dependence on the normal in the relaxed surface energy density (see the formula for~$h$, \cite[formula (2.17)]{CF1997}) was mistakenly omitted, as already noted in \cite[Theorem 3]{OP2015} and \cite[formula (4)]{S17}.}.

\item[(b)] In the case of $W$ continuous in the~$x$ variable with modulus of continuity $\omega_W$, an integral representation result for the relaxed energy $I_{1,p}$ defined in~\eqref{102} is presented in \cite[Theorem~3.2]{book}, where the relaxed bulk and energy densities~$H_{1,p}$ and~$h_{1,p}$ are obtained by solving the infimization problems in \eqref{906} and \eqref{907} for $\eps=0$, relying on techniques from~\cite{BBBF1996} to deal with the $x$-dependence. 
The integral representation reads
$$I_{1,p}(g,G)=\int_\Omega H_{1,p}(x,\nabla g(x),G(x))\,\de x+\int_{\Omega\cap S_g} h_{1,p}(x,[g](x),\nu_g(x))\,\de\cH^{N-1}(x);$$ 
compare with \cite[formula (3.16)]{book}.

\item[(c)] In the present case, we will resort to the global method for relaxation of~\cite{BFM1998} because of the weaker assumptions on~$W$, extending its formulation to the context of hierarchical structured deformations considered in~\cite{BMZ2024}.
\end{itemize}

We comment on formulas~\eqref{Erelax} and on the minimization problems~\eqref{906} and~\eqref{907}.
\begin{itemize}
\item[(d)] We notice that, thanks to~\eqref{(psi4)} in Assumptions~\ref{gen_ass}, the dependence of $E_{x,\eps}^{\surface}$ on $\eps$ can be dropped: by using the techniques in~\cite{BBBF1996}, it is easy to see one can replace $E_{x,\eps}^{\surface}$ by $E_{x,0}^{\surface}\eqqcolon E_x^{\surface}$ in \eqref{907}, so that, since the integration variable in~\eqref{Erelaxs} is~$y$, the first entry of $\psi$ can be frozen and therefore considered illusory for the minimization process; hence, formulas~\eqref{Erelax} become
\begin{subequations}\label{Erelax1}
\begin{eqnarray}
E_{x}^\surface(u;O) & \!\!\!\!\coloneqq & \!\!\!\!\!\! \int_{S_u\cap O} \psi(x,[u](y),\nu_u(y))\,\de\cH^{N-1}(y), \label{Erelax1s} \\
E_{x,\eps}^{\bulk}(u;O) & \!\!\!\!\coloneqq & \!\!\!\!\!\! \int_{O} W(x+\eps y,\nabla u(y))\,\de y + E_{x}^\surface(u;O),\label{Erelax1b}\\
\widetilde{E}_{x,\eps}(u;O) &\!\!\!\!\coloneqq &\!\!\!\!  \int_{O} \eps\, W(x+\eps y,\eps^{-1}\nabla u(y))\,\de y + E_{x}^\surface(u;O).\label{Erelax1rec}
	\end{eqnarray}
\end{subequations}

\item[(e)] In view of (d) above, we notice that, if $p>1$, the minimization problem in \eqref{907} reduces to 
$$h_{1,p}	(x,\lambda,\nu)= \limsup_{\eps\to0} \inf\big\{E_{x,\eps}^\surface(u;Q_\nu): u\in\cC^\surface(\lambda,\nu)\big\} = \inf\big\{E_{x}^\surface(u;Q_\nu): u\in\cC^\surface(\lambda,\nu)\big\}.$$
This enlarges the class originally considered in \cite[formula (2.17)]{CF1997}, where the class $\cC^{\surface}(\lambda,\nu)$ was defined with the condition $\nabla u=0$ a.e.~in $Q_{\nu}$ (instead of $\int_{Q_\nu} \nabla u\,\de x=0$). 
The equivalence of the minimization problems in these two different classes was obtained in \cite{nogap} and \cite{S17} as a byproduct of relaxation, whereas it was proved as a property of the energy in~\cite{EKM} for a general class of $x$-independent surface energy densities. 
Recalling again (d) above, this is meaningful in our context because our initial surface energy densities $\psi$ can be considered independent of $x$, owing to~\eqref{(psi4)} in Assumptions~\ref{gen_ass}.

\item[(f)] In the case of $W$ continuous in the~$x$ variable with modulus of continuity $\omega_W$, also formula~\eqref{Erelax1b} has no $\eps$-dependence, and reads
\begin{equation}\label{Erelax_b_good}
E_{x}^{\bulk}(u;O) =\int_{O} W(x,\nabla u(y))\,\de y + E_{x}^\surface(u;O);
\end{equation}
moreover, if there exist $0<\alpha <1$, $C>0$ such that
$$
\left|W^\infty(x,A)- \frac{W(x,tA)}{t}\right|\leq \frac{C |A|^{1-\alpha}}{t^\alpha}\quad\text{for every $x \in \Omega$, for every $t>0$ such that $|tA|\geq 1$,}
$$ 
where $W^\infty$ is the recession function of $W$, defined by $W^\infty(x,A)\coloneqq \limsup_{t\to+\infty} t^{-1}W(x,tA)$, then the localized energy $\widetilde{E}_{x,\eps}$ in~\eqref{Erelax1rec} becomes
\begin{equation}\label{Erelaxrec_really}
E_{x}^\rec(u;O)\coloneqq \int_{O}  W^\infty(x,\nabla u(y))\,\de y + E_{x}^\surface(u;O).
\end{equation}
\item[(g)] In view of point (f), formulas~\eqref{906} and~\eqref{907} reduce to
\begin{equation*}
\begin{split}
H_{1,p}(x,A,B) =&\, \inf\Big\{E_{x}^{\bulk}(u;Q): u\in\cC_p^\bulk(A,B)\Big\},\\
h_{1,p}(x,\lambda,\nu) =&\, \inf\Big\{ \delta_1(p) E_{x}^{\rec}(u;Q_\nu)+(1-\delta_1(p)) E_{x}^\surface(u;Q_\nu): u\in\cC^\surface(\lambda,\nu)\Big\},
\end{split}
\end{equation*}
which are those in~\cite[formulas (3.15)]{book}.
\end{itemize}
\end{remark}

\smallskip

Given $(g, G)\in SD_{1,p}(\Omega)$ and 
$O \in \cO(\Omega)$, we introduce the class (of competitors) 
\begin{align}\label{testf}
	\mathcal C_{SD_{1,p}}(g, G; O) \coloneqq \bigg\{(u, U)\in SD_{1,p} (\Omega): u=g \hbox{ in a neighbourhood of } \partial O, 	\int_O (G-U) \,\de x=0 \bigg\},
\end{align}
and, with a slight abuse of notation, we let  
$m_{1,p}\colon SD_{1,p}(\Omega)\times \cO(\Omega)\to[0,+\infty)$ be the functional defined by
\begin{align}
	m_{1,p}(g, G;O) \coloneqq \inf\Big\{ I_{1,p}(u, U; O): (u, U)\in \mathcal C_{SD_{1,p}}(g, G; O)\Big\},
\end{align}
where $I_{1,p}(\cdot,\cdot;O)$ is the localization of the functional in~\eqref{102} to $O\in\cO(\Omega)$, i.e. $m_{1,p}$ is the functional defined in \eqref{mLdef}, for $L=1$ and associated to $\mathcal F=I_{1,p}$.

The following integral representation theorem is a consequence of \cite[Theorem 3.2]{BMZ2024}.
\begin{theorem}\label{thm_approx_CF}
Let $p\geq1$ and let $(W,\psi)\in\cE\cD(p)$; let $(g,G)\in SD_{1,p}(\Omega)$ and let $I_{1,p}(g,G)$ be defined by~\eqref{102}. 
Then $I_{1,p}$ admits the integral representation 
\begin{equation}\label{int_rep_I}
I_{1,p}(g,G)=\int_\Omega H_{1,p}(x,\nabla g(x), G(x))\,\de x+\int_{\Omega\cap S_g} h_{1,p}(x,[g](x),\nu_g(x))\,\de \cH^{N-1}(x),
\end{equation}
where $H_{1,p}$ and $h_{1,p}$ are defined in~\eqref{906} and~\eqref{907}, respectively.
\end{theorem}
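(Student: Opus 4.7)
The plan is to apply the global method for relaxation (Theorem~\ref{globalmethod-new}) with $L=1$ to the localization of $I_{1,p}$ on $\cO(\Omega)$, and then identify the resulting abstract densities with the explicit cell formulas $H_{1,p}$ and $h_{1,p}$ from \eqref{906}--\eqref{907}. Concretely, I would set
\[
\mathcal{F}(g,G;O)\coloneqq\inf\Big\{\liminf_{n\to\infty} E(u_n;O):\ u_n\weakstH(g,G)\text{ on }O\Big\},
\]
with $E(\,\cdot\,;O)$ the natural localization of \eqref{103}, and verify (H1)--(H4) for $\mathcal{F}$. Locality (H3) is immediate. The growth bound (H4) combines the coercivity and boundedness \eqref{Wcoerc}--\eqref{W1bdd} of $W$, the two-sided estimate \eqref{(psi1)} on $\psi$, and $|Dg|(O)\leq\int_O|\nabla g|\,\de x+\int_{O\cap S_g}|[g]|\,\de\cH^{N-1}$, with the upper bound realised by the Approximation Theorem (case $L=1$ of Theorem~\ref{appTHMh}). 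Lower semicontinuity (H2) follows from a standard diagonal argument. The measure property (H1) is the most delicate: via the De~Giorgi--Letta criterion, superadditivity on disjoint opens is immediate from the integral form of $E$, while subadditivity on overlapping opens is obtained by gluing two almost-optimal sequences on a thin annular region by means of cut-off functions, with the extra interface energy controlled by \eqref{(W1)_p} and by \eqref{(psi1)}, \eqref{(psi3)}, \eqref{(psi4)}.

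Once (H1)--(H4) are in place, Theorem~\ref{globalmethod-new} with $L=1$, together with Remark~\ref{remtrasl} (since $E$ is manifestly invariant under $u\mapsto u+b$), yields the integral representation of $\mathcal{F}$ with densities $f_{1,p}(x,\xi,B)$ and $\Phi_{1,p}(x,\lambda,\nu)$ given by the abstract $\limsup$ cell formulas \eqref{fL}--\eqref{PhiL}. To identify $f_{1,p}\equiv H_{1,p}$ I would rescale: for fixed $x_0,\xi,B$ and small $\eps>0$, the change of variables $y=(x-x_0)/\eps$ maps a nearly optimal competitor $v$ for $m_{1,p}(\alpha+a_{\xi,x_0},B;Q(x_0,\eps))$ to $u(y)\coloneqq\eps^{-1}\bigl(v(x_0+\eps y)-\alpha\bigr)\in\cC_p^{\bulk}(\xi,B)$, and $\eps^{-N}\mathcal{F}(v,V;Q(x_0,\eps))$ becomes $E_{x_0,\eps}^{\bulk}(u;Q)$ up to an error that vanishes as $\eps\to0$ thanks to the Carath\'eodory continuity of $W$ and \eqref{(psi4)}; the reverse inequality follows by rescaling a cell competitor, modifying it near $\partial Q(x_0,\eps)$ to match the affine boundary datum and using Lemma~\ref{ctap} to restore the average-gradient constraint at negligible cost. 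The surface identification $\Phi_{1,p}\equiv h_{1,p}$ proceeds analogously: for $p>1$ the bulk contribution disappears in the cube-shrinking blow-up (matching $\delta_1(p)=0$ in \eqref{907}), while for $p=1$ the approximating gradients are of order $\eps^{-1}$ and the bulk term rescales precisely into the recession-type functional $\widetilde{E}_{x,\eps}$ in \eqref{Erelaxrec}.

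The principal obstacle is the identification step under the weak regularity on $W$ adopted here. Because $W$ is only Carath\'eodory in $x$, one cannot simply freeze $x_0$ inside the shrinking cells, and the $\eps$-dependence of $W(x_0+\eps y,\cdot)$ must be absorbed via a Scorza-Dragoni truncation combined with the $p$-growth Lipschitz estimate \eqref{(W1)_p}. The genuinely subtle case is the surface blow-up for $p=1$, where the gradients of competitors diverge like $\eps^{-1}$ and one must verify that the abstract $\limsup$ formula produced by Theorem~\ref{globalmethod-new} collapses to the recession-type cell problem \eqref{907}; here the interplay between \eqref{(W1)_p} and \eqref{Wcoerc} is essential, as it prevents the Lipschitz modulus and the coercive lower bound from degenerating on the large-gradient regime where the rescaled integrand lives.
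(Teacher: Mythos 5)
Your overall architecture --- localize $I_{1,p}$, verify (H1)--(H4), apply Theorem~\ref{globalmethod-new} with $L=1$ together with Remark~\ref{remtrasl}, then identify the abstract densities with \eqref{906}--\eqref{907} --- is exactly the paper's strategy (the paper outsources (H1)--(H4) and the abstract representation to \cite[Theorem 4.1]{BMZ2024} rather than re-proving them). The genuine gap is in your identification of $f_{1,p}$ with $H_{1,p}$. A nearly optimal competitor for $m_{1,p}(\alpha+a_{\xi,x_0},B;Q(x_0,\eps))$ is a \emph{pair} $(v,V)\in SD_{1,p}(\Omega)$ subject only to $v=\alpha+a_{\xi,x_0}$ near $\partial Q(x_0,\eps)$ and $\int (B-V)\,\de x=0$, and the quantity being infimized is $\mathcal F(v,V;Q(x_0,\eps))=I_{1,p}(v,V;Q(x_0,\eps))$, which is itself a relaxed functional, not the energy $E$ of a single deformation. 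Consequently the rescaling $u(y)=\eps^{-1}\bigl(v(x_0+\eps y)-\alpha\bigr)$ neither lands in $\cC^{\bulk}_p(\xi,B)$ (the average-gradient constraint is imposed on $V$, not on $\nabla v$) nor converts $\eps^{-N}\mathcal F(v,V;\cdot)$ into $E^{\bulk}_{x_0,\eps}(u;Q)$. Symmetrically, your ``reverse inequality'' modifies a cell competitor near the boundary, but elements of $\cC^{\bulk}_p(\xi,B)$ already satisfy the affine boundary condition by definition; the direction that genuinely requires boundary matching and Lemma~\ref{ctap} is the passage from an approximating sequence realizing the relaxed value on affine data (which satisfies neither the Dirichlet condition nor the exact average-gradient constraint) to admissible cell competitors.

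The paper closes this gap with two inputs you do not invoke. First, \cite[Theorems 3.6 and 4.1]{BMZ2024} give $m_{1,p}(a_{A,x_0},B;Q(x_0,\eps))=I_{1,p}(a_{A,x_0},B;Q(x_0,\eps))$, so that after a change of variables $f_{1,p}$ coincides with the sequential-relaxation cell formula $\widetilde H_{1,p}$ (an $\inf\liminf$ over sequences $u_n\to\ell_A$ with $\nabla u_n\wsto B$, carrying no Dirichlet condition). Second, the two inequalities $H_{1,p}\leq\widetilde H_{1,p}$ (the cut-and-paste argument of \cite[Step 2 of Proposition 3.1]{CF1997}, which is where Lemma~\ref{ctap} and the boundary modification actually enter) and $\widetilde H_{1,p}\leq H_{1,p}$ (because any $u\in\cC^{\bulk}_p(A,B)$, suitably rescaled, yields the admissible pair $(u,\nabla u)$ for $m_{1,p}$, whence $m_{1,p}\leq\widehat m^{\bulk}_{1,p}$) then give \eqref{clean}; the surface identification for $p=1$ follows the same two-sided scheme. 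Finally, your appeal to Scorza--Dragoni to freeze $x_0$ is unnecessary: formulas \eqref{906}--\eqref{907} deliberately keep $W(x_0+\eps y,\cdot)$ inside the cell problem and take $\limsup_{\eps\to0}$ outside, so no freezing of the spatial variable is ever performed.
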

\begin{proof}
By \cite[Theorem 4.1]{BMZ2024}, the localized version $\mathcal{O}(\Omega)\ni O\mapsto I_{1,p}(g,G;O)$ of the functional $I_{1,p}$ satisfies all the properties in \cite[Theorem 3.2]{BMZ2024}. In particular, it is the restriction to $\mathcal{O}(\Omega)$ of a Radon measure which is absolutely continuous with respect to $\mathcal L^N+ |Dg|+ |G|$ and it admits the following representation
\begin{align*}
I_{1,p}(u, U;O)= \int_O  f_{1,p}(x, u(x), \nabla u(x), U(x))\,\de x +\int_{O \cap S_u} \Phi_{1,p}(x, u^+(x), u^-(x),\nu_u(x)) 
	\,\de \mathcal H^{N-1}(x),\end{align*}
where, for all $x_0\in \Omega$, $\alpha, \theta,\lambda \in \mathbb R^d$, $A, B\in \mathbb R^{d \times N}$, and $\nu \in \mathbb S^{N-1}$, 
\begin{subequations}
\begin{eqnarray}
f_{1,p}(x_0,\alpha, A, B) &\!\!\!\!\coloneqq&\!\!\!\! \limsup_{\varepsilon \to 0}\frac{m_{1,p}(\alpha + a_{A,x_0}, B; Q(x_0,\varepsilon))}{\varepsilon^N}, \\ 
\Phi_{1,p}(x_0, \lambda, \theta, \nu) &\!\!\!\!\coloneqq&\!\!\!\! \limsup_{\varepsilon \to 0}\frac{m_{1,p}(s_{\lambda, \theta,\nu}(\cdot-x_0), 0; Q_{\nu}(x_0,\varepsilon))}{\varepsilon ^{N-1}}, \label{Phi_temp}
\end{eqnarray}
\end{subequations}
where $a_{A,x_0}$ and $s_{\lambda,\theta,\nu}$ are defined in \eqref{affine} and \eqref{snu}, respectively, and $0$ in~\eqref{Phi_temp} is the zero matrix in $\mathbb R^{d \times N}$. 

It is easy to deduce (see, e.g., \cite[Remark~3.3 and proof of Theorem 4.1]{BMZ2024}) that the functional $I_{1,p}$ defined in \eqref{102} is translation invariant in the first variable, which entails that it can be represented as 
\begin{align*}
I_{1,p}(u, U;O)= \int_O  f_{1,p}(x, \nabla u(x), U(x))\,\de x +\int_{O \cap S_u} \Phi_{1,p}(x, [u](x),\nu_u(x)) \,\de \mathcal H^{N-1}(x),
\end{align*}
where (with a slight abuse of notation)
\begin{subequations}
\begin{eqnarray}
f_{1,p}(x_0, A, B) &\!\!\!\!\coloneqq&\!\!\!\! \limsup_{\varepsilon \to 0}\frac{m_{1,p}( a_{A,x_0}, B; Q(x_0,\varepsilon))}{\varepsilon^N}, \label{f3.2}\\
\Phi_{1,p}(x_0, \lambda, \nu) &\!\!\!\!\coloneqq&\!\!\!\! \limsup_{\varepsilon \to 0}\frac{m_{1,p}(s_{\lambda, 0,\nu}(\cdot-x_0), 0; Q_{\nu}(x_0,\varepsilon))}{\varepsilon ^{N-1}}. \label{Phi}
\end{eqnarray}
\end{subequations}

We now need to prove that, for all $x_0\in\Omega$, $A,B\in\R{d\times N}$, $\lambda\in\R{d}$, and $\nu\in\mathbb{S}^{N-1}$, there holds $f_{1,p}(x_0,A,B)=H_{1,p}(x_0,A,B)$ and that $\Phi_{1,p}(x_0,\lambda,\nu)=h_{1,p}(x_0,\lambda,\nu)$.

\smallskip

\noindent\emph{Step 1 -- the bulk energy density.} We claim that for a.~e.~$x_0\in\Omega$ and for every $A,B\in\R{d\times N}$, 
\begin{equation}\label{clean}
f_{1,p}(x_0,A,B)=H_{1,p}(x_0,A,B)=\widetilde{H}_{1,p}(x_0,A,B),
\end{equation}
where 
\begin{align*}
\widetilde{H}_{1,p}(x_0,A,B) \coloneqq& \limsup_{\varepsilon \to 0} \inf\Big\{\liminf_{n\to \infty}
E^\bulk_{x_0,\eps}(u_n;Q) : \{u_n\} \subset SBV(Q;\mathbb R^d), u_n \to \ell_A  \hbox{ in } L^1(Q;\mathbb R^d),\\
&\phantom{=\limsup_{\varepsilon \to 0^+} \inf\Big\{\liminf_{n\to \infty}
E^\bulk_{x_0,\eps}(u_n;Q):} 
\nabla u_n \wsto B \mbox{ in } \cM(Q;\mathbb R^{d\times N})\Big\}
\end{align*}
(notice that, if $p>1$, in view of the fact that $(W,\psi)\in\ED(p)$, the convergence $\nabla u_n\wsto B$ is improved to $\nabla u_n\wto B$ weakly in $L^p(\Omega;\R{d\times N})$).

The proof that $H_{1,p}(x_0,A,B) \leq \widetilde{H}_{1,p}(x_0,A,B)$ can be obtained as in \cite[Step 2 in Proposition 3.1]{CF1997}, first fixing $\varepsilon$ and then letting $\varepsilon \to 0$.  A careful inspection of the proofs of \cite[Lemma 2.21 and Step 2 of Proposition 3.1]{CF1997} shows both that the $x$-dependence is not an issue and that the admissible functions used in the definition of the bulk energy density in \cite[equation (2.16)]{CF1997}) belong to $\cC^\bulk_p(A,B)$.

Moreover, by \cite[Theorems 3.6 and  4.1]{BMZ2024}, the relaxed bulk energy density $f_{1,p}$ in \eqref{f3.2}, is given by
\begin{equation}\label{fHtilde}
f_{1,p}(x_0,A,B)= \limsup_{\varepsilon \to 0} \frac{m_{1,p}(a_{A,x_0}, B;Q(x_0,\varepsilon))}{\varepsilon^N} = \limsup_{\varepsilon \to 0}  \frac{I_{1,p}(a_{A,x_0}, B;Q(x_0,\varepsilon))}{\varepsilon^N} 
\end{equation}
for a.e.~$x_0 \in \Omega$ and every $A, B \in \mathbb R^{d \times N}$.
By a simple change of variables argument, invoking property \eqref{(psi2)}, it follows that, for a.e.~$x_0\in \Omega$ and every $A,B \in \mathbb R^{d \times N}$,
\begin{equation}\label{Htilde}
\limsup_{\varepsilon \to 0} \frac{I_{1,p}(a_{A,x_0}, B;Q(x_0,\varepsilon))}{\varepsilon^N} = \widetilde{H}_{1,p}(x_0,A,B).
\end{equation}

Now define $\widehat{m}_{1,p}^{\bulk}(A,B;Q(x_0,\varepsilon))\coloneqq \inf\big\{E(u;Q(x_0,\varepsilon)): \varepsilon^{-1}u(x_0 + \varepsilon \cdot) \in \mathcal C^\bulk_p(A,B)\big\}$.
It is easy to verify that for a.e.~$x_0\in \Omega$ and every $A,B \in \mathbb R^{d\times N}$ the following inequality holds true
$$m_{1,p}(a_{A,x_0},B;Q(x_0,\varepsilon))\leq \widehat{m}_{1,p}^{\bulk}(A,B;Q(x_0,\varepsilon));
$$
indeed, for any function $u$ that is admissible for $\widehat{m}_{1,p}^{\bulk}(A,B;Q(x_0,\varepsilon))$, 
the pair $(u,\nabla u)$ is a competitor for $m_{1,p}(a_{A,x_0},B;Q(x_0,\varepsilon))$.
Consequently, by \eqref{Htilde} and \eqref{fHtilde},
\begin{align*}
{\widetilde H}_{1,p}(x_0,A,B) &= \limsup_{\varepsilon \to 0}  \frac{I_{1,p}(a_{A,x_0}, B;Q(x_0,\varepsilon))}{\varepsilon^N} =\limsup_{\varepsilon \to 0} \frac{m_{1,p}(a_{A,x_0}, B;Q(x_0,\varepsilon))}{\varepsilon^N}\\
&\leq \limsup_{\varepsilon \to 0} \frac{\widehat{m}_{1,p}^{\bulk}(A, B;Q(x_0,\varepsilon))}{\varepsilon^N}=H_{1,p}(x_0,A,B),
\end{align*}
where in the last equality we used again a change of variables argument; therefore~\eqref{clean} is proved.

\smallskip

\noindent\emph{Step 2 -- the surface energy density.}
The equality $\Phi_{1,p}(x_0,\lambda,\nu)=h_{1,p}(x_0,\lambda,\nu)$ for $p>1$ is already contained in the proof of \cite[Theorem 4.1]{BMZ2024}, so that we only have to prove it for the case $p=1$.
We claim that for a.~e.~$x_0\in\Omega$ and for every $\lambda\in\R{d}$ and $\nu\in\mathbb{S}^{N-1}$, 
\begin{equation}\label{clean2}
\Phi_{1,1}(x_0,\lambda,\nu)=h_{1,1}(x_0,A,B)=\tilde{h}_{1,1}(x_0,A,B),
\end{equation}
where  
\begin{align*}
\tilde{h}_{1,1}(x_0,A,B) \coloneqq& \limsup_{\varepsilon \to 0} \inf\Big\{\liminf_{n\to \infty}
\widetilde{E}_{x_0,\eps}(u_n;Q_\nu) : 
\{u_n\} \subset SBV(Q_\nu;\mathbb R^d), u_n \to s_{\lambda,0,\nu}  \hbox{ in } L^1(Q_\nu;\mathbb R^d),\\
&\phantom{=\limsup_{\varepsilon \to 0^+} \inf\Big\{\liminf_{n\to \infty}
E^\bulk_{x_0,\eps}(u_n;Q):} 
\nabla u_n \wsto 0 \mbox{ in } \cM(Q_\nu;\mathbb R^{d\times N})\Big\}.
\end{align*}
The inequality $h_{1,1}(x_0,\lambda,\nu)\leq \tilde{h}_{1,1}(x_0,\lambda,\nu)$ is obtained as in Step 1 above.
For every $x_0\in \Omega$, $\lambda \in \mathbb R^{d}$, and $\nu \in \mathbb S^{N-1}$, we have
\begin{eqnarray}
&&\!\!\!\! \Phi_{1,1}(x_0,\lambda,\nu) = 
\limsup_{\varepsilon \to 0}\frac{m_{1,1}(s_{\lambda,0,\nu}(\cdot-x_0), 0; Q_{\nu}(x_0,\varepsilon))}{\varepsilon ^{N-1}}= \limsup_{\varepsilon \to 0}\frac{I_{1,1}(s_{\lambda,0,\nu}(\cdot-x_0), 0; Q_{\nu}(x_0,\varepsilon))}{\varepsilon ^{N-1}} \nonumber\\
&\!\!\!\!=&\!\!\!\! 
\limsup_{\varepsilon \to 0}\frac{1}{\varepsilon^{N-1}}
\inf\Bigg\{\liminf_{n\to\infty}\bigg[\int_{Q_\nu(x_0,\varepsilon)}  W(x,\nabla u_n(x)) \, \de x + \int_{Q_\nu(x_0,\varepsilon)\cap S_{u_n}} \psi(x, [u_n](x),\nu_{u_n}(x)) \,\de\mathcal H^{N-1}(x)\bigg] : \nonumber\\
&\!\!\!\!&\!\!\!\!
\phantom{\limsup_{\varepsilon \to 0}\frac{1}{\varepsilon^{N-1}} \inf\Bigg\{} \text{$u_n \in SBV(Q_\nu(x_0,\varepsilon);\mathbb R^d)$,  $u_n \to s_{\lambda,0,\nu}(\cdot-x_0)$ in $L^1(Q_\nu(x_0,\varepsilon);\mathbb R^d)$,} \nonumber\\
&\!\!\!\!&\!\!\!\!
\phantom{\limsup_{\varepsilon \to 0}\frac{1}{\varepsilon^{N-1}} \inf\Bigg\{} \text{$\nabla u_n \wsto 0$ in $\cM(Q_\nu(x_0,\varepsilon);\mathbb R^{d\times N})$} \Bigg\} \nonumber\\
&\!\!\!\!=&\!\!\!\! 
\limsup_{\varepsilon \to 0}
\inf\Bigg\{\liminf_{n\to\infty}\bigg[ \eps \int_{Q_\nu}  W(x_0+\eps y,\nabla u_n(x_0+\eps y)) \, \de y \nonumber\\
&\!\!\!\!&\!\!\!\!
\phantom{\limsup_{\varepsilon \to 0} \inf\Bigg\{\liminf_{n\to\infty}\bigg[}+ \int_{Q_\nu \cap \eps^{-1}(S_{u_n}-x_0)} \psi(x_0+\eps y, [u_n](x_0+\eps y),\nu_{u_n}(x_0+\eps y)) \,\de\mathcal H^{N-1}(y)\bigg] : \nonumber\\
&\!\!\!\!&\!\!\!\!
\phantom{\limsup_{\varepsilon \to 0}\frac{1}{\varepsilon^{N-1}} \inf\Bigg\{} \text{$u_n \in SBV(Q_\nu(x_0,\varepsilon);\mathbb R^d)$,  $u_n \to s_{\lambda,0,\nu}(\cdot-x_0)$ in $L^1(Q_\nu(x_0,\varepsilon);\mathbb R^d)$,} \nonumber \\
&\!\!\!\!&\!\!\!\!
\phantom{\limsup_{\varepsilon \to 0}\frac{1}{\varepsilon^{N-1}} \inf\Bigg\{} \text{$\nabla u_n \wsto 0$ in $\cM(Q_\nu(x_0,\varepsilon);\mathbb R^{d\times N})$} \Bigg\} \nonumber\\
&\!\!\!\!=&\!\!\!\!
\limsup_{\varepsilon \to 0} \inf\Bigg\{\liminf_{n\to\infty}\bigg[\int_{Q_\nu} \eps\, W(x_0+\eps y,\eps^{-1}\nabla v_n(y)) \, \de y + \int_{Q_\nu \cap S_{v_n}} \!\!\!\! \psi(x_0+\eps y, [v_n](y),\nu_{v_n}(y)) \,\de\mathcal H^{N-1}(y)\bigg] : \nonumber\\
&\!\!\!\!&\!\!\!\!
\phantom{\limsup_{\varepsilon \to 0}\frac{1}{\varepsilon^{N-1}} \inf\Bigg\{} \text{$v_n \in SBV(Q_\nu;\mathbb R^d)$,  $v_n \stackrel[n]{\longrightarrow}{} s_{\lambda,0,\nu}$ in $L^1(Q_\nu;\mathbb R^d)$, $\nabla v_n \stackrel[n]{*}{\longrightharpoonup} 0$ in $\cM(Q_\nu;\mathbb R^{d\times N})$} \Bigg\} \nonumber\\
&\!\!\!\!=&\!\!\!\!
\limsup_{\varepsilon \to 0} \inf\Bigg\{\liminf_{n\to\infty}\bigg[\int_{Q_\nu} \eps\, W(x_0+\eps y,\eps^{-1}\nabla v_n(y)) \, \de y + \int_{Q_\nu \cap S_{v_n}} \psi(x_0, [v_n](y),\nu_{v_n}(y)) \,\de\mathcal H^{N-1}(y)\bigg] : \nonumber\\
&\!\!\!\!&\!\!\!\!
\phantom{\limsup_{\varepsilon \to 0}\frac{1}{\varepsilon^{N-1}} \inf\Bigg\{} \text{$v_n \in SBV(Q_\nu;\mathbb R^d)$,  $v_n \stackrel[n]{\longrightarrow}{} s_{\lambda,0,\nu}$ in $L^1(Q_\nu;\mathbb R^d)$, $\nabla v_n \stackrel[n]{*}{\longrightharpoonup} 0$ in $\cM(Q_\nu;\mathbb R^{d\times N})$} \Bigg\} \nonumber\\
&\!\!\!\!=&\!\!\!\!
\limsup_{\varepsilon \to 0} \inf\Bigg\{\liminf_{n\to\infty} \widetilde{E}_{x_0,\eps}(v_n;Q_\nu) : \text{$v_n \in SBV(Q_\nu;\mathbb R^d)$,  $v_n \stackrel[n]{\longrightarrow}{} s_{\lambda,0,\nu}$ in $L^1(Q_\nu;\mathbb R^d)$,} \nonumber\\ 
&\!\!\!\!&\!\!\!\!
\phantom{\limsup_{\varepsilon \to 0}\inf\Bigg\{\liminf_{n\to\infty} \widetilde{E}_{x_0,\eps}(v_n;Q_\nu) : } \text{$\nabla v_n \stackrel[n]{*}{\longrightharpoonup} 0$ in $\cM(Q_\nu;\mathbb R^{d\times N})$} \Bigg\}= \tilde{h}_{1,1}(x_0,\lambda,\nu), \label{oh_no}
\end{eqnarray}
where we have used a change of variables, we have defined $v_n(y)\coloneqq u_n(x_0+\eps y)$, and we have invoked Remark~\ref{rem3.2}(d).

Now define $\widehat{m}_{1,1}^{\surface}(\lambda,\nu;Q_\nu(x_0,\varepsilon))\coloneqq \inf\big\{E(u;Q_\nu(x_0,\varepsilon)): u(x_0 + \varepsilon \cdot) \in \mathcal C^\surface(\lambda,\nu)\big\}$.
It is easy to verify that for a.e.~$x_0\in \Omega$, every $\lambda\in\R{d}$, and $\nu\in\mathbb{S}^{N-1}$ the following inequality holds true
$$m_{1,1}(s_{\lambda,0,\nu}(\cdot-x_0),0;Q_\nu(x_0,\varepsilon))\leq \widehat{m}_{1,1}^{\surface}(\lambda,\nu;Q_\nu(x_0,\varepsilon));$$
indeed, for any function $u$ that is admissible for $\widehat{m}_{1,1}^{\surface}(\lambda,\nu;Q(x_0,\varepsilon))$, 
the pair $(u,\nabla u)$ is a competitor for $m_{1,1}(s_{\lambda,0,\nu}(\cdot-x_0),0;Q(x_0,\varepsilon))$.
Consequently, by \eqref{oh_no},
\begin{align*}
\widetilde{h}_{1,1}(x_0,\lambda,\nu) = \limsup_{\varepsilon \to 0} \frac{m_{1,1}(s_{\lambda,0,\nu}(\cdot-x_0), 0;Q_\nu(x_0,\varepsilon))}{\varepsilon^{N-1}}
\leq \limsup_{\varepsilon \to 0} \frac{\widehat{m}_{1,1}^{\surface}(\lambda,\nu;Q_\nu(x_0,\varepsilon))}{\varepsilon^{N-1}}=h_{1,1}(x_0,\lambda,\nu),
\end{align*}
where in the last equality we used again a change of variables argument; therefore~\eqref{clean2} is proved.
\end{proof}
We stress here that Theorem~\ref{thm_approx_CF} generalizes the results of \cite[Theorems~2.16 and~2.17]{CF1997}, \cite[Theorem~3.2]{book}, and \cite[Theorems~4.1 and~4.2]{BMZ2024} in the following respect: the densities are $x$-dependent and the regularity of the initial bulk energy density $(x,A)\mapsto W(x,A)$ is weakened to Carath\'{e}odory. 
Moreover, we note that the more general expressions in formulas~\eqref{906} and~\eqref{907} generalize all those previously obtained in the references just cited.

\section{The relaxation theorem for $3$-level structured deformations}\label{multi}
In this section, we present the relaxation theorem for the specific case of $3$-level (first-order) structured deformations (that is for $L=2$), Theorem~\ref{mainL} below.
The proof we present relies on an equivalence of the complete relaxation of the initial energy \eqref{103} for the case $L=2$ (see \eqref{Ip2} below) and an iterated one (see \eqref{Ip2tilde} below) where one first relaxes to $2$-level structured deformations as in \eqref{102}, and then relaxes once more, see Proposition~\ref{T003}.
Our current proof is different from the one we found in \cite{BMMOZ}, since here we abandon freezing one of the variables at each step (see \cite[Section~3.2]{BMMOZ} for details).

Our choice for presenting the explicit details of the proof for this particular case is motivated by the fact that the case $L=2$ already contains the essential features of the general case $L\geq 2$.

The strategy of our proof is the following: after proving the equivalence of the two energies $I_{2,p}$ and $\widetilde{I}_{2,p}$\,, we study the properties of the relaxed energy densities $H_{1,p}$ and $h_{1,p}$ defined in \eqref{906} and \eqref{907}, respectively: Theorem~\ref{thm_propdens} below shows that some properties of the initial energy densities $(W,\psi)\in\ED(p)$ are maintained, whereas some are lost, especially in the case $p>1$. This theorem is needed to make sure that the relaxed energy densities obtained after the first relaxation are ``good'' energy densities on which to perform the second relaxation.

Lemmas~\ref{ubIp} and~\ref{nsa} contain technical results that show that the localization $\cO(\Omega)\ni O\mapsto I_{2,p}(\cdot,\cdot,\cdot;O)$ is the restriction of a Radon measure to the class $\cO(\Omega)$ of the open subsets of $\Omega$, see Proposition~\ref{radon}. 
Only at this point will it be possible to apply the global method for relaxation to the functional $I_{2,p}$\,, see Theorem~\ref{mainL}.

We refer the reader to Appendix~\ref{app_L=3} for some details of the case of $4$-level structured deformations, with the intention that these details should allow the reader independently to carry out further necessary relaxation steps.

Let the initial energy $E$ be given as in \eqref{103} and let $(g,G_1,G_2)\in SD_{2,p}(\Omega)$ (for $p\geq1$); we seek an integral representation for the relaxed energy
\begin{equation}\label{Ip2}
I_{2,p}(g,G_1, G_2)\coloneqq \inf\Big\{  \liminf_{n_1}\liminf_{n_2} E(u_{n_1, n_2}): \big(u_{n_1, n_2}\big)\subset SBV(\Omega;\R{d}), u_{n_1, n_2}\weakstH(g,G_1,G_2) \Big\};
\end{equation}
we also define the \emph{iterated} relaxation 
\begin{equation}\label{Ip2tilde}
\widetilde{I}_{2,p} (g, G_1, G_2)  \coloneqq \inf \Big\{  \liminf_{n_1}  I_{1,p}(\gamma_{n_1},\Gamma_{n_1}): \big((\gamma_{n_1},\Gamma_{n_1})\big)\subset SD_{1,p}(\Omega), \gamma_{n_1}\weakstH (g,G_1), \Gamma_{n_1}\wsto G_2 \Big\},
\end{equation}
where $I_{1,p}$ is defined in \eqref{102}.

The next proposition shows that the two relaxation processes in~\eqref{Ip2} and~\eqref{Ip2tilde} give the same result.
\begin{proposition}\label{T003}
Under Assumptions \ref{gen_ass}, for every $(g,G_1,G_2)\in SD_{2,p}(\Omega)$,
\begin{equation}
I_{2,p} (g, G_1, G_2) = \widetilde{I}_{2,p} (g, G_1, G_2) .
\end{equation}
\end{proposition}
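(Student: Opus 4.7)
\smallskip

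The plan is to prove the two inequalities $\widetilde{I}_{2,p}(g,G_1,G_2)\leq I_{2,p}(g,G_1,G_2)$ and $I_{2,p}(g,G_1,G_2)\leq \widetilde{I}_{2,p}(g,G_1,G_2)$ separately, exploiting Corollary~\ref{S004} in one direction and a diagonal extraction together with the coercivity property \eqref{Wcoerc} in the other.

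For $I_{2,p}\leq \widetilde{I}_{2,p}$, I would fix an admissible sequence $((\gamma_{n_1},\Gamma_{n_1}))\subset SD_{1,p}(\Omega)$ for $\widetilde{I}_{2,p}$ (whose existence is guaranteed by Remark~\ref{casepRem}). For each $n_1$, by the definition \eqref{102} of $I_{1,p}$ as an infimum, I can choose a sequence $(v_{n_2}^{(n_1)})_{n_2}\subset SBV(\Omega;\R{d})$ with $v_{n_2}^{(n_1)}\weakstH(\gamma_{n_1},\Gamma_{n_1})$ and
\[
\liminf_{n_2\to\infty} E(v_{n_2}^{(n_1)}) \leq I_{1,p}(\gamma_{n_1},\Gamma_{n_1}) + \tfrac{1}{n_1}.
\]
Setting $u_{n_1,n_2}:=v_{n_2}^{(n_1)}$, Corollary~\ref{S004} applied to this specific choice ensures $u_{n_1,n_2}\weakstH(g,G_1,G_2)$, so $(u_{n_1,n_2})$ is admissible for $I_{2,p}$ and
\[
I_{2,p}(g,G_1,G_2) \leq \liminf_{n_1}\liminf_{n_2} E(v_{n_2}^{(n_1)}) \leq \liminf_{n_1} I_{1,p}(\gamma_{n_1},\Gamma_{n_1}).
\]
Passing to the infimum over admissible sequences yields the desired inequality.

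For the reverse inequality $\widetilde{I}_{2,p}\leq I_{2,p}$, I would start from any admissible $(u_{n_1,n_2})\subset SBV(\Omega;\R{d})$ with $u_{n_1,n_2}\weakstH(g,G_1,G_2)$, assuming without loss of generality that $\liminf_{n_1}\liminf_{n_2}E(u_{n_1,n_2})<+\infty$. Definition~\ref{S000} supplies $g_{n_1}\in SBV(\Omega;\R{d})$ with $u_{n_1,n_2}\to g_{n_1}$ in $L^1$ as $n_2\to\infty$ and $\nabla g_{n_1}\wsto G_1$ as $n_1\to\infty$. I would then define $\Gamma_{n_1}$ as the weak$^\ast$ limit in $\cM(\Omega;\R{d\times N})$ of $\nabla u_{n_1,n_2}$ as $n_2\to\infty$, whose existence is part of Definition~\ref{S000}(iii). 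The delicate point is to show that $\Gamma_{n_1}$ is represented by an $L^p$ function, so that $(g_{n_1},\Gamma_{n_1})\in SD_{1,p}(\Omega)$: extracting an $n_2$-subsequence along which $E(u_{n_1,n_2})$ stays bounded, the coercivity assumption \eqref{Wcoerc} yields $L^p$-boundedness of $(\nabla u_{n_1,n_2})_{n_2}$, whence its weak$^\ast$ limit in $\cM$ coincides with its weak $L^p$ limit and is in $L^p$. Consequently, $((g_{n_1},\Gamma_{n_1}))$ is admissible for $\widetilde{I}_{2,p}$, the sequence $(u_{n_1,n_2})_{n_2}$ is admissible for the computation of $I_{1,p}(g_{n_1},\Gamma_{n_1})$, and therefore
\[
I_{1,p}(g_{n_1},\Gamma_{n_1}) \leq \liminf_{n_2} E(u_{n_1,n_2});
\]
taking $\liminf_{n_1}$ and the infimum over admissible $(u_{n_1,n_2})$ closes this direction.

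The main obstacle I anticipate is precisely the $L^p$-regularity of the intermediate field $\Gamma_{n_1}$ in the second direction: without the coercivity \eqref{Wcoerc} this limit would only be a Radon measure, which does not fit the function class defining $SD_{1,p}(\Omega)$ and would prevent us from evaluating $I_{1,p}$ on $(g_{n_1},\Gamma_{n_1})$. All remaining steps reduce to the standard diagonal argument and the definitions of the relaxed energies $I_{1,p}$ and $I_{2,p}$.
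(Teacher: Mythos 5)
Your argument follows the same two-inequality strategy as the paper: for $I_{2,p}\leq\widetilde{I}_{2,p}$ you diagonalize near-recovery sequences for $I_{1,p}(\gamma_{n_1},\Gamma_{n_1})$ via Corollary~\ref{S004}, and for the reverse inequality you read off the intermediate limits $(g_{n_1},\Gamma_{n_1})$ from Definition~\ref{S000} and use them as competitors for $\widetilde{I}_{2,p}$. Both halves are organized exactly as in the paper's proof, and your explicit $1/n_1$ error term in the first half is, if anything, cleaner than the paper's appeal to an exact recovery sequence.

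The one point to watch is the step you yourself flag as delicate. For $p>1$ your coercivity argument is conclusive: along a subsequence with bounded energy, \eqref{Wcoerc} gives an $L^p$ bound on $\nabla u_{n_1,n_2}$, and weak $L^p$ compactness identifies the weak* limit $\Gamma_{n_1}$ as an $L^p$ function, so $(g_{n_1},\Gamma_{n_1})\in SD_{1,p}(\Omega)$ as needed. For $p=1$, however, \eqref{Wcoerc} only yields an $L^1$ bound, and an $L^1$-bounded sequence of gradients can concentrate, so its weak* limit in $\cM(\Omega;\R{d\times N})$ may carry a singular part; in that case $\Gamma_{n_1}\notin L^1(\Omega;\R{d\times N})$, the pair $(g_{n_1},\Gamma_{n_1})$ does not belong to $SD_{1,1}(\Omega)$, and $I_{1,1}(g_{n_1},\Gamma_{n_1})$ is not defined, so the sentence identifying the weak* limit with a weak $L^p$ limit has no content there. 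To be fair, the paper's own proof silently asserts $(\gamma_{n_1},\Gamma_{n_1})\in SD_{1,p}(\Omega)$ at exactly this point, so this is a gap you inherit from (and at least partially diagnose beyond) the published argument; but as written your proof of $\widetilde{I}_{2,1}\leq I_{2,1}$ covers only those admissible sequences whose inner gradient limits are absolutely continuous.
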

\begin{proof}
Let $\delta > 0$ and consider $\big((\gamma_{n_1},\Gamma_{n_1})\big) \subset SD_{1,p}(\Omega)$ such that  $ \gamma_{n_1}\weakstH (g,G_{1})$, $\Gamma_{n_1} \wsto G_2$ and
\begin{equation}\label{eq1-4.4}
\widetilde{I}_{2,p}(g,G_1,G_2) + \delta \geq \liminf_{n_1} I_{1,p}(\gamma_{n_1},\Gamma_{n_1}).
	\end{equation}
By Corollary \ref{S004}, there exists an $SBV(\Omega;\R{d})$ sequence  $(n_1,n_2)\mapsto u_{n_1,n_2}=v_{n_2}^{(n_1)}$ (given by \eqref{defappr}, satisfying \eqref{apprGamma}) such that $u_{n_1, n_2}=v^{(n_1)}_{n_2}\weakstH (g,G_1,G_2)$ and is a recovery sequence for $I_{1,p}(\gamma_{n_1},\Gamma_{n_1})$ as $n_2 \to \infty$, i.e.,
\begin{equation}\label{Idputaformula}
I_{1,p}(\gamma_{n_1},\Gamma_{n_1})=\lim_{n_2} E\big(v_{n_2}^{(n_1)}\big)=\inf \Big\{\liminf_{n_2}E(v_{n_2}) : (v_{n_2}) \subset SBV(\Omega;\R{d}), v_{n_2}\weakstH (\gamma_{n_1},\Gamma_{n_1}) \Big\}.
\end{equation}
Observe that the sequence $\big(v_{n_2}^{(n_1)}\big)$ is admissible for $I_{2,p}$, since 
\begin{align*}
\lim_{n_1}\Big(\lim_{n_2}v_{n_2}^{(n_1)}\Big) &= \lim_{n_1}\gamma_{n_1}= g, \,
\mbox{ strongly in } L^1(\Omega;\R{d\times N}),\\
\lim_{n_1}\nabla \Big(\lim_{n_2} v_{n_2}^{(n_1)}\Big) &= \lim_{n_1}\nabla \gamma_{n_1}= G_1, \, \mbox{ weakly* in } \cM(\Omega;\R{d\times N}),\\
\lim_{n_1} \Big(\lim_{n_2} \nabla v_{n_2}^{(n_1)} \Big) &= \lim_{n_1} \Gamma_{n_1}= G_2, \,
\mbox{ weakly* in } \cM(\Omega;\R{d\times N}).
\end{align*}
Thus, by \eqref{Ip2}, \eqref{eq1-4.4}, and \eqref{Idputaformula}, we have
\begin{align*}
I_{2,p}(g,G_1,G_2)\leq &\, \liminf_{n_1}\liminf_{n_2}E\big(v^{(n_1)}_{n_2}\big) =  \liminf_{n_1} I_{1,p}(\gamma_{n_1},\Gamma_{n_1}) \leq \widetilde{I}_{2,p}(g,G_1,G_2) + \delta.
\end{align*}
Letting $\delta \to 0$, we conclude that $I_{2,p}(g,G_1,G_2) \leq \widetilde{I}_{2,p}(g,G_1,G_2)$. 

To prove the opposite inequality, we notice that for any sequence $\big(u_{n_1,n_2}\big) \subset SBV(\Omega;\R{d})$ such that $u_{n_1, n_2} \weakstH (g,G_1,G_2)$,  there exists $\big((\gamma_{n_1}, \Gamma_{n_1})\big) \subset SD_{1,p}(\Omega)$ such that $\displaystyle \lim_{n_2}u_{n_1,n_2}=\gamma_{n_1}$ in $L^1(\Omega;\R{d\times N})$, $\displaystyle\lim_{n_2}\nabla u_{n_1,n_2}= \Gamma_{n_1}$ weakly* in $\cM(\Omega;\mathbb R^{d \times N})$, $\gamma_{n_1}\weakstH(g,G_1)$, and  $\Gamma_{n_1} \wsto G_2$.
Hence, by \eqref{Ip2tilde} and \eqref{102}, 

$$\widetilde I_{2,p}(g,G_1,G_2) \leq \liminf_{n_1} I_{1,p}(\gamma_{n_1},\Gamma_{n_1}) \leq \liminf_{n_1} \Big(\liminf_{n_2}E(u_{n_1,n_2})\Big).$$
Taking the infimum over all such sequences $(u_{n_1,n_2})$ we obtain $\widetilde{I}_{2,p}(g,G_1,g_2)\leq I_{2,p}(g,G_1,G_2)$.
\end{proof}

In the sequel it will be useful to consider the localized version of the functional
$I_{2,p}(g,G_1,G_2)$, denoted by $I_{2,p}(g,G_1,G_2;O)$ and defined, for any open subset $O$ of $\Omega$, in full analogy with
$I_{2,p}(g,G_1,G_2)$ except that the admissible sequences are defined in $O$, the convergences in the sense of Definition \ref{S000} hold in $O$ and in the energy $E(\cdot)$ the integrations are performed over $O$.

To obtain our integral representation result for $I_{2,p}(g,G_1,G_2)$, we need some auxiliary results. We begin by proving some properties of the relaxed densities $H_{1,p}$ and $h_{1,p}$ that will be useful in the sequel.

\begin{theorem}[Properties of the relaxed energy densities]\label{thm_propdens}
Let $p\geq1$ and let $(W,\psi)\in\ED(p)$. 
Let $H_{1,p}\colon\Omega\times\R{d\times N}\times\R{d\times N}\to[0,+\infty)$ and $h_{1,p}\colon\Omega\times\R{d}\times\S{N-1}\to[0,+\infty)$ be the functions defined in~\eqref{906} and~\eqref{907}, respectively.
Then the function $H_{1,p}$ is $p$-Lipschitz continuous in the third component, namely for every $A\in \R{d \times N}$ there exists a constant $C>0$ such that for almost every $x\in\Omega$ and for every $B_1,B_2\in\R{d\times N}$,
\begin{equation}\label{H_B}
|H_{1,p}(x,A,B_1)-H_{1,p}(x,A,B_2)|\leq C|B_1-B_2|(1+|B_1|^{p-1}+|B_2|^{p-1});
\end{equation}
finally, there exist constants $\bar{c}_H,\overline{C}_{H}>0$ such that for almost every $x\in\Omega$ and for every $A,B\in\R{d\times N}$
\begin{equation}\label{JoE2.27}
\bar{c}_H (|A|+|B|^p)-\frac1{\bar{c}_H} \leq H_{1,p}(x,A,B) \leq \overline{C}_H (1+|A|+|B|^p).
\end{equation}
Let, for $B\in\R{d\times N}$, the function $H_{1,p}^B \colon\Omega\times\R{d\times N}\to[0,+\infty)$ be defined by $(x,A)\mapsto H_{1,p}^B(x,A)\coloneqq H_{1,p}(x,A,B)$.
Then
\begin{itemize}
\item[(i)] if $p>1$, then $(H_{1,p}^B,h_{1,p}) \in \ED(1)$;

\item[(ii)] if $p=1$, the function $H_{1,1}^B$ satisfies properties \eqref{W_cara}--\eqref{Wcoerc} of Assumptions~\ref{gen_ass} with $q=1$ and the function $h_{1,1}$ satisfies properties \eqref{(psi0)} and \eqref{(psi1)} of Assumptions~\ref{gen_ass}. 
Moreover, for a.e.~$x\in\Omega$ and for every $\nu\in\mathbb{S}^{N-1}$
\begin{equation}\label{h11Lip}
\big|{h}_{1,1}(x,\lambda_1,\nu)-{h}_{1,1}(x,\lambda_2,\nu)\big|\leq C_\psi|\lambda_1-\lambda_2|.
\end{equation}
\end{itemize}
\end{theorem}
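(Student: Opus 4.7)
The plan is to establish the growth bounds \eqref{JoE2.27} first, since the resulting $L^p$-control on near-minimizers is used in all subsequent steps. For the lower bound, I combine the coercivity \eqref{Wcoerc} of $W$ with Jensen's inequality, using the constraint $\int_Q\nabla u\,\de x=B$, to obtain $\int_Q W(x+\eps y,\nabla u(y))\,\de y\geq c_W|B|^p-C$; then I exploit the identity $D^ju(Q)=A-B$, which holds for any $u\in\cC_p^\bulk(A,B)$ by the divergence theorem applied to the extension of $u$ by $\ell_A$ across $\partial Q$, together with \eqref{(psi1)} of $\psi$, to produce a surface contribution $\geq c_\psi|A-B|$; a case distinction on $|B|$ rearranges these into the stated lower bound. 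For the upper bound, I build an explicit competitor consisting of a piecewise constant $SBV$-approximation of $\ell_A$ on most of $Q$ (contributing surface energy $\lesssim|A|$ and $O(1)$ bulk), together with one fixed small cube in which the gradient realizes the constraint $\int_Q\nabla u=B$ with bulk cost $\lesssim 1+|B|^p$. The $SBV$-trick of trading gradient for jumps is crucial to produce the linear (rather than $p$-power) dependence on $|A|$ in the upper bound.

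Next, for the $p$-Lipschitz estimate \eqref{H_B}, I take a near-minimizer $u\in\cC_p^\bulk(A,B_1)$ and set $v\coloneqq u+w\in\cC_p^\bulk(A,B_2)$, where $w\in SBV(Q;\R d)$ is supported in a fixed cube $Q'\subset\subset Q$ on which $u=\ell_A$, vanishes near $\partial Q$, and satisfies $\int_Q\nabla w\,\de x=B_2-B_1$; the explicit choice $w(x)\coloneqq|Q'|^{-1}(B_2-B_1)(x-c_{Q'})\chi_{Q'}(x)$ gives $|\nabla w|\lesssim|B_2-B_1|$ on $Q'$ and $|[w]|\lesssim|B_2-B_1|$ on $\partial Q'$. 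Property \eqref{(psi1)} bounds the added surface energy by $C|B_2-B_1|$, and \eqref{(W1)_p} combined with H\"older's inequality bounds the bulk-energy change by $C|B_2-B_1|\bigl(1+\|\nabla u\|_{L^p(Q)}^{p-1}+|B_2-B_1|^{p-1}\bigr)$; the $L^p$-bound $\|\nabla u\|_{L^p(Q)}^p\leq C_A(1+|B_1|^p)$ coming from the previous step (with $C_A$ depending on $A$, consistently with the $A$-dependent constant allowed in the statement) closes the estimate to $C_A|B_2-B_1|(1+|B_1|^{p-1}+|B_2|^{p-1})$, and the reverse inequality follows by interchanging $B_1$ and $B_2$.

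The remaining properties of $H_{1,p}^B$ in (i) and (ii) are obtained by variants of these constructions. Measurability of $x\mapsto H_{1,p}^B(x,A)$ follows from \eqref{W_cara} and a standard selection in \eqref{906}, while its Lipschitz continuity in $A$ with a constant independent of $A_1,A_2,B$ (matching \eqref{(W1)_p} at $q=1$) comes from a strip-modification argument: replace $u\in\cC_p^\bulk(A_1,B)$ by $\ell_{A_2}$ on a $\delta$-thin neighborhood of $\partial Q$ (lying inside $\{u=\ell_{A_1}\}$), and restore $\int_Q\nabla v=B$ via a small-mass corrector of the type used above; the new jump on the inner boundary of the strip has $\cH^{N-1}$-contribution $\lesssim|A_1-A_2|$ while the bulk discrepancy is $O(\delta)\to 0$, and the ``trading jumps for gradient'' argument from the first paragraph once again yields linear (rather than $|A|^p$) behavior. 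Properties \eqref{W1bdd} and \eqref{Wcoerc} of $H_{1,p}^B$ at $q=1$ are then immediate from \eqref{JoE2.27} upon choosing $A_0=0$.

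For $h_{1,p}$, the symmetry \eqref{(psi0)} is inherited from the symmetry of $\psi$ and the invariance of $\cC^\surface(\lambda,\nu)$ under $x\mapsto-x$; the growth \eqref{(psi1)} follows from the identity $Du(Q_\nu)=\lambda\otimes\nu$ (valid for any $u\in\cC^\surface(\lambda,\nu)$ by the same divergence argument as above, giving $|D^ju|(Q_\nu)\geq|\lambda|$) combined with \eqref{(psi1)} of $\psi$ for the lower bound, and from the trivial competitor $u=s_{\lambda,0,\nu}$ for the upper bound. For $p>1$, homogeneity \eqref{(psi2)}, sub-additivity \eqref{(psi3)}, and the continuity properties \eqref{(psi-1)} and \eqref{(psi4)} follow by standard rescaling, gluing, and density arguments, considerably simplified by the disappearance of the $\eps W(x+\eps y,\eps^{-1}\nabla u)$-term noted in Remark~\ref{rem3.2}(e). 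Finally, \eqref{h11Lip} is obtained by adding $s_{\lambda_2-\lambda_1,0,\nu}$ to any competitor $u\in\cC^\surface(\lambda_1,\nu)$, which produces a competitor in $\cC^\surface(\lambda_2,\nu)$, and applying \eqref{psi_Lip} on the midplane of $Q_\nu$. The main obstacle throughout is the simultaneous balancing, in all of the corrector constructions, of the integral constraint, the boundary condition, the $L^p$-norm, and the surface jump magnitude, so that the $p$-growth of \eqref{(W1)_p} yields the correct power dependence on the data.
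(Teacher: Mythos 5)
Your constructions are, in substance, the ones the paper relies on: its own proof of \eqref{H_B}, \eqref{JoE2.27} and of most of (i)--(ii) consists of a citation of \cite[Theorem 2.10]{BMMOZ}, and the competitor manipulations you describe (Jensen's inequality together with the identity $D^ju(Q)=A-B$ for the lower bound; collar/gluing constructions for the upper bound and for the Lipschitz estimates in $A$ and $B$; adding $s_{\lambda_2-\lambda_1,0,\nu}$ to a competitor for \eqref{h11Lip}) are exactly what that reference does. One harmless slip: in the proof of \eqref{H_B} you require the corrector $w$ to live in ``a fixed cube $Q'\subset\subset Q$ on which $u=\ell_A$''; a near-minimizer $u\in\cC^{\bulk}_p(A,B_1)$ need not coincide with $\ell_A$ on any interior cube, but your estimate never actually uses this (you control the bulk increment through the global bound on $\|\nabla u\|_{L^p(Q)}$ coming from \eqref{JoE2.27}), so nothing is lost.

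The one substantive soft spot is the measurability of $x\mapsto H^B_{1,p}(x,A)$, which you dismiss as ``a standard selection in \eqref{906}''. Since $W$ is only Carath\'eodory, for fixed $u$ the map $x\mapsto E^{\bulk}_{x,\eps}(u;Q)$ is merely measurable, not continuous, and \eqref{906} is an infimum over an uncountable class, so one cannot immediately pass to a countable subfamily of competitors; some Scorza--Dragoni or measurable-selection argument would have to be spelled out. This is precisely the point where the paper departs from \cite{BMMOZ}: it invokes the identification $H_{1,p}=f_{1,p}$ from Theorem~\ref{thm_approx_CF}, so that $H_{1,p}$ is a Radon--Nikodym derivative of a Radon measure and measurability comes for free. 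You should either adopt that route or supply the selection argument explicitly. Finally, the items you compress into ``standard rescaling, gluing, and density arguments'' --- in particular the joint continuity of $h_{1,p}$, and especially its continuity in $\nu$ --- are indeed standard but constitute a nontrivial portion of the work in the cited reference; as a blind reconstruction this is acceptable, but be aware that it is not a one-line step.
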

\begin{proof}
Estimates~\eqref{H_B} and~\eqref{JoE2.27} can be proved in the same way as estimates (2.26) and (2.27) in \cite{BMMOZ}, which were proved for the case $p>1$ and can be easily adapted to cover the case $p=1$.

\noindent (i): The thesis can be proved exactly in the same way as \cite[Theorem 2.10]{BMMOZ}, the only difference being in the regularity assumptions on~$W$, which now is only measurable with respect to the $x$-variable (see Assumption~\ref{gen_ass}-\eqref{W_cara}). 
In the present setting, measurability of $H_{1,p}^B$ is granted by the fact that $H_{1,p}$ is a Radon--Nikodym derivative.

\noindent (ii): The results concerning $H_{1,1}^B$ can be easily adapted from the proof of \cite[Theorem 2.10]{BMMOZ}, whereas those concerning $h_{1,1}$ require more care.

The symmetry property \eqref{(psi0)} is immediate. 
To prove that $h_{1,1}$ satisfies the growth condition from above, it suffices to consider an admissible $u\in \cC^{\surface}(\lambda,\nu)$ such that $\nabla u=0$ a.e.~in~$\Omega$ and to apply properties \eqref{(W1)_p} with $q=1$, \eqref{W1bdd}, and \eqref{(psi1)} (estimate from above); to prove the estimate from below, it suffices to use the fact that $W\geq0$ and to apply property \eqref{(psi1)} (estimate from below).

The proof of~\eqref{h11Lip} can be achieved using the definitions of $\limsup$ and $\inf$ and applying the Lipschitz continuity of~$\psi$ (see \eqref{psi_Lip}).
\end{proof}

\color{black}

\color{black}
\begin{lemma}\label{ubIp}
Let $p\geq1$ and assume that \eqref{(W1)_p} (with $q=p$), \eqref{W1bdd}, and \eqref{(psi1)} in Assumptions~\ref{gen_ass} hold true. 
Then there exists a constant $C > 0$ such that, for any $(g,G_1,G_2)\in SD_{2,p}(\Omega)$
and for every $O \in \cO(\Omega)$, 
\begin{align*}
&\frac{1}{C} \Big[|Dg|(O) + 
\|G_1\|_{L^1(O;\R{d \times N})} + \|G_2\|^p_{L^p(O;\R{d \times N})}\Big]
\leq 
I_{2,p}(g,G_1,G_2;O) \\
&\hspace{5cm}\leq C \Big[\mathcal L^N(O) + |Dg|(O) + 
\|G_1\|_{L^1(O;\R{d \times N})} + \|G_2\|^p_{L^p(O;\R{d \times N})}\Big].
\end{align*}
\end{lemma}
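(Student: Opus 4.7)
The plan is to invoke Proposition~\ref{T003} to rewrite $I_{2,p}(g,G_1,G_2;O)=\widetilde I_{2,p}(g,G_1,G_2;O)$ and then to exploit the integral representation of $I_{1,p}$ from Theorem~\ref{thm_approx_CF} together with the coercivity/growth of $H_{1,p}$ and $h_{1,p}$ given by \eqref{JoE2.27} and Theorem~\ref{thm_propdens}(i)--(ii). For every admissible pair $(\gamma_{n_1},\Gamma_{n_1})\in SD_{1,p}(\Omega)$ with $\gamma_{n_1}\weakstH(g,G_1)$ and $\Gamma_{n_1}\wsto G_2$, these properties produce the one-level sandwich
\begin{equation*}
c\bigl[|D\gamma_{n_1}|(O)+\|\Gamma_{n_1}\|_{L^p(O)}^p\bigr]-C\cL^N(O)\leq I_{1,p}(\gamma_{n_1},\Gamma_{n_1};O)\leq C\bigl[\cL^N(O)+|D\gamma_{n_1}|(O)+\|\Gamma_{n_1}\|_{L^p(O)}^p\bigr],
\end{equation*}
so both bounds in the lemma will follow by transferring this through the limit $n_1\to\infty$.

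\textbf{Upper bound.} I take the constant sequence $\Gamma_{n_1}\equiv G_2$ and construct $\gamma_{n_1}$ explicitly as in the proof of the Approximation Theorem: Theorem~\ref{Al} applied to $G_1-\nabla g$ produces $v\in SBV(\Omega;\R{d})$ with $\nabla v=G_1-\nabla g$ and $\int_{S_v}|\beta|\,\de\cH^{N-1}\leq C\|G_1-\nabla g\|_{L^1(\Omega)}$; Lemma~\ref{ctap} supplies piecewise constant $\bar v_{n_1}\to v$ in $L^1$ with $|D\bar v_{n_1}|(\Omega)\to|Dv|(\Omega)$; and $\gamma_{n_1}\coloneqq g-\bar v_{n_1}+v$ then satisfies $\nabla\gamma_{n_1}=G_1$ a.e., $\gamma_{n_1}\weakstH(g,G_1)$, and $\limsup_{n_1}|D\gamma_{n_1}|(O)\leq|Dg|(O)+2|Dv|(O)\leq C(|Dg|(O)+\|G_1\|_{L^1(O)})$, where \eqref{817} is applied locally on $O$. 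Inserting this into the right-hand side of the sandwich and taking $\liminf_{n_1}$ closes the upper bound.

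\textbf{Lower bound and main obstacle.} For an arbitrary admissible $(\gamma_{n_1},\Gamma_{n_1})$, I take $\liminf_{n_1}$ of the left-hand side of the sandwich. The key lower semicontinuity estimate is
\begin{equation*}
\tfrac12\bigl[|Dg|(O)+\|G_1\|_{L^1(O)}\bigr]\leq\liminf_{n_1}|D\gamma_{n_1}|(O),
\end{equation*}
which I prove by splitting $|D\gamma_{n_1}|(O)=\|\nabla\gamma_{n_1}\|_{L^1(O)}+|D^s\gamma_{n_1}|(O)$ and applying lower semicontinuity to each piece separately: from $\nabla\gamma_{n_1}\wsto G_1$ I obtain $\|G_1\|_{L^1(O)}\leq\liminf\|\nabla\gamma_{n_1}\|_{L^1(O)}$, while from the weak$^\ast$ convergence $D^s\gamma_{n_1}=D\gamma_{n_1}-\nabla\gamma_{n_1}\cL^N\wsto Dg-G_1\cL^N$ (which follows from $\gamma_{n_1}\to g$ in $L^1$) I obtain $|D^s g|(O)+\|\nabla g-G_1\|_{L^1(O)}\leq\liminf|D^s\gamma_{n_1}|(O)$; the triangle inequality $\|\nabla g\|_{L^1(O)}\leq\|\nabla g-G_1\|_{L^1(O)}+\|G_1\|_{L^1(O)}$ then bundles these three contributions into the claim. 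Combined with the standard $\|G_2\|_{L^p(O)}^p\leq\liminf\|\Gamma_{n_1}\|_{L^p(O)}^p$ and an infimum over admissible sequences, this yields $I_{2,p}(g,G_1,G_2;O)\geq c'[|Dg|(O)+\|G_1\|_{L^1(O)}+\|G_2\|_{L^p(O)}^p]-C\cL^N(O)$. The main obstacle is the absorption of the residual $-C\cL^N(O)$ term coming from the additive constant in \eqref{Wcoerc}, which I handle by enlarging the constant $C$ appearing in the statement of the lemma.
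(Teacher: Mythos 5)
Your upper bound is correct, but it takes a genuinely different route from the paper's. The paper simply plugs the approximating sequence of Theorem~\ref{appTHMh} (built on $O$ via Theorem~\ref{Al} and Lemma~\ref{ctap}) directly into the initial energy $E(\cdot;O)$ and estimates it with \eqref{(W1)_p}, \eqref{W1bdd}, \eqref{(psi1)}, \eqref{817} and \eqref{818}; no relaxed densities are needed. You instead route the estimate through Proposition~\ref{T003}, the integral representation of $I_{1,p}$ (Theorem~\ref{thm_approx_CF}) and the growth bounds \eqref{JoE2.27} and Theorem~\ref{thm_propdens}. This is logically admissible --- all of those results precede the lemma and do not depend on it --- but it is heavier machinery for the same conclusion; what it buys is that the same one-level sandwich immediately sets up your lower bound. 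Your construction of $\gamma_{n_1}$, the localization of \eqref{817}--\eqref{818} to $O$, and the lower semicontinuity arguments giving $\tfrac12\big[|Dg|(O)+\|G_1\|_{L^1(O)}\big]\le\liminf_{n_1}|D\gamma_{n_1}|(O)$ and $\|G_2\|^p_{L^p(O)}\le\liminf_{n_1}\|\Gamma_{n_1}\|^p_{L^p(O)}$ are all fine (after passing to a subsequence realizing the liminf so that the total variations, resp.\ the $L^p$-norms, are bounded).

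The genuine gap is the last sentence. From \eqref{JoE2.27} you correctly obtain $I_{2,p}(g,G_1,G_2;O)\ge c'\big[|Dg|(O)+\|G_1\|_{L^1(O)}+\|G_2\|^p_{L^p(O)}\big]-C\cL^N(O)$, but the residual $-C\cL^N(O)$ cannot be ``handled by enlarging the constant $C$ in the statement'': the stated lower bound is $\tfrac1C$ times a quantity that can be arbitrarily small while $\cL^N(O)$ stays fixed, so no multiplicative constant absorbs an additive negative term. Concretely, take $W(x,A)=|A|^p$, $\psi(x,\lambda,\nu)=|\lambda|$, $g$ affine with gradient $A_0$ and $G_1=G_2=A_0$; then $I_{2,p}(g,A_0,A_0;O)\le E(g;O)=|A_0|^p\cL^N(O)$, while the claimed lower bound is at least $\tfrac{2}{C}|A_0|\,\cL^N(O)$, which exceeds $|A_0|^p\cL^N(O)$ for $|A_0|$ small and $p>1$. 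So the estimate you actually prove, with the $-C\cL^N(O)$, is the honest one; the clean inequality requires either inserting such a term or strengthening \eqref{Wcoerc} to $W(x,A)\ge c_W|A|^p$, and you should say so rather than hide it in the constant. Two smaller remarks: your lower bound (like the paper's, which defers to \cite{BMMOZ}) tacitly uses the coercivity \eqref{Wcoerc} through \eqref{JoE2.27}, which is not among the hypotheses listed in the lemma; and Proposition~\ref{T003} is stated on $\Omega$, so you should observe that its proof localizes to every $O\in\cO(\Omega)$ before invoking $I_{2,p}(\cdot;O)=\widetilde I_{2,p}(\cdot;O)$.
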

\begin{proof}
The sequence $(n_1,n_2)\mapsto u_{n_1,n_2}$, constructed in the proof of Theorem \ref{appTHMh} by means of Theorem \ref{Al} and Lemma \ref{ctap}, is admissible for $I_{2,p}(g,G_1,G_2;O)$. Therefore, using \eqref{(W1)_p} with $q=p$, \eqref{W1bdd}, \eqref{(psi1)}, \eqref{817}, and \eqref{818}, we obtain
$$I_{2,p}(g,G_1,G_2;O) \leq \liminf_{n_1,n_2}E(u_{n_1,n_2};O) \leq C \Big[\mathcal L^N(O) + |Dg|(O) + 
\|G_1\|_{L^1(O;\R{d \times N})} + \|G_2\|^p_{L^p(O;\R{d \times N})}\Big].$$
The proof of the lower bound follows the arguments presented in the proof of \cite[Theorem 2.10]{BMMOZ} to obtain the lower bounds for the relaxed energy densities. 
\end{proof}

\begin{lemma}[Nested sub-additivity]\label{nsa}
Let $p\geq1$ and assume that Assumptions~\ref{gen_ass} with $q=p$ hold and let $O_1, O_2, O_3\in\mathcal{O}(\Omega)$ be such that $O_1 \subset \subset O_2 \subseteq O_3$. 
Then, for every $(g,G_1,G_2)\in SD_{2,p}(\Omega)$,
$$I_{2,p}(g,G_1,G_2;O_3) \leq I_{2,p}(g,G_1,G_2;O_2) + I_{2,p}(g,G_1,G_2;O_3 \setminus \overline O_1).$$
\end{lemma}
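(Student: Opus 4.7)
My plan is to carry out a De~Giorgi-style slicing and splicing argument across the annular overlap $O_2 \setminus \overline{O_1}$, adapted to the double-indexed structure of the approximating sequences. Fix $\delta>0$; by passing to subsequences so that iterated $\liminf$'s become iterated $\lim$'s, I would take near-optimal sequences $(u^{(a)}_{n_1,n_2}) \subset SBV(O_2;\R{d})$ for $I_{2,p}(g,G_1,G_2;O_2)$ and $(u^{(b)}_{n_1,n_2}) \subset SBV(O_3 \setminus \overline{O_1};\R{d})$ for $I_{2,p}(g,G_1,G_2;O_3 \setminus \overline{O_1})$, both converging to $(g,G_1,G_2)$ on their respective domains in the sense of Definition~\ref{S000}.

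Next, fix $U \in \cO(\Omega)$ with $\overline{O_1} \subset U \subset\subset O_2$, and let $\eta \coloneqq \dist{\partial U}{\overline{O_1}}$. For $M \in \N{}\setminus\{0\}$, I would introduce nested open sets $O_1 \subset\subset O_{1,1} \subset\subset \cdots \subset\subset O_{1,M} \subset\subset U$ whose boundaries $\partial O_{1,j}$ are level sets of the distance to $\partial O_1$ at heights $j\eta/M$, and define the spliced function
\begin{equation*}
v^{(j)}_{n_1,n_2}(x) \coloneqq \begin{cases} u^{(a)}_{n_1,n_2}(x) & \text{if } x \in O_{1,j}, \\ u^{(b)}_{n_1,n_2}(x) & \text{if } x \in O_3 \setminus \overline{O_{1,j}}, \end{cases}
\end{equation*}
which lies in $SBV(O_3;\R{d})$ with an extra jump set contained in $\partial O_{1,j}$. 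Using property~\eqref{(psi1)} of Assumptions~\ref{gen_ass},
\begin{equation*}
E(v^{(j)}_{n_1,n_2}; O_3) \leq E(u^{(a)}_{n_1,n_2}; O_2) + E(u^{(b)}_{n_1,n_2}; O_3 \setminus \overline{O_1}) + C_\psi \int_{\partial O_{1,j}} |u^{(a)}_{n_1,n_2} - u^{(b)}_{n_1,n_2}|\, \de \cH^{N-1}.
\end{equation*}
Applying the coarea formula to $|u^{(a)}_{n_1,n_2} - u^{(b)}_{n_1,n_2}|$ in $U \setminus \overline{O_1}$ and averaging over $j$, there exists $j^*(n_1,n_2) \in \{1,\ldots,M\}$ such that the last integral is bounded by $\tfrac{C}{\eta} \int_{U \setminus \overline{O_1}} |u^{(a)}_{n_1,n_2} - u^{(b)}_{n_1,n_2}|\, \de x$. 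Since $j^*$ takes finitely many values, a diagonal extraction---first in $n_2$ for each fixed $n_1$, then in $n_1$---produces (not relabeled) subsequences along which $j^* \equiv j^{**}$ is constant.

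The hard part is verifying that the resulting sequence $v^{(j^{**})}_{n_1,n_2}$ is admissible for $I_{2,p}(g,G_1,G_2;O_3)$, namely that it satisfies all three conditions of Definition~\ref{S000} on $O_3$: the iterated $L^1$-limit as $n_2 \to \infty$ yields an intermediate $SBV(O_3;\R{d})$ target obtained by gluing the intermediate targets of $(u^{(a)})$ and $(u^{(b)})$; the further $L^1$-limit as $n_1 \to \infty$ recovers $g$; and the iterated weak-$*$ limits of the (absolutely continuous parts of the) gradients produce $G_1$ and $G_2$ respectively, because the splice along $\partial O_{1,j^{**}}$ contributes only to the singular part of $Du$, which is invisible to the conditions of Definition~\ref{S000}. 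Once admissibility is granted, the upper bound $I_{2,p}(g,G_1,G_2;O_3) \leq \liminf_{n_1} \liminf_{n_2} E(v^{(j^{**})}_{n_1,n_2}; O_3)$, combined with the previous energy estimate and the vanishing of $\int_{U \setminus \overline{O_1}} |u^{(a)}_{n_1,n_2} - u^{(b)}_{n_1,n_2}|\, \de x$ under the iterated limit (both sequences converge iteratively to $g$ in $L^1$), yields $I_{2,p}(g,G_1,G_2;O_3) \leq I_{2,p}(g,G_1,G_2;O_2) + I_{2,p}(g,G_1,G_2;O_3 \setminus \overline{O_1}) + 2\delta$; letting $\delta \to 0$ completes the argument.
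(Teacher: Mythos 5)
Your overall strategy (splice near-optimal sequences across an annulus and control the interfacial cost) is the right idea, but it is implemented differently from the paper and contains a genuine gap at its pivotal step. The claimed bound
\begin{equation*}
\int_{\partial O_{1,j^*}} |u^{(a)}_{n_1,n_2}-u^{(b)}_{n_1,n_2}|\,\de\cH^{N-1} \leq \frac{C}{\eta}\int_{U\setminus\overline{O_1}}|u^{(a)}_{n_1,n_2}-u^{(b)}_{n_1,n_2}|\,\de x
\qquad\text{for some } j^*\in\{1,\ldots,M\}
\end{equation*}
is false when the $M$ level sets are \emph{prescribed in advance}. The coarea formula controls $\int_0^\eta\big(\int_{d^{-1}(t)}|u^{(a)}-u^{(b)}|\,\de\cH^{N-1}\big)\de t$ by the volume integral, so for each $(n_1,n_2)$ a good level exists \emph{somewhere} in a continuum of heights, but the trace of an $SBV$ function on a fixed hypersurface is not dominated by its $L^1$ norm (take $u^{(a)}-u^{(b)}$ equal to $n$ on a $1/n^2$-neighbourhood of $\partial O_{1,j}$: the volume integral vanishes while the trace integral blows up). The correct selection therefore produces a level $\rho_{n_1,n_2}$ ranging over an uncountable set, and your diagonal extraction --- which relies precisely on $j^*$ taking finitely many values --- collapses. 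Without a level that is \emph{fixed along the whole double sequence}, the admissibility check also fails: condition (ii) of Definition~\ref{S000} needs $\lim_{n_2}v_{n_1,n_2}$ to exist in $L^1$ and to be a single $SBV$ function with gradient converging weakly-$*$ to $G_1$, which is unclear if the interface moves with $(n_1,n_2)$. A further point you pass over: even with a fixed interface, multiplying weakly-$*$ converging measures by $\chi_{O_{1,j^{**}}}$ is only legitimate if no mass concentrates on $\partial O_{1,j^{**}}$, which requires choosing the level accordingly.

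The paper takes a different and shorter route that avoids the double-index difficulties altogether: it first invokes Proposition~\ref{T003} ($I_{2,p}=\widetilde{I}_{2,p}$) and Theorem~\ref{thm_approx_CF} to rewrite $I_{2,p}(\cdot;O_2)$ and $I_{2,p}(\cdot;O_3\setminus\overline{O_1})$ as limits of the \emph{once-relaxed} functional $\int H_{1,p}(x,\nabla u_n,U_n)\,\de x+\int h_{1,p}(x,[u_n],\nu_{u_n})\,\de\cH^{N-1}$ along \emph{single-indexed} sequences $(u_n,U_n),(v_n,V_n)\in SD_{1,p}$. The gluing is then the classical Choksi--Fonseca argument: the coarea formula plus Fatou's lemma yield that for a.e.\ level $\rho$ the liminf of the trace integral vanishes, one fixes a single $\rho_0$ (also chosen so that the interface is not charged by the relevant limits), passes to a subsequence, and pays the interfacial cost via the linear growth of $h_{1,p}$ from Theorem~\ref{thm_propdens}. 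If you wish to keep your direct approach, you would need to reproduce this Fatou argument at the level of iterated liminfs and then justify admissibility of the glued double sequence in detail; the reduction to single indices is what makes the paper's proof go through cleanly.
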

\begin{proof}
The proof relies on the fact, proved in Proposition \ref{T003}, that  $I_{2,p} = \widetilde{I}_{2,p}$\,, by the definition of which
there exist sequences $u_n \in SBV(O_2;{\mathbb{R}}^d)$, $U_n \in L^p(O_2;{\mathbb{R}}^{d\times N})$, $v_n \in SBV(O_3\setminus \overline O_1;{\mathbb{R}}^d)$, and  $V_n \in L^p(O_3\setminus \overline O_1;{\mathbb{R}}^{d\times N})$ such that $u_n \rightarrow g$ in $L^1(O_2;{\mathbb{R}}^d)$, 
$\nabla u_n \wsto G_1$ and $U_n \wsto G_2$ in $\cM(O_2;{\mathbb{R}}^{d\times N})$,
$v_n \rightarrow g$ in $L^1(O_3\setminus \overline O_1;{\mathbb{R}}^d)$, 
$\nabla v_n \wsto G_1$ and $V_n \wsto G_2$ in $\cM(O_3\setminus \overline O_1;{\mathbb{R}}^{d\times N})$, and, in addition, by Theorem \ref{thm_approx_CF}, applied in $O_2$ and in $O_3\setminus \overline O_1$,
\begin{equation*}
I_{2,p}(g,G_1,G_2;O_2) = \lim_{n}\bigg[\int_{O_2}H_{1,p}(x, \nabla u_n(x), U_n(x)) \, \de x + \int_{S_{u_n}\cap O_2} h_{1,p}(x,[u_n](x),\nu_{u_n}(x)) \, \de\mathcal{H}^{N-1}(x)\bigg] 
\end{equation*}
and 
\begin{align*}
I_{2,p}(g,G_1,G_2;O_3\setminus \overline{O}_1) &= \lim_{n} \bigg[\int_{O_3\setminus \overline{O}_1}H_{1,p}(x, \nabla v_n(x), V_n(x)) \, \de x \\
&\hspace{4cm} + \int_{S_{v_n}\cap(O_3\setminus \overline{O}_1)} h_{1,p}(x,[v_n](x),\nu_{v_n}(x))\, \de\mathcal{H}^{N-1}(x)\bigg].
\end{align*}
Notice that 
\begin{eqnarray}  
u_n - v_n \rightarrow 0 & \text{in} & L^1(O_2 \cap (O_3\setminus \overline{O}_1);{\mathbb{R}}^d), \label{L1conv} \\
\nabla u_n - \nabla v_n \wsto 0 & \text{in} & \cM(O_2 \cap(O_3\setminus \overline{O}_1);{\mathbb{R}}^{d\times N}), \nonumber \\
U_n - V_n \wsto 0 & \text{in} & \cM(O_2 \cap(O_3\setminus \overline{O}_1);{\mathbb{R}}^{d\times N}). \nonumber
\end{eqnarray}

For $\delta > 0$, define $O_{\delta} \coloneqq \{ x \in O_2: d(x) < \delta\}$, where, for every $x\in O_3$, we define the function $d(x)\coloneqq \mbox{dist}(x, O_1)$.
Since the distance function to a fixed set is Lipschitz continuous, we can apply the change of variables formula \cite[Section 3.4.3, Theorem 2]{EG}, to obtain 
\begin{equation*}
\int_{O_{\delta}\setminus \overline O_1} |u_n(x) - v_n(x)| \, |\det\nabla d(x)| \, \de x = \int_{0}^{\delta}\left [ \int_{d^{-1}(y)} |u_n(x) - v_n(x)| \, \de \mathcal{H}^{N-1}(x)\right]\, \de y
\end{equation*}
and, as $|\det\nabla d|$ is bounded and \eqref{L1conv} holds, it
	follows, by Fatou's Lemma, that for almost every $\rho \in [0, \delta]$ we have 
	\begin{equation}  \label{aer}
		\liminf_{n} 
		\int_{d^{-1}(\rho)} |u_n(x) - v_n(x) |\, \de\mathcal{H}^{N-1}(x) 
		= \liminf_{n} \int_{\partial O_{\rho}}
		|u_n(x) - v_n(x) |\, \de\mathcal{H}^{N-1}(x) = 0.
	\end{equation}
	Consider the (not relabelled) subsequences of $u_n$ and $v_n$ for which the liminf in \eqref{aer} is attained. Fix $\rho_0 \in [0, \delta]$ such that 
	$\|G_1 \chi_{O_2}\|_{L^1(\partial O_{\rho_0})} =\|G_2 \chi_{O_2}\|_{L^p(\partial O_{\rho_0})}= 0$, 
	$\|G_1 \chi_{O_3 \setminus \overline O_1}\|_{L^1(\partial O_{\rho_0})} =\|G_2 \chi_{O_3 \setminus \overline O_1}\|_{L^p(\partial O_{\rho_0})}= 0$ and
	such that \eqref{aer} holds. We observe that $O_{\rho_0}$ is a set with
	locally Lipschitz boundary since it is a level set of a Lipschitz function
	(see, e.g., \cite{EG}). Hence we can consider $u_n, v_n$ 
	on $\partial O_{ \rho_0}$ in the sense of traces and define 
	\begin{equation*}
		w_n \coloneqq 
		\begin{cases}
			u_n & \text{ in}\; \overline{O}_{\rho_0} \\ 
			v_n & \text{ in}\; O_3\setminus \overline{O}_{\rho_0}
		\end{cases}
\qquad\text{and}\qquad
		Z_n \coloneqq 
		\begin{cases}
			U_n & \text{ in}\; \overline{O}_{\rho_0} \\ 
			V_n & \text{ in}\; O_3\setminus \overline{O}_{\rho_0}.
		\end{cases}
	\end{equation*}
	\noindent Then, by the choice of $\rho_0$, $(w_n, Z_n)$ is admissible for 
	$I_{2,p}(g,G_1,G_2;O_3)$ and by the linear growth property of $h_{1,p}$ (see  Theorem \ref{thm_propdens}), \eqref{L1conv} and \eqref{aer}, we obtain 
	\begin{align*}
		I_{2,p}(g,G_1,G_2;O_3) &\leq  \liminf_{n} 
		\left[\int_{O_3}H_{1,p}(x, \nabla w_n(x), Z_n(x)) \, \de x +
		\int_{S_{w_{n}}\cap O_3} h_{1,p}(x,[w_n](x),\nu_{w_n}(x))\, \de\cH^{N-1}(x)\right] \\
		&\leq  \liminf_{n} 
		\left[\int_{O_2}H_{1,p}(x, \nabla u_n(x), U_n(x)) \, \de x +
		\int_{S_{u_{n}}\cap O_2} h_{1,p}(x,[u_n](x),\nu_{u_n}(x))\, \de\cH^{N-1}(x) \right. \\
		&  \hspace{.7cm} + \int_{O_3\setminus \overline O_1}H_{1,p}(x, \nabla v_n(x),V_n(x)) \, \de x  + \int_{S_{v_{n}}\cap (O_3 \setminus \overline O_1)}
		h_{1,p}(x,[v_n](x),\nu_{v_n}(x))\, \de\cH^{N-1}(x) \\
		&  \left. \hspace{.7cm} + \int_{S_{w_n} \cap \partial O_{\rho_0}} 
		C |u_n(x) - v_n(x)| \, \de\mathcal{H}^{N-1}(x) \right] \\
		& =  I_{2,p}(g,G_1,G_2;O_2) + I_{2,p}(g,G_1,G_2;O_3 \setminus \overline O_1),
	\end{align*}
which concludes the proof.
\end{proof}

\begin{proposition}\label{radon}
Under Assumptions \ref{gen_ass} with $q=q$, for every $(g, G_1,G_2)\in  SD_{2,p}(\Omega)$, the restriction of $I_{2,p}(g,G_1,G_2;\cdot)$ to $\cO(\Omega)$ is a Radon measure, absolutely continuous with respect to 
$\cL^N + \cH^{N-1}\res{S(g)}$, i.e. for every $O \in \mathcal O(\Omega)$, there exists $C>0$
$$
I_{2,p}(g, G_1, G_2; O)\leq C \int_O (1+ |G_1|+ |G_2|^p)\, \de x +|D g|(O),
$$
\end{proposition}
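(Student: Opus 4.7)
The plan is to verify the hypotheses of the De Giorgi--Letta criterion for the localized functional $O\mapsto I_{2,p}(g,G_1,G_2;O)$ on $\cO(\Omega)$: monotonicity, superadditivity on disjoint open sets, nested subadditivity, and inner regularity. These will promote it to the trace on $\cO(\Omega)$ of a Borel measure on $\Omega$; the upper bound in Lemma \ref{ubIp}, combined with the decomposition $|Dg|=|\nabla g|\cL^N+|[g]|\cH^{N-1}\res S_g$ valid for $g\in SBV(\Omega;\R{d})$, will then yield both the Radon character and the absolute continuity with respect to $\cL^N+\cH^{N-1}\res S_g$, together with the explicit bound stated in the proposition.

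Monotonicity is immediate from the definition \eqref{Ip2}, since the integrands in \eqref{103} are nonnegative and any sequence admissible on $O$ restricts to one admissible on $O'\subseteq O$. Nested subadditivity is exactly the content of Lemma \ref{nsa}. For superadditivity on disjoint $A,B\in\cO(\Omega)$, I would exploit the fact that no point of $B$ lies on $\partial A$ (and vice versa), so weak$^*$ convergence of the gradient measures on $A\cup B$ is inherited on $A$ and on $B$ separately, as is the $L^1$-convergence of the $SBV$ approximants. Thus any admissible $(u_{n_1,n_2})$ for $I_{2,p}(g,G_1,G_2;A\cup B)$ restricts to admissible sequences on $A$ and on $B$, and the identity $E(u_{n_1,n_2};A\cup B)=E(u_{n_1,n_2};A)+E(u_{n_1,n_2};B)$ combined with the subadditivity of iterated $\liminf$'s yields
$$I_{2,p}(g,G_1,G_2;A\cup B)\geq I_{2,p}(g,G_1,G_2;A)+I_{2,p}(g,G_1,G_2;B).$$

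The main technical step I expect is inner regularity, namely
$$I_{2,p}(g,G_1,G_2;O)=\sup\bigl\{I_{2,p}(g,G_1,G_2;O'):O'\ccc O\bigr\}.$$
The $\geq$ inequality follows from monotonicity. For the reverse, given $\eta>0$, I would select $O''\ccc O'''\ccc O$ such that
$$\cL^N(O\setminus\overline{O''})+|Dg|(O\setminus\overline{O''})+\|G_1\|_{L^1(O\setminus\overline{O''};\R{d\times N})}+\|G_2\|^p_{L^p(O\setminus\overline{O''};\R{d\times N})}<\eta,$$
which is possible by inner regularity of the underlying Radon measures on $\Omega$. Applying Lemma \ref{nsa} with $O_1=O''$, $O_2=O'''$, $O_3=O$ and controlling the residual term by the upper bound of Lemma \ref{ubIp} on the collar $O\setminus\overline{O''}$, I obtain
$$I_{2,p}(g,G_1,G_2;O)\leq I_{2,p}(g,G_1,G_2;O''')+C\eta,$$
and inner regularity follows by letting $\eta\to 0^+$, since $O'''\ccc O$.

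Once the four properties hold, the De Giorgi--Letta criterion produces a Borel measure $\mu$ on $\Omega$ whose restriction to $\cO(\Omega)$ coincides with $I_{2,p}(g,G_1,G_2;\cdot)$. The bound of Lemma \ref{ubIp} then guarantees that $\mu$ is finite on bounded Borel sets, hence Radon, and is absolutely continuous with respect to $\cL^N+\cH^{N-1}\res S_g$; the specific upper bound displayed in the proposition follows directly from Lemma \ref{ubIp} upon writing $|Dg|(O)=\int_O|\nabla g|\,\de x+\int_{O\cap S_g}|[g]|\,\de\cH^{N-1}$.
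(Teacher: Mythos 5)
Your proposal is correct and follows essentially the same route as the paper, which simply invokes the argument of \cite[Lemma~2.22]{CF1997} (the De Giorgi--Letta criterion) together with the nested sub-additivity of Lemma~\ref{nsa} and the upper bound of Lemma~\ref{ubIp}; you have merely spelled out the monotonicity, superadditivity, and inner-regularity verifications that the citation leaves implicit. The final step, obtaining absolute continuity from Lemma~\ref{ubIp} via the $SBV$ decomposition $|Dg|=|\nabla g|\,\cL^N+|[g]|\,\cH^{N-1}\res S_g$, is exactly what the paper intends.
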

\begin{proof}
Using the fact that $I_{2,p}= \widetilde{I}_{2,p}$, the  conclusion is achieved as in \cite[Lemma~2.22]{CF1997}, relying strongly on the nested sub-additivity property given in Lemma \ref{nsa} and on the upper bound obtained in Lemma \ref{ubIp}.
\end{proof}

We now state our main result for $3$-level structured deformations.
\begin{theorem}\label{mainL}
Let $p \geq1$, $(W,\psi)\in\cE\cD(p)$, and  $(g,G_1,G_2)\in SD_{2,p}(\Omega)$. 
Then $I_{2,p}(g, G_1, G_2)$, defined by \eqref{Ip2}, admits the integral representation 
\begin{equation*}\label{596}
\begin{split} 
I_{2,p}(g, G_1, G_2)& = \int_\Omega f_{2,p}(x,\nabla g(x), G_1(x), G_2(x)) \,\de x + \int_{\Omega \cap S(g)} \Phi_{2,p}(x, [g](x), \nu_g(x))\, \de \cH^{N-1}(x),\\
\end{split}
\end{equation*}
where, for a.e.~$x_0 \in \Omega$ and for all $(\xi, B_1,B_2) \in \R{d\times N}\times \R{d\times N}\times \R{d\times N},$ $\lambda \in \mathbb R^d, \nu \in \mathbb S^{N-1}$,
the densities $f_{2,p}(x_0, \xi, B_1,B_2)$  and $\Phi_{2,p}(x_0, \lambda, \nu)$ are given by 
\begin{align}\label{f2p}
	f_{2,p}(x_0, \xi, B_1, B_2) \coloneqq \limsup_{\varepsilon \to 0^+}
	\frac{m_{L,p}( a_{\xi,x_0}, B_1, B_2; Q(x_0,\varepsilon))}{\varepsilon^N},
\end{align}
\begin{align}\label{Phi2p}
	\Phi_{2,p}(x_0, \lambda, \nu) \coloneqq \limsup_{\varepsilon \to 0^+}\frac{m_{2,p}(s_{\lambda, 0,\nu}(\cdot-x_0), 0, 0; Q_{\nu}(x_0,\varepsilon))}{\varepsilon ^{N-1}},
\end{align}
where $m_{2,p}$ is given by \eqref{mLdef} (for $L=2$) and where the functional $\mathcal{F}$ is taken to be $I_{2,p}$\,.
\end{theorem}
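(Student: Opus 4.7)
The strategy is to apply the abstract global method for relaxation, Theorem~\ref{globalmethod-new}, to the localized functional $\mathcal{F}(g,G_1,G_2;O)\coloneqq I_{2,p}(g,G_1,G_2;O)$ with $L=2$, and then identify the cell formulas~\eqref{fL} and~\eqref{PhiL} with \eqref{f2p} and \eqref{Phi2p}. To do this, we must verify hypotheses (H1)--(H4) together with the translation invariance in the first variable discussed in Remark~\ref{remtrasl}.

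First, translation invariance is immediate: if $u\in SBV(\Omega;\R d)$ and $b\in\R d$, then $\nabla(u+b)=\nabla u$, $S_{u+b}=S_u$, $[u+b]=[u]$, and $\nu_{u+b}=\nu_u$, so $E(u+b;O)=E(u;O)$; moreover, if $u_{n_1,n_2}\weakstH(g,G_1,G_2)$ in the sense of Definition~\ref{S000}, then $u_{n_1,n_2}+b\weakstH(g+b,G_1,G_2)$. Hence $I_{2,p}(g+b,G_1,G_2;O)=I_{2,p}(g,G_1,G_2;O)$. By Remark~\ref{remtrasl}, applied with $L=2$, this forces the densities produced by Theorem~\ref{globalmethod-new} to depend on the first and third slot of the bulk density through $\nabla u$ alone, and the surface density to depend only on the jump $\lambda-\theta$ and the normal; this is exactly the form stated in \eqref{f2p} and \eqref{Phi2p}.

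Next we verify (H1)--(H4). Hypothesis (H1) is precisely Proposition~\ref{radon}, where the nested sub-additivity from Lemma~\ref{nsa} and the upper bound from Lemma~\ref{ubIp} combine via the standard De Giorgi--Letta criterion to yield a Radon measure. Hypothesis (H4) is an immediate restatement of Lemma~\ref{ubIp}. Hypothesis (H3), locality, follows because the defining sequences in~\eqref{Ip2} see $(g,G_1,G_2)$ only through their boundary data and limits on $O$, and replacing $(g,G_1,G_2)$ by $(u,U_1,U_2)$ which agree on $O$ produces the same class of admissible sequences on $O$. The one genuine task is hypothesis (H2): given $g^n\to g$ in $L^1(\Omega;\R d)$ and $G_i^n\wsto G_i$ in $\cM(\Omega;\R{d\times N})$ for $i=1,2$, we must prove
$$I_{2,p}(g,G_1,G_2;O)\leq\liminf_n I_{2,p}(g^n,G_1^n,G_2^n;O).$$
The key is to exploit the equivalence $I_{2,p}=\widetilde I_{2,p}$ proved in Proposition~\ref{T003}: for each $n$, pick an almost-optimal sequence $((\gamma^n_{k},\Gamma^n_{k}))\subset SD_{1,p}(O)$ for $\widetilde I_{2,p}(g^n,G_1^n,G_2^n;O)$, and then an almost-optimal sequence $(u^{n,k}_{j})\subset SBV(O;\R d)$ for $I_{1,p}(\gamma^n_k,\Gamma^n_k;O)$ using Theorem~\ref{thm_approx_CF} and Corollary~\ref{S004}. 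A standard Attouch-type diagonalization, together with metrizability of the convergences on bounded sets (guaranteed by the uniform estimates in (H4) along almost minimizers), yields a single double-indexed sequence approximating $(g,G_1,G_2)$ whose energy is bounded by $\liminf_n I_{2,p}(g^n,G_1^n,G_2^n;O)$ up to an arbitrary $\delta>0$; this is where the main work of the argument concentrates.

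With (H1)--(H4) in hand, Theorem~\ref{globalmethod-new} (with $L=2$) provides an integral representation
$$I_{2,p}(g,G_1,G_2;O)=\int_O f_{2,p}(x,\nabla g,G_1,G_2)\,\de x+\int_{O\cap S_g}\Phi_{2,p}(x,[g],\nu_g)\,\de\cH^{N-1},$$
with $f_{2,p}$ and $\Phi_{2,p}$ defined via the auxiliary minimization problems \eqref{fL} and \eqref{PhiL}, which, after using the translation invariance and Remark~\ref{remtrasl}, become the formulas \eqref{f2p} and \eqref{Phi2p} stated in the theorem. Taking $O=\Omega$ concludes. The main obstacle, as indicated, is the lower semicontinuity (H2), since the two levels of sub-macroscopic behavior and the presence of iterated limits force a careful diagonal extraction that does not disturb the weak* convergences of the gradients and of $G_2$.
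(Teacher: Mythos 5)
Your proposal follows essentially the same route as the paper: verify (H1)--(H4) and translation invariance for the localized functional $I_{2,p}$ by citing Proposition~\ref{radon}, Lemmas~\ref{nsa} and~\ref{ubIp}, and then invoke Theorem~\ref{globalmethod-new} together with Remark~\ref{remtrasl}. The only difference is one of emphasis: the paper treats (H2) as immediate from the definition of $I_{2,p}$ as an infimum over approximating sequences (and derives (H3) from (H2) rather than directly), whereas you spell out the diagonalization; this is a correct elaboration, not a different argument.
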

\begin{proof}
Notice that $I_{2,p}\colon SD_{2,p}(\Omega) \times\mathcal{O}(\Omega)\to [0, +\infty]$ satisfies assumptions $(H1)$-$(H4)$ above. 
Indeed, $(H1)$ was proved in Proposition~\ref{radon} and $(H2)$ follows from the definition of $I_{2,p}$\,.
Property $(H4)$ was proved in Lemma \ref{ubIp}, whereas $(H3)$ is, by standard arguments, an immediate consequence of $(H2)$. In view of Theorem \ref{globalmethod-new} and Remark \ref{remtrasl}, $I_{2,p}$ admits the stated integral representation.
\end{proof}

\begin{remark}\label{remdens}
\begin{itemize}
\item[(a)] Following the strategy of the proof of Theorem \ref{thm_approx_CF}, it is reasonable to expect that, for every $p\geq 1$,
	\begin{equation*}\label{bulk2p}
		\begin{split}
			f_{2,p}(x,A, B, C) & = \limsup_{\varepsilon \to 0} \inf \bigg\{ 
			\int_Q H_{1,p}(x + \varepsilon y,\nabla u(y), \Gamma(y))\, \de y + 
			\int_{Q \cap S(u)} \hspace{-0,3cm}h_{1,p}(x,  [u](y), \nu_u(y)) \, \de \cH^{N-1}(y) :\\
			& \hspace{1,5cm} u \in SBV(Q; \R{d}), \; \Gamma \in L^{1}(Q; \R{d\times N}), \;
			\int_Q \nabla u \, \de y = B, \; u = \ell_A \; \text{on}\; \partial Q, \;  
			\int_Q \Gamma \, \de y = C \bigg\},
		\end{split}
	\end{equation*}
whose proof could follow the steps of the proof of Theorem~\ref{thm_approx_CF}; we decided to omit the explicit computations, which are extensive and do not add any novelty to the mathematics of this paper.

\item[(b)] Proposition \ref{radon} guarantees that, for every $(g, G_1, G_2)\in SD_{2,p}(\Omega)$, the computation of the Radon--Nikodym derivative $\displaystyle \frac{\de I_{2,p}(g, G_1, G_2;  \cdot)}{\de |D^s g|}(x_0)$, at $x_0 \in S_g$, does not depend on $G_1, G_2$\,.
To verify this assertion, we follow the arguments presented in \cite[Section 4.2]{AMMZ}, which rely exclusively on the Lipschitz behaviour of $H_{1,p}(x, \cdot, B)$ and $h_{1,p}(x, \cdot, \nu)$ and the $p$-Lipschitz behaviour of $H_{1,p}(x,A, \cdot)$. 
	
Let $U\in \mathcal O(\Omega)$, consider $(\gamma_{n}, \Gamma_{n})\subset SD_{1,p}$  as a recovery sequence for $I_{2,p}(g, G_1, G_2; U)$ in the sense of Remark \ref{remLp} and, by Theorems~\ref{Al} and~\ref{ctap}, let us consider $v\in SBV(U;\mathbb R^d)$ such that $\nabla v=-G_1$ and piecewise constant functions $v_n\in SBV(U;\mathbb R^d)$ such that $v_n\to v$ in $L^1(U;\mathbb R^d)$. 
	Finally, let us define $\Gamma'_{n}= \Gamma_n- G_2$, $\gamma'_{n}\coloneqq \gamma_n+v-v_n$, so that  $(\gamma'_n, \Gamma'_n) \weakstH (g, 0,0)$, with  $\Gamma_n' \pweak 0$, and therefore

\[\begin{split}
&\, I_{2,p}(g,0,0 ;U)-I_{2,p}(g, G_1,G_2;U)\\
\leq&\,  \liminf_{n \to\infty} \bigg\{ \int_U \bigg(H_{1,p}\Big(x, \nabla \gamma'_n(x), \Gamma'_n(x)\Big)- H_{1,p}\Big(x, \nabla \gamma_n(x), \Gamma_n(x)\Big)\bigg)\de x \\
&\,\phantom{\liminf_{n \to\infty} \bigg\{} +\int_{U \cap S_{\gamma'_n}}h_{1,p}\Big(x, [\gamma'_n](x), \nu_{\gamma'_n}(x)\Big)\de\cH^{N-1}(x)-\int_{U \cap S_{\gamma_n}}h_{1,p}\Big(x, [\gamma_n](x), \nu_{\gamma_n}(x)\Big)\de \cH^{N-1}(x)\bigg\} \\
=&\, \liminf_{n \to\infty} \bigg\{ \int_U \bigg(H_{1,p}\Big(x, \nabla \gamma'_n(x), \Gamma'_n(x)\Big)- H_{1,p}\Big(x, \nabla \gamma_n(x), \Gamma'_n(x)\Big)\bigg)\de x \\
&\,\phantom{\liminf_{n \to\infty} \bigg\{} +\int_U \bigg(H_{1,p}\Big(x, \nabla \gamma_n(x), \Gamma'_n(x)\Big)- H_{1,p}\Big(x, \nabla \gamma_n(x), \Gamma_n(x)\Big)\bigg)\de x \\
&\,\phantom{\liminf_{n \to\infty} \bigg\{}+\int_{U \cap S_{\gamma'_n}}h_{1,p}\Big(x, [\gamma'_n](x), \nu_{\gamma'_n}(x)\Big)\de\cH^{N-1}(x)-\int_{U \cap S_{\gamma_n}}h_{1,p}\Big(x, [\gamma_n](x), \nu_{\gamma_n}(x)\Big)\de \cH^{N-1}(x)\bigg\}.
\end{split}\]
Hence, exploiting the triangle inequality, \eqref{H_B}, and the fact that $H^B_{1,p}$ satisfies \eqref{W_cara}-\eqref{Wcoerc} of Assumptions \ref{gen_ass} for $q=1$, and $h_{1,p}$ satisfies \eqref{psi_Lip} (see Theorem \ref{thm_propdens}), using H\"{o}lder inequality, we estimate the above energy as follows
\[\begin{split}
&\, I_{2,p}(g,0,0 ;U)-I_{2,p}(g, G_1,G_2;U) \\
\leq&\,\liminf_{n\to \infty} C\bigg\{\int_U C |\nabla \gamma_n(x)-\nabla \gamma_n'(x)|\, \de x \\
&\,\phantom{\liminf_{n\to \infty} C\bigg\{}  +\int_U C |\Gamma'_n(x)-\Gamma_n(x)|\left(1+ |\nabla \gamma_n(x)|^{\frac{p-1}{p}}(x)+ |\Gamma_n(x)|^{p-1}+ |\Gamma'_n(x)|\right))^{p-1}\,\de x \\
&\,\phantom{\liminf_{n\to \infty} C\bigg\{}  +\int_{U \cap S_v}|[v](x)| \,\de\cH^{N-1}(x) +\int_{U \cap S_{v_n}}|[v_n](x)|\,\de \cH^{N-1}(x)\bigg\}\\
\leq&\,\liminf_{n\to \infty} C\bigg\{ \int_U  |\nabla \gamma_n(x)-\nabla \gamma_n'(x)|\, \de x +\int_U C |\Gamma'_n(x)-\Gamma_n(x)| \, \de x\\
&\,\phantom{\liminf_{n\to \infty} C\bigg\{}  + \left(\int_{U} |\nabla \gamma_n(x)| \, \de x\right)^{\frac{p-1}{p}} \left(\int_\Omega |\Gamma_n(x)- \Gamma'_n(x)|^p\, \de x\right)^{\frac{1}{p}} +\int_U |\Gamma_n(x)|^{p} \, \de x+ \int_U|\Gamma'_n(x)|^{p}\,\de x\\ 
&\,\phantom{\liminf_{n\to \infty} C\bigg\{}  + \left(\int_U |\Gamma_n(x)|^{p}\, \de x\right)^{\frac{p-1}{p}}\left(\int_U |\Gamma'_n(x)|^p \, \de x\right)^{\frac{1}{p}} + \left(\int_U |\Gamma'_n(x)|^{p}\, \de x\right)^{\frac{p-1}{p}}\left(\int_U |\Gamma_n(x)|^p \, \de x\right)^{\frac{1}{p}}\\
&\,\phantom{\liminf_{n\to \infty} C\bigg\{}  + \int_{U \cap S_v}|[v](x)| \,\de\cH^{N-1}(x) +\int_{U \cap S_{v_n}}|[v_n](x)|\,\de \cH^{N-1}(x)\bigg\}\\
\leq&\,\liminf_{n\to \infty} C\bigg\{\int_U (1+ |G_1(x)|+ |G_2|^p(x))\,\de x + \int_{U \cap S_v}|[v](x)| \,\de\cH^{N-1}(x) +\int_{U \cap S_{v_n}}|[v_n](x)|\,\de \cH^{N-1}(x)\bigg\},
\end{split}\]
where the last inequality is a consequence of the weak* convergence of $\nabla \gamma_n$ and $\nabla \gamma_n'$ towards $G_1 \in L^1(\Omega;\mathbb R^{d \times N})$ and $0$, respectively, and the $L^p$-weak convergence of $\Gamma_n$ and $\Gamma'_n$ towards $G_2 \in L^p(\Omega;\mathbb R^d)$ and $0$, respectively, and 
where $C>0$ is a suitable constant, varying from line to line.

By virtue of the estimate in \eqref{817} and by \eqref{818}, the two surface integrals in the last line above are bounded by $\int_U |G_1(x)|\, \de x$ so that, 
	by exchanging the roles of $I_{2,p}(g,G_1, G_2; U)$ and $I_{2,p}(g, 0, 0; U)$,
 we arrive at the conclusion that
$$|I_{2,p}(g,0,0;U)-I_{2,p}(g, G_1, G_2;U)|\leq C\int_U (1+ |G_1|(x)|+ |G_2|^p(x))\,\de x,$$
for every $U \in \mathcal O(\Omega)$.
In turn, this guarantees that, for $\cH^{N-1}$-a.e.~$x_0 \in S_g$,
\begin{equation}\label{derFsur}
	\frac{\de I_{2,p}(g,0,0;\cdot)}{\de |D^s g|}(x_0)= \frac{\de I_{2,p}(g, G_1,G_2, \cdot)}{\de |D^s g|}(x_0).
\end{equation}	
In view of this, without loss of generality, we may consider $G_1=G_2=0$ when we compute the surface energy density. 
	
\item[(c)]  Observe that for every $p \geq 1$, for $\cH^{N-1}$ a.e.~$x_0$\,, and for all $\lambda \in \R{d}, \; \nu \in \cS^{N-1}$, 
	$$\Phi_{2,p}(x_0,\lambda, \nu) = h_{1,p}(x_0,\lambda, \nu),$$
	given by \eqref{907}.
	Indeed, by Proposition \ref{T003}, the definition of $\widetilde{I}_{2,p}$\,, and by using the lower semicontinuity with respect to  $L^1 \times \mathcal M_{{\rm weak}*} $ convergence of $I_{1,p}$\,, (that can be proved as in \cite[Proposition 5.1]{CF1997}), we have, for every $U \in \mathcal O(\Omega)$,
	\begin{equation*}
		\begin{split}
		I_{2,p}(g, G_1, G_2; U)& = 	\widetilde I_{2,p}(g, G_1, G_2; U) \geq \inf \{\liminf_{n_1} I_{1,p}(\gamma_{n_1}, \Gamma_{n_1})(U): \; \gamma_{n_1} \to g, \; \Gamma_{n_1} \wsto G_2\}\\
			& \geq I_{1,p}(g, G_2)(U)= \int_U H_{1,p}(x, \nabla g, G_2)\; \de x + \int_{U\cap S(g)} h_{1,p} (x, [g], \nu(g))\; \de \cH^{N-1}\\
		\end{split}
	\end{equation*}
	where in the last line we have exploited Theorem \ref{thm_approx_CF}.
	
Then, since, in view of Proposition \ref{radon}, $I_{2,p}(g, G_1, G_2; \cdot)$ is a measure which is absolutely continuous with respect to $\cL^N + \cH^{N-1}\res{S(g)}$,  it suffices to take the Radon--Nikodym derivative with respect to $\cH^{N-1}\res{S(g)}$, when $g=s_{\lambda,0, \nu}$\,, on both sides of the previous inequality, to obtain $	\Phi_{2,p}(x_0,\lambda, \nu) \geq h_{1,p}(x_0,\lambda, \nu)$. 

Regarding the reverse inequality, using again Theorem \ref{mainL} and (b)
	\begin{align}\label{RNx0}
	\Phi_{2,p}(x_0,\lambda, \nu )= 
	\frac{\de I_{2,p}(s_{\lambda,0,\nu}(\cdot-x_0), G_1, G_2; \cdot)}{\de {\mathcal H}^{N-1}\res S(s_{\lambda,0,\nu})}(x_0)= \frac{\de I_{2,p}(s_{\lambda,0,\nu}(\cdot-x_0), 0, 0; \cdot)}{\de {\mathcal H}^{N-1}\res S(s_{\lambda,0,\nu})}(x_0).
	\end{align}
	Taking into account the definition of $\widetilde{I}_{2,p}$ in \eqref{Ip2tilde} by taking $\gamma_{n_1}= s_{\lambda, 0, \nu}(\cdot-x_0)$, and  $\Gamma_{n_1} = 0$, and invoking Theorem \ref{thm_approx_CF},  
	\begin{equation*}
		\begin{split}
			I_{2,p}(s_{\lambda,0, \nu}(\cdot-x_0), 0, 0; U)& = 	\widetilde I_{2,p}(s_{\lambda,0, \nu}(\cdot-x_0), 0, 0; U) \leq \liminf_{n_1} I_{1,p}(s_{\lambda,0, \nu}(\cdot-x_0), 0; U) \\
			& =	\int_U H_{1,p}(x, 0, 0)\; \de x + \int_{U\cap S_{s_{\lambda,0,\nu}(\cdot-x_0)}}h_{1,p} (x, [\lambda], \nu))\; \de \cH^{N-1},\\
		\end{split}
	\end{equation*}
	which gives the desired inequality in view of \eqref{RNx0}. 
	\item[(d)] In \cite[Theorem 3.4]{BMMOZ} we proposed a recursive relaxation procedure to assign an energy to a three-level structured deformation in the case $p>1$, for $(g, G_1, G_2)\in SD_{2,p}(\Omega)$ with $\nabla g, G_1 \in L^p(\Omega;\mathbb R^{d\times N})$.
We point out that, in view of the growth conditions in \eqref{JoE2.27}, this is not the natural space in which to set the problem; therefore we could not apply \cite[Theorem 3.2]{BMZ2024} and for this reason we need to rely on Theorem \ref{globalmethod-new}. 
We also stress that, since $SD_{2,p}$ is a different space from $HSD^2_p$ in \cite{BMZ2024}, Theorem \ref{mainL} should yield a lower energy than the one in \cite[Theorem 3.4]{BMMOZ}, despite the fact that the surface energy density $\Phi_{2,p}=h_{1,p}$ in \eqref{907} coincides with the one obtained in \cite{BMMOZ}, as an easy computation reveals.  
Nevertheless, in some situations the bulk energy densities also coincide, as the example below shows.
\end{itemize}
\end{remark}

\begin{example}\label{example}
Let $p>1$ and consider an initial bulk energy density $W$ independent of the $x$-variable and convex, and the initial surface energy density $\psi (\lambda, \nu) = |\lambda \cdot \nu|$ (so, also independent of the $x$-variable).
Following the explicit example of \cite[Section~3.3]{BMMOZ} and \cite[Section~3.2.2.1]{book}, we have that
$$ H_{1,p}(A,B) = W(B) + |\tr (A -B)| \qquad\text{and}\qquad h_{1,p}(\lambda,\nu)=\psi(\lambda,\nu)=|\lambda\cdot\nu|.$$
Moreover, since $H_{1,p}$ is still convex in the first variable and both relaxed energy densities are independent of the $x$-variable, we can replicate the process, obtaining
$$H_{2,p} (A, B, C) = W(C) +  |\tr (B - C)| +  |\tr (B -A)|\qquad \text{and}\qquad h_{2,p}(\lambda,\nu)=h_{1,p}(\lambda,\nu)=|\lambda\cdot\nu|,$$
which is consistent with \cite[formula (3.15)]{BMMOZ}.
\end{example}

\appendix
\section{The case $L=3$}\label{app_L=3}
Under Assumptions \ref{gen_ass}, given the initial energy $E$ in \eqref{103} and given $(g,G_1,G_2,G_3)\in SD_{3,p}(\Omega)$, we seek an integral representation of the relaxed energy
\begin{equation}\label{S005}
\begin{split}
&\,I_{3,p}(g,G_1, G_2,G_3)\coloneqq \inf\Big\{  \liminf_{n_1,n_2,n_3} E(u_{n_1, n_2,n_3}): \big(u_{n_1, n_2,n_3}\big)\subset SBV(\Omega;\R{d}), u_{n_1, n_2,n_3}\weakstH(g,G_1,G_2,G_3)\Big\} 
\end{split}
\end{equation}
Moreover, define
\begin{equation}\label{T003eq}
\begin{split}
\widetilde{I}_{3,p} (g, G_1, G_2,G_3)  \coloneqq 
\inf \Big\{  \liminf_{n_1}\widetilde I_{2,p}(\gamma_{n_1},\Gamma_{n_1}, \Upsilon_{n_1}): \, &(\gamma_{n_1},\Gamma_{n_1}, \Upsilon_{n_1})\in SD_{2,p}(\Omega), \\
& \gamma_{n_1}\weakstH (g,G_1), \Gamma_{n_1}\weakst G_2, \Upsilon_{n_1}\weakst G_3 \Big\}, 
\end{split}
\end{equation}
where  $\widetilde I_{p,2}$ is the functional given by  \eqref{Ip2tilde}. 

We start by stating and proving the equivalent of Corollary \ref{S004} in the case $L=3$.
\begin{corollary}\label{K3}
For every $(g,G_1,G_2,G_3)\in SD_{3,p}(\Omega)$ and for every sequence $\big(\gamma_{n_1},\Gamma_{n_1},\Upsilon_{n_1}\big)\in SD_{2,p}(\Omega)$ such that $\gamma_{n_1}\weakstH(g,G_1)$ in the sense of Definition~\ref{S000}, 
$\Gamma_{n_1}\wsto G_2$ in $\cM(\Omega;\R{d\times N})$ and 
$\Upsilon_{n_1}\wsto G_3$ in $\cM(\Omega;\R{d\times N})$, there exists a sequence $(n_1,n_2,n_3)\mapsto u_{n_1,n_2,n_3}$ converging to $(g,G_1,G_2,G_3)$ 
in the sense of Definition~\ref{S000}.

Furthermore, if $p>1$ and
$\displaystyle \sup_{n_1} \|(\Gamma_{n_1},\Upsilon_{n_1})\|_{L^p(\Omega;\R{d \times N}\times \R{d \times N})} < + \infty$, then
$\displaystyle\sup_{n_1,n_2,n_3} \|\nabla u_{n_1,n_2,n_3}\|_{L^p(\Omega;\R{d \times N})} < + \infty.$
\end{corollary}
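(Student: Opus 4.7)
The plan is to iterate the construction of Corollary~\ref{S004} one more level. For each fixed $n_1\in\mathbb{N}$, I will view $(\gamma_{n_1},\Gamma_{n_1},\Upsilon_{n_1})$ as a three-level structured deformation in $SD_{2,p}(\Omega)$ and supply a natural $SD_1$-valued inner approximating sequence: I take $\delta^{(n_1)}_{n_2}\in SBV(\Omega;\R{d})$ from the Approximation Theorem~\cite[Theorem~2.12]{CF1997} (so $\delta^{(n_1)}_{n_2}\to \gamma_{n_1}$ in $L^1$ and $\nabla\delta^{(n_1)}_{n_2}\wsto \Gamma_{n_1}$ in $\cM$ as $n_2\to\infty$) and set $\Delta^{(n_1)}_{n_2}:=\Upsilon_{n_1}$ (constant in $n_2$, which by hypothesis converges weakly* to $\Upsilon_{n_1}$ trivially). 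Applying Corollary~\ref{S004} to the target $(\gamma_{n_1},\Gamma_{n_1},\Upsilon_{n_1})$ with this approximating pair then produces a double-indexed sequence $v^{(n_1)}_{n_2,n_3}\in SBV(\Omega;\R{d})$ converging to $(\gamma_{n_1},\Gamma_{n_1},\Upsilon_{n_1})$ in the sense of Definition~\ref{S000} with $L=2$; moreover, by the exact-gradient identity~\eqref{apprGamma}, the construction delivers $\nabla v^{(n_1)}_{n_2,n_3}=\Delta^{(n_1)}_{n_2}=\Upsilon_{n_1}$ a.e.\ in $\Omega$. Setting $u_{n_1,n_2,n_3}:=v^{(n_1)}_{n_2,n_3}$ will be my candidate triple-indexed sequence.

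The verification of the three conditions of Definition~\ref{S000} with $L=3$ is then a short bookkeeping exercise. Condition~(i) is immediate from $\lim_{n_1}\gamma_{n_1}=g$ composed with the inner $L^1$-convergence of $v^{(n_1)}_{n_2,n_3}$ to $\gamma_{n_1}$. For condition~(ii) the required intermediate $SBV$ sequences come for free from the construction: at level $\ell=1$ I take $g_{n_1}:=\gamma_{n_1}$ and at level $\ell=2$ I take $g_{n_1,n_2}:=\delta^{(n_1)}_{n_2}$; the two iterated gradient limits $\lim_{n_1}\nabla\gamma_{n_1}=G_1$ and $\lim_{n_1}\lim_{n_2}\nabla\delta^{(n_1)}_{n_2}=\lim_{n_1}\Gamma_{n_1}=G_2$ then follow, respectively, from the outer hypothesis $\gamma_{n_1}\weakstH(g,G_1)$ and from the CF-approximation step combined with $\Gamma_{n_1}\wsto G_2$. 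Condition~(iii) is immediate from $\nabla u_{n_1,n_2,n_3}=\Upsilon_{n_1}$ together with the outer hypothesis $\Upsilon_{n_1}\wsto G_3$.

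The $L^p$-boundedness addendum is then essentially for free: the exact-gradient identity yields $\|\nabla u_{n_1,n_2,n_3}\|_{L^p(\Omega;\R{d\times N})}=\|\Upsilon_{n_1}\|_{L^p(\Omega;\R{d\times N})}$, which is uniformly bounded by the hypothesis $\sup_{n_1}\|(\Gamma_{n_1},\Upsilon_{n_1})\|_{L^p(\Omega;\R{d\times N}\times\R{d\times N})}<+\infty$. I do not expect any genuinely new analytic obstacle; the only care needed is to keep the three iterated limits in Definition~\ref{S000} aligned with the three indices (outer $n_1$ closing to $(g,G_1)$, middle $n_2$ recovering $G_2$, inner $n_3$ feeding $G_3$), and to note that every intermediate sequence produced along the way automatically lies in $SBV$ because the underlying constructions---Theorem~\ref{Al} and Lemma~\ref{ctap}---return $SBV$ functions. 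The very same iterative scheme extends to arbitrary $L\in\mathbb{N}\setminus\{0\}$ by a further layer of the same induction.
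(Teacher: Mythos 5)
Your proposal is correct and follows essentially the same route as the paper: iterate Corollary~\ref{S004} for each fixed $n_1$ with target $(\gamma_{n_1},\Gamma_{n_1},\Upsilon_{n_1})$, set $u_{n_1,n_2,n_3}\coloneqq v^{(n_1)}_{n_2,n_3}$, and chain the iterated limits. Your explicit choice of the inner pair (CF approximants $\delta^{(n_1)}_{n_2}$ together with the constant sequence $\Delta^{(n_1)}_{n_2}=\Upsilon_{n_1}$) is exactly what the paper's proof implicitly uses to get the exact-gradient identity $\nabla u_{n_1,n_2,n_3}=\Upsilon_{n_1}$ and hence the $L^p$ bound.
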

\begin{proof}
By Corollary \ref{S004}, for every $n_1$, there exists a sequence $(n_2,n_3)\mapsto v_{n_2,n_3}^{(n_1)}\in SBV(\Omega;\R{d})$ such that $v_{n_2,n_3}^{(n_1)}$ converges to $(\gamma_{n_1},\Gamma_{n_1},\Upsilon_{n_1})$ in the sense of Definition \ref{S000}.
This means that 
$$ \lim_{n_2} \lim_{n_3}v_{n_2,n_3}^{(n_1)} = \gamma_{n_1} 
\mbox{ strongly in } L^1(\Omega;\R{d}),$$
$$\lim_{n_3}v_{n_2,n_3}^{(n_1)} = g_{n_2}^{(n_1)} \; \mbox{ where }
g_{n_2}^{(n_1)} \in SBV(\Omega;\R{d}) \; \mbox{ is such that } 
\lim_{n_2}\nabla g_{n_2}^{(n_1)} = \Gamma_{n_1} \mbox{ weak * in }  \cM(\Omega;\R{d\times N}),$$
and finally
$$ \lim_{n_2} \lim_{n_3}\nabla v_{n_2,n_3}^{(n_1)} = \Upsilon_{n_1} 
\mbox{ weakly* in } \cM(\Omega;\R{d\times N}).$$
Then the sequence 
\begin{equation}
(n_1,n_2,n_3)\mapsto u_{n_1,n_2,n_3}\coloneqq v_{n_2,n_3}^{(n_1)}
\end{equation}
approximates $(g,G_1,G_2,G_3)$ in the sense of Definition~\ref{S000}.
Indeed, 
$$\lim_{n_1}\lim_{n_2}\lim_{n_3} u_{n_1,n_2,n_3}=\lim_{n_1}\lim_{n_2}\lim_{n_3} v_{n_2,n_3}^{(n_1)}=\lim_{n_1} \gamma_{n_1}=g$$ 
strongly in $L^1(\Omega;\R{d})$, which proves part (i).
On the other hand,
$$\lim_{n_2}\lim_{n_3} u_{n_1,n_2,n_3} = \gamma_{n_1} \in SBV(\Omega;\R{d}) 
\;\mbox{ and }
\lim_{n_1}\nabla \gamma_{n_1} =G_1
\mbox{ weak } * \mbox{ in } \cM(\Omega;\R{d\times N}),$$ and
$$\lim_{n_3} u_{n_1,n_2,n_3} = g_{n_2}^{(n_1)} \in SBV(\Omega;\R{d})
\; \mbox{ and }
\lim_{n_1} \lim_{n_2}\nabla g_{n_2}^{(n_1)} =
\lim_{n_1} \Gamma_{n_1}=G_2
\mbox{ weak } * \mbox{ in } \cM(\Omega;\R{d\times N}),$$
so part (ii) is proved.
Finally,
$$\lim_{n_1} \lim_{n_2} \lim_{n_3}
\nabla u_{n_1,n_2,n_3}
=\lim_{n_1} \lim_{n_2}\lim_{n_3} 
\nabla v_{n_2,n_3}^{(n_1)}=\lim_{n_1} \Upsilon_{n_1}=G_3
\mbox{ weak } * \mbox{ in } \cM(\Omega;\R{d\times N}),$$ which proves part (iii).

In addition, if $p >1$,
$$\sup_{n_1,n_2,n_3} \|\nabla u_{n_1,n_2,n_3}\|_{L^p(\Omega;\R{d \times N})} 
= \sup_{n_1,n_2,n_3} \|\nabla v_{n_2,n_3}^{(n_1)}\|_{L^p(\Omega;\R{d \times N})}
\leq \sup_{n_1} \|\Upsilon_{n_1}\|_{L^p(\Omega;\R{d \times N})} < + \infty.$$
\end{proof}

We now derive the counterpart of Proposition~\ref{T003}. 

\begin{proposition}\label{T004bis}
Under Assumptions \ref{gen_ass}, for every $(g,G_1,G_2,G_3)\in SD_{3,p}(\Omega)$,
	\begin{equation}\label{thesis}
		I_{3,p} (g, G_1, G_2,G_3) = \widetilde{I}_{3,p} (g, G_1, G_2,G_3) .
	\end{equation}
\end{proposition}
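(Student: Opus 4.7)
The plan is to mimic the proof of Proposition~\ref{T003}, using the identity $I_{2,p}=\widetilde I_{2,p}$ already at our disposal together with the triple-indexed recovery sequences produced by Corollary~\ref{K3}. Each of the two inequalities in~\eqref{thesis} will be established separately, with Proposition~\ref{T003} playing the role of an ``inductive step'' that bridges the innermost two levels.

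For the inequality $I_{3,p}(g,G_1,G_2,G_3)\leq \widetilde I_{3,p}(g,G_1,G_2,G_3)$, I will fix $\delta>0$ and select a sequence $(\gamma_{n_1},\Gamma_{n_1},\Upsilon_{n_1})\subset SD_{2,p}(\Omega)$ with $\gamma_{n_1}\weakstH(g,G_1)$, $\Gamma_{n_1}\wsto G_2$ and $\Upsilon_{n_1}\wsto G_3$ such that
\[
\widetilde I_{3,p}(g,G_1,G_2,G_3)+\delta\geq \liminf_{n_1}\widetilde I_{2,p}(\gamma_{n_1},\Gamma_{n_1},\Upsilon_{n_1})=\liminf_{n_1} I_{2,p}(\gamma_{n_1},\Gamma_{n_1},\Upsilon_{n_1}),
\]
where the last equality is Proposition~\ref{T003}. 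For every $n_1$, the very definition of $I_{2,p}$ provides a double-indexed sequence $v^{(n_1)}_{n_2,n_3}\in SBV(\Omega;\R{d})$ approximating $(\gamma_{n_1},\Gamma_{n_1},\Upsilon_{n_1})$ in the sense of Definition~\ref{S000} and satisfying $\liminf_{n_2}\liminf_{n_3}E(v^{(n_1)}_{n_2,n_3})\leq I_{2,p}(\gamma_{n_1},\Gamma_{n_1},\Upsilon_{n_1})+1/n_1$. Setting $u_{n_1,n_2,n_3}\coloneqq v^{(n_1)}_{n_2,n_3}$, Corollary~\ref{K3} guarantees that $u_{n_1,n_2,n_3}\weakstH(g,G_1,G_2,G_3)$, hence this sequence is admissible for $I_{3,p}$, and chaining the three iterated liminf's yields $I_{3,p}(g,G_1,G_2,G_3)\leq \widetilde I_{3,p}(g,G_1,G_2,G_3)+\delta$; letting $\delta\to0$ concludes.

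For the reverse inequality, starting from any $(u_{n_1,n_2,n_3})\subset SBV(\Omega;\R{d})$ admissible for $I_{3,p}(g,G_1,G_2,G_3)$, Definition~\ref{S000} supplies the $SBV$ intermediate fields $g_{n_1,n_2}$ and $g_{n_1}$ together with the iterated weak$^*$ limits
\[
\Gamma_{n_1}\coloneqq \lim_{n_2}\nabla g_{n_1,n_2},\qquad \Upsilon_{n_1}\coloneqq \lim_{n_2}\lim_{n_3}\nabla u_{n_1,n_2,n_3}.
\]
Setting $\gamma_{n_1}\coloneqq g_{n_1}$, the triple $(\gamma_{n_1},\Gamma_{n_1},\Upsilon_{n_1})$ belongs to $SD_{2,p}(\Omega)$, satisfies $\gamma_{n_1}\weakstH(g,G_1)$, $\Gamma_{n_1}\wsto G_2$, $\Upsilon_{n_1}\wsto G_3$, and, for fixed $n_1$, the slice $(u_{n_1,n_2,n_3})_{n_2,n_3}$ approximates it in the sense of Definition~\ref{S000} for $L=2$. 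Consequently, $\liminf_{n_2}\liminf_{n_3}E(u_{n_1,n_2,n_3})\geq I_{2,p}(\gamma_{n_1},\Gamma_{n_1},\Upsilon_{n_1})=\widetilde I_{2,p}(\gamma_{n_1},\Gamma_{n_1},\Upsilon_{n_1})$ by Proposition~\ref{T003}; taking $\liminf_{n_1}$ and then the infimum over the admissible sequences $(u_{n_1,n_2,n_3})$, the definition of $\widetilde I_{3,p}$ delivers $\widetilde I_{3,p}(g,G_1,G_2,G_3)\leq I_{3,p}(g,G_1,G_2,G_3)$.

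The main technical care is needed in verifying that the iterated limits $\Gamma_{n_1}$ and $\Upsilon_{n_1}$ extracted from $(u_{n_1,n_2,n_3})$ really exist as elements of $L^1$ (respectively $L^p$ when $p>1$) and weak$^*$-converge to the prescribed targets $G_2,G_3$: this is encoded in the compatibility conditions of Definition~\ref{S000}, and when $p>1$ the uniform $L^p$ bound noted in Remark~\ref{casepRem} upgrades weak$^*$ measure convergence to weak $L^p$ convergence, ensuring that the triple $(\gamma_{n_1},\Gamma_{n_1},\Upsilon_{n_1})$ is a legitimate competitor in~\eqref{T003eq}. The only further subtlety is the near-optimal diagonal choice of $v^{(n_1)}_{n_2,n_3}$ in the first inequality, which is routine since the infimum in $I_{2,p}$ is realized as a $\liminf$ of energies along admissible sequences.
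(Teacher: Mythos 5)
Your proposal is correct and takes essentially the same approach as the paper: both directions combine near-optimal recovery sequences with the identity $I_{2,p}=\widetilde I_{2,p}$ from Proposition~\ref{T003}, chaining the iterated $\liminf$'s for the upper bound and extracting the intermediate fields $(\gamma_{n_1},\Gamma_{n_1},\Upsilon_{n_1})$ from Definition~\ref{S000} for the lower bound. The only cosmetic differences are that the paper uses a fixed $\delta$-almost-minimizer at each $n_1$ instead of a $1/n_1$-almost-minimizer, and verifies the composed convergence of $u_{n_1,n_2,n_3}$ directly from the convergences of $(\gamma_{n_1},\Gamma_{n_1},\Upsilon_{n_1})$ rather than by invoking Corollary~\ref{K3}.
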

\begin{proof}
Given $\delta > 0$, let $(\gamma_{n_1},\Gamma_{n_1},\Upsilon_{n_1}) \in SD_{2,p}(\Omega)$ be such that 
\begin{equation}\label{neededlater} 
\gamma_{n_1}\weakstH (g,G_{1}), \qquad \Gamma_{n_1} \weakst G_2\,, \qquad\text{and}\quad \Upsilon_{n_1} \weakst G_3\,,
\end{equation}
and
\begin{equation}\label{eq1}
\widetilde{I}_{3,p}(g,G_1,G_2,G_3) + \delta \geq \liminf_{n_1}\widetilde I_{2,p}(\gamma_{n_1},\Gamma_{n_1}, \Upsilon_{n_1})=\liminf_{n_1} I_{2,p}(\gamma_{n_1},\Gamma_{n_1}, \Upsilon_{n_1}),
\end{equation}
where we invoked Proposition~\ref{T003} for the last equality.
Recalling \eqref{Ip2}, let $\big(u^{(n_1)}_{n_2,n_3}\big)\subset SBV(\Omega;\R{d})$ be such that $\big(u^{(n_1)}_{n_2,n_3}\big)\weakstH(\gamma_{n_1},\Gamma_{n_1},\Upsilon_{n_1})$ and 
$$I_{2,p}(\gamma_{n_1},\Gamma_{n_1}, \Upsilon_{n_1})\geq \liminf_{n_2}\liminf_{n_3} E\big(u^{(n_1)}_{n_2,n_3}\big)-\delta,$$
so that 
\begin{equation}\label{almostthere}
\widetilde{I}_{3,p}(g,G_1,G_2,G_3) + 2\delta \geq \liminf_{n_1}\liminf_{n_2}\liminf_{n_3} E\big(u^{(n_1)}_{n_2,n_3}\big).
\end{equation}
By \eqref{neededlater}, the sequence $u_{n_1,n_2,n_3}\coloneqq u^{(n_1)}_{n_2,n_3}$ converges to $(g,G_1,G_2,G_3)$ according to Definition~\ref{S000}, so that, by taking the infimum over all such sequences in \eqref{almostthere}, we obtain
$$\widetilde{I}_{3,p}(g,G_1,G_2,G_3) + 2\delta \geq I_{3,p}(g,G_1,G_2,G_3),$$
and we obtain the $\leq$ inequality in \eqref{thesis} by taking the limit $\delta\to0^+$.

To prove the reverse inequality, notice that for any $\delta>0$, we find a sequence $\big(u_{n_1,n_2,n_3}\big)\subset SBV(\Omega;\R{d})$ such that $u_{n_1,n_2,n_3}\weakstH (g,G_1,G_2,G_3)$ in the sense of Definition~\ref{S000} and
$$I_{3,p}(g,G_1,G_2,G_3)+\delta \geq \liminf_{n_1}\liminf_{n_2}\liminf_{n_3} E\big(u_{n_1,n_2,n_3}\big).$$
In particular, we have that 
$$\lim_{n_2}\lim_{n_3}u_{n_1,n_2,n_3}= \gamma_{n_1}\,, \qquad \lim_{n_2}\nabla\Big(\lim_{n_3} u_{n_1,n_2,n_3}\Big)= \Gamma_{n_1}\,,\qquad\text{and}\quad \lim_{n_2}\lim_{n_3}\nabla u_{n_1,n_2,n_3}=\Upsilon_{n_1}\,,$$
where $(\gamma_{n_1},\Gamma_{n_1},\Upsilon_{n_1})$ belongs to $SD_{2,p}(\Omega)$, for every $n_1$\,, and satisfies the convergences in \eqref{neededlater}.
Therefore, by taking the infimum over all sequences that satisfy \eqref{neededlater}, we can continue with the chain of inequalities and get 
$$\geq \liminf_{n_1} I_{2,p}(\gamma_{n_1},\Gamma_{n_1},\Upsilon_{n_1})= \liminf_{n_1} \widetilde{I}_{2,p} (\gamma_{n_1},\Gamma_{n_1},\Upsilon_{n_1})\geq \widetilde{I}_{3,p}(g,G_1,G_2,G_3),$$
where the equality holds owing to Proposition~\ref{T003}, and the last inequality is obtained upon taking the infimum over all the sequences $(\gamma_{n_1},\Gamma_{n_1},\Upsilon_{n_1})\in SD_{2,p}(\Omega)$ satisfying \eqref{neededlater} (see \eqref{T003eq}).
Therefore, we have obtained that $I_{3,p}(g,G_1,G_2,G_3)+\delta \geq \widetilde{I}_{3,p}(g,G_1,G_2,G_3)$, and we can conclude thanks to the arbitrariness of~$\delta$.
%
\end{proof}

\bigskip\bigskip\bigskip
\color{black}

\subsection*{Acknowledgements} 
MM and EZ are members of the \emph{Gruppo Nazionale per l'Analisi Matematica, la Probabilit\`{a} e le loro Applicazioni} (GNAMPA) of the \emph{Istituto Nazionale di Alta Matematica} (INdAM). 
The research of ACB was partially supported by National Funding from FCT - Funda\c{c}\~{a}o para a Ci\^{e}ncia e a Tecnologia through the Center for Mathematical Studies, University of Lisbon (CEMS.UL), project UID/04561/2025.
The research of JM was supported by GNAMPA, \emph{Programma Professori Visitatori}, year 2022, by FCT/Portugal through project UIDB/04459/2020 with DOI identifier 10-54499/UIDP/04459/2020 and by the FCT outgoing mobility program 2025.
He also gratefully acknowledges the support and hospitality of Sapienza-University of Rome  through the \emph{Programma Professori Visitatori}, year 2023 and  in May-June 2025.
The work of MM is partially supported by the MIUR project \emph{Dipartimenti di Eccellenza 2018-2022} (CUP E11G18000350001), by the \emph{Starting grant per giovani ricercatori} of Politecnico di Torino, and by the \emph{Geometric-Analytic Methods for PDEs and Applications} (GAMPA) project (CUP E53D23005880006), funded by European Union -- Next Generation EU within the PRIN 2022 program (D.D. 104 - 02/02/2022 Ministero dell'Universit\`{a} e della Ricerca). This manuscript reflects only the authors' views and opinions and the Ministry cannot be considered responsible for them.
EZ acknowledges the support of \emph{Mathematical Modelling of Heterogeneous Systems (MMHS)}, financed by the European Union - Next Generation EU, 
CUP B53D23009360006, Project Code 2022MKB7MM, PNRR M4.C2.1.1.
She also acknowledges partial funding from the GNAMPA Project 2023 \emph{Prospettive nelle scienze dei materiali: modelli variazionali, analisi asintotica e omogeneizzazione}.
 The work of EZ is also supported by Sapienza, University of Rome through
 Progetti di ricerca piccoli 2022: Asymptotic Analysis for Composites, Fractured Materials and with Defects

\bibliographystyle{plain}

\end{document}